\documentclass[a4paper]{amsart}

\usepackage[english]{babel} 

\usepackage{aeguill} 

\usepackage{tikz}
\usepackage{tikz-cd}
\usepackage{amsmath}
\usepackage{amssymb}
\usepackage{mathrsfs}
\usepackage{bm}
\usepackage{mathtools}
\usepackage{pigpen}
\usepackage{picture}
\usepackage{hyperref}
\usepackage{url}
\usepackage{graphicx}
\usepackage{csquotes}
\usepackage{cmll}

\numberwithin{equation}{section}
\newcommand{\normono}{\mathbin{\tikz[baseline] \draw[{Triangle[open,reversed,length=1.5mm,width=1.5mm]}->] (0pt,0.75ex) -- (2.5ex,0.75ex);}}
\newcommand{\norepi}{\mathbin{\tikz[baseline] \draw[-{Triangle[open,length=1.5mm,width=1.5mm]}] (0pt,0.75ex) -- (2.5ex,0.75ex);}}
\newcommand{\pullback}{\text{\pigpenfont A}}
\newcommand{\pushout}{\text{\pigpenfont I}}
\newcommand{\union}{\rotatebox{90}{$\triangleleft$}}
\newcommand{\inter}{\rotatebox{90}{$\triangleright$}}

\newcommand{\nsub}{\mathsf{NSub}}
\newcommand{\mon}{\mathsf{Mon}}
\newcommand{\cmon}{\mathsf{CMon}}
\newcommand{\ses}{\mathsf{SES}}
\newcommand{\nmono}{\mathsf{NMono}}
\newcommand{\nepi}{\mathsf{NEpi}}
\newcommand{\Ker}{\mathrm{ker}}
\newcommand{\Coker}{\mathrm{coker}}
\newcommand{\Def}[1]{\textit{\textbf{#1}}}
\newcommand{\id}{\mathrm{id}}
\newcommand{\map}{-{Straight Barb[length=2mm,width=2mm]}}
\newcommand{\mono}{{Straight Barb[reversed,length=2mm,width=2mm]}-{Straight Barb[length=2mm,width=2mm]}}

\newcommand{\normalmono}{{Triangle[open,reversed,length=2mm,width=2mm]}-{Straight Barb[length=2mm,width=2mm]}}
\newcommand{\normalepi}{-{Triangle[open,length=2mm,width=2mm]}}
\newcommand{\normaldemimono}{{Triangle[open,reversed,length=2mm,width=2mm]}-}
\newcommand{\X}{\mathscr{X}}
\newcommand{\R}{\mathcal{R}}
\newcommand{\noproof}{\hfill$\square$}

\theoremstyle{plain}
\newtheorem{proposition}{Proposition}[subsection]
\newtheorem{corollary}[proposition]{Corollary}
\newtheorem{theorem}[proposition]{Theorem}
\newtheorem{lemma}[proposition]{Lemma}

\theoremstyle{definition}
\newtheorem{definition}[proposition]{Definition}

\theoremstyle{remark}
\newtheorem{remark}[proposition]{Remark}
\newtheorem{example}[proposition]{Example}

\title{Di-Exact Categories and Lattices of Normal Subobjects}

\author{Florent Afsa}

\subjclass[2020]{18E13, 18E99, 06B10, 06B20, 06D99, 06F05}

\keywords{Di-exact category; semi-abelian category; homological self-duality, normal subobject, lattice}

\address{Institut de Recherche en Math\'ematique et Physique, Universit\'e catholique de Louvain, Chemin du Cyclotron 2, 1348 Louvain-la-Neuve, Belgium}

\email{florent.afsa@uclouvain.be}

\begin{document}
	
	\begin{abstract}
		The aim of this article is to study certain categorical-algebraic frameworks for basic homological algebra, introduced in \cite{PVdL3}, with the aim of better understanding the differences between them. We focus on \emph{homological self-duality}, \emph{preservation of normal maps by dinversion} and \emph{diexactness}, finding counterexamples that separate any two of these conditions. 	
		
		On the way, we encounter new examples and new characterizations, such as the \emph{Second Isomorphism Property}. We also consider homological self-duality in the context of regular categories. 
		
		Our main technique is to investigate the \emph{lattice of normal subobjects} of an object in any pointed category with kernels and cokernels. The category of \emph{monoidal semilattices} is a context where we have easy control  of these lattices. We show that properties of the homological frameworks under consideration may be expressed by means of lattice-theoretical properties, such as \emph{modularity} and \emph{distributivity}.
	\end{abstract}
	
	\maketitle
	
	\section{Introduction}
	
	\subsection{Context}
	
	In \cite{PVdL3}, the authors provide a foundation of homological algebra that extends the context of semi-abelian categories. They introduce new axiomatic frameworks that have three properties in common:
	\begin{enumerate}
		\item they are all weaker than the semi-abelian framework;
		\item they are all self-dual;
		\item they correspond to classical homological algebra lemmas.
	\end{enumerate}
	In this article, we are interested in four of those types of categories, which are defined in Section \ref{zexcat}.
	
	The first are \emph{z-exact categories}. They are pointed categories with kernels and cokernels. This is sufficient to have short exact sequences and a version of the Short $5$-Lemma.
	
	The second framework is \emph{homological self-duality}, which has many equivalent characterizations. One of them is the \emph{Pure Snake Lemma}, which is the Snake Lemma with an isomorphism in the middle. Another one is the Third Isomorphism Property.
	
	The third is when \emph{dinversion preserves normal maps} in the category. In this case, a version of the ($3\times 3$)-Lemma holds.
	
	The last, and maybe deepest one, is \emph{di-exactness}. In di-exact categories, the Normal Snake Lemma holds, and di-exactness is linked to semi-abelian categories \cite{Janelidze-Marki-Tholen} because a category with binary coproducts is semi-abelian if and only if it is di-exact and homological in the sense of Borceux and Bourn in \cite{Borceux-Bourn}.
	
	\subsection{In this article}
	
	Each one of these contexts is weaker than the next one, in the order we cited them. In \ref{sescmonhsd}, \ref{cmondpn} and \ref{exvect}, we give examples showing that these implications are strict.
	
	Moreover, we try in \ref{hsdreg} to situate them among other known conditions in the category theory literature, such as regular and normal categories.
	
	These considerations exhibit the efficient role of the lattice of normal subobjects. We show with Lemma \ref{uniinter} that this exists in every z-exact category. Then, in Theorems \ref{dexmod} and \ref{dexses}, we relate lattice theory properties to homological properties. In particular, we show in \ref{hsdses} and \ref{dexses} that homological self-duality and di-exactness in a category $\X$ are preserved when taking the category of short exact sequences of $\X$, once we assume that certain well-known conditions on the lattice of normal subobjects hold, namely modularity and distributivity.
	
	\section{z-Exact categories}\label{zexcat}
	
	In this section, we summarize some notions given in \cite{PVdL3}, needed for our purpose.
	Let $\X$ be a pointed category. We call $\X$ a \Def{z-exact category} \cite[Definition~1.5.1]{PVdL3} if it admits all kernels and all cokernels. Grandis calls it \emph{pointed semiexact} or \emph{p-semiexact} (\cite[1.3.3]{Grandis-HA2}).
	
	\subsection{Short exact sequences}
	
	\begin{definition}\cite[Definitions~1.4.3,~1.4.4]{PVdL3}
		A \Def{normal monomorphism} (respectively a \Def{normal epimorphism}) is a morphism which is a kernel of some morphism (respectively a cokernel of some morphism). We write $f\colon X\normono Y$ (respectively $f\colon X\norepi Y$) when $f$ is a normal monomorphism (respectively a normal epimorphism).
	\end{definition}
	
	\begin{definition}\cite[Definition~1.9.1]{PVdL3}
		A sequence of morphisms $X\overset{k}{\normono} Y\overset{q}{\norepi} Z$ is called a \Def{short exact sequence} if $k$ is a kernel of $q$ and $q$ is a cokernel of $k$.
	\end{definition}
	
	\begin{proposition}\cite[Proposition~1.4.7]{PVdL3}
		In a z-exact category, if $\kappa$ is a normal monomorphism, then $\Ker(\Coker(\kappa))=\kappa$; if $\pi$ is a normal epimorphism, then $\Coker(\Ker(\pi))=\pi$.\noproof
	\end{proposition}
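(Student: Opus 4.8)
The plan is to prove the first statement directly from the universal properties of kernels and cokernels, and then obtain the second one for free by duality, since the opposite of a z-exact category is again z-exact, with cokernels and kernels (hence normal epimorphisms and normal monomorphisms) swapping roles.

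So suppose $\kappa\colon K\to X$ is a normal monomorphism, say $\kappa=\Ker(f)$ for some $f\colon X\to Y$. Write $c=\Coker(\kappa)\colon X\to C$ and $\kappa'=\Ker(c)\colon K'\to X$; the goal is to produce an isomorphism $u\colon K'\to K$ with $\kappa u=\kappa'$. First, $f\kappa=0$, so the universal property of the cokernel gives a unique $g\colon C\to Y$ with $gc=f$. Next, $c\kappa'=0$ by definition of $\kappa'$, hence $f\kappa'=gc\kappa'=0$, and the universal property of $\Ker(f)=\kappa$ yields a unique $u\colon K'\to K$ with $\kappa u=\kappa'$. Conversely, $c\kappa=0$ by definition of $c$, so the universal property of $\Ker(c)=\kappa'$ yields a unique $v\colon K\to K'$ with $\kappa' v=\kappa$.

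It then remains to check that $u$ and $v$ are mutually inverse. Composing, $\kappa uv=\kappa' v=\kappa=\kappa\,\id_K$; since $\kappa$ is a monomorphism — kernels are monic in any pointed category — we get $uv=\id_K$. Symmetrically, $\kappa'vu=\kappa u=\kappa'=\kappa'\,\id_{K'}$, and $\kappa'$ is monic, so $vu=\id_{K'}$. Thus $u$ is an isomorphism with $\kappa u=\kappa'$, which is precisely the assertion that $\Ker(\Coker(\kappa))$ and $\kappa$ represent the same subobject of $X$, i.e.\ $\Ker(\Coker(\kappa))=\kappa$.

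Finally, applying this to $\X^{\mathrm{op}}$ gives $\Coker(\Ker(\pi))=\pi$ for every normal epimorphism $\pi$ of $\X$. No step here is a genuine obstacle: the only points needing a little care are tracking the uniqueness clauses in the universal properties (so that the comparison maps $u$, $v$ are well defined) and invoking that kernels are monic, which is exactly what forces $u$ and $v$ to be inverse to one another.
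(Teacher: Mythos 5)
Your proof is correct. The paper itself gives no proof of this statement (it is quoted from \cite{PVdL3} with the proof omitted), and your argument --- constructing the comparison maps $u$ and $v$ from the universal properties of $\Ker(f)$ and $\Ker(\Coker(\kappa))$, using that both are monomorphisms to conclude $uv$ and $vu$ are identities, and then obtaining the cokernel statement by passing to the opposite category --- is exactly the standard argument one would find in the cited reference.
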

	
	If $\X$ is a z-exact category, then the respective categories $\nmono(\X)$, $\nepi(\X)$ and $\ses(\X)$ of normal monomorphisms, normal epimorphisms and short exact sequences in $\X$ are equivalent \cite[1.10]{PVdL3}. This allows us to understand finite limits and colimits in these three categories by computing them in $\ses(\X)$. Here we focus on kernels and cokernels.
	
	\begin{proposition}\label{kercokerses}\cite[Corollary~1.10.3]{PVdL3}
		Suppose $\X$ is z-exact, and let $f\coloneq(\alpha,\beta,\gamma)$ be a morphism of short exact sequences.
		\[
			\begin{tikzpicture}
				\node(A) at (0,0){$\bullet$};
				\node(B) at (2,0){$\bullet$};
				\node(C) at (0,-2){$\bullet$};
				\node(D) at (2,-2){$\bullet$};
				\node(E) at (4,0){$\bullet$};
				\node(F) at (4,-2){$\bullet$};
				\draw[\normalmono] (A)--(B);
				\draw[\normalmono] (C)--(D);
				\draw[\map] (A)--(C) node[left,midway]{$\alpha$};
				\draw[\map] (B)--(D) node[left,midway]{$\beta$};
				\draw[\normalepi] (B)--(E);
				\draw[\normalepi] (D)--(F);
				\draw[\map] (E)--(F) node[left,midway]{$\gamma$};
			\end{tikzpicture}
		\]
		\begin{itemize}
			\item The kernel of $f$ exists in $\ses(\X)$ and is the short exact sequence $A\normono B\norepi C$ where $A$ is the kernel of $\alpha$, $B$ is the kernel of $\beta$, and $C$ is the cokernel of $A\normono B$.
			\item Dually, the cokernel of $f$ exists in $\ses(\X)$ and is the short exact sequence $A'\normono B'\norepi C'$ where $B'$ is the cokernel of $\beta$, $C'$ is the cokernel of $\gamma$, and $A'$ is the kernel of $B'\norepi C'$.
			\item $f$ is a normal monomorphism if and only if the left-hand square is a pullback of normal monomorphisms.
			\item Dually, $f$ is a normal epimorphism if and only if the right-hand square is a pushout of normal epimorphisms.\noproof
		\end{itemize}
	\end{proposition}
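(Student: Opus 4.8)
\medskip
\noindent\textbf{Proof proposal.}
The plan is to prove the first and third bullets directly and to deduce the second and fourth formally by duality. Write the two short exact sequences as $X_0\overset{k}{\normono}X_1\overset{q}{\norepi}X_2$ and $Y_0\overset{k'}{\normono}Y_1\overset{q'}{\norepi}Y_2$, so that $f=(\alpha,\beta,\gamma)$ means $k'\alpha=\beta k$ and $q'\beta=\gamma q$. For the kernel, set $a=\Ker\alpha\colon A\normono X_0$ and $b=\Ker\beta\colon B\normono X_1$. Since $\beta(ka)=k'(\alpha a)=0$, the morphism $ka$ factors uniquely through $b$, say $ka=bj$ with $j\colon A\to B$. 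The crux of the argument is the identification $j=\Ker(qb)$: indeed $qbj=q(ka)=0$, and conversely, if $qbt=0$ then $bt$ factors through $\Ker q=k$, say $bt=ks$, whence $k'\alpha s=\beta ks=\beta bt=0$ forces $\alpha s=0$ (as $k'$ is monic), so $s=au$ and $bt=kau=bju$, giving $t=ju$ (as $b$ is monic), with $u$ unique because $j$ is monic. In particular $j$ is a normal monomorphism, so with $q''=\Coker j\colon B\norepi C$ the sequence $A\overset{j}{\normono}B\overset{q''}{\norepi}C$ is short exact by \cite[Proposition~1.4.7]{PVdL3}. Because $qbj=0$, the morphism $qb$ factors as $cq''$ for a unique $c\colon C\to X_2$, and $(a,b,c)$ is then a morphism of short exact sequences with $f\circ(a,b,c)=0$: the first two components vanish by the definitions of $a$ and $b$, and $\gamma c=0$ because $\gamma cq''=\gamma qb=q'\beta b=0$ and $q''$ is epic.

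To finish the first bullet I would verify the universal property. Given $g=(\alpha',\beta',\gamma')\colon(Z_0\overset{k_Z}{\normono}Z_1\overset{q_Z}{\norepi}Z_2)\to X_\bullet$ with $f\circ g=0$, factor $\alpha'=a\bar\alpha$ and $\beta'=b\bar\beta$; monicity of $b$ gives $j\bar\alpha=\bar\beta k_Z$, and $q''\bar\beta k_Z=q''j\bar\alpha=0$ lets $q''\bar\beta$ factor through $q_Z$, producing $\bar\gamma$ with $\bar\gamma q_Z=q''\bar\beta$. Then $(\bar\alpha,\bar\beta,\bar\gamma)$ is a morphism $Z_\bullet\to(A\normono B\norepi C)$ whose composite with $(a,b,c)$ is $g$ (its last component is checked against the epimorphism $q_Z$), and it is the unique such because $a$ and $b$ are monic and $q_Z$ is epic. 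The cokernel is then obtained dually, with $B'=\Coker\beta$, $C'=\Coker\gamma$, and $A'=\Ker(B'\norepi C')$.

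For the third bullet, first suppose $f$ is a normal monomorphism, say $f=\Ker(g)$ with $g\colon Y_\bullet\to(W_0\overset{k_W}{\normono}W_1\norepi W_2)$. By the first bullet I may identify $\alpha$ with $\Ker(g_0)$ and $\beta$ with $\Ker(g_1)$, both normal monomorphisms; then a diagram chase---using that $g$ is a morphism of short exact sequences, so $g_1k'=k_Wg_0$ with $k_W$ monic---shows that the left-hand square is a pullback. Conversely, assume the left-hand square is a pullback and $\alpha$, $\beta$ are normal monomorphisms. Compute $\Coker f$ by the dual of the first bullet, and then $\Ker(\Coker f)$ by the first bullet: its middle object is the domain of $\Ker(\Coker\beta)$, which is $X_1$ because $\beta$ is a normal monomorphism, and its sub-object is the kernel of the induced map $Y_0\to W_0$, which unwinds to the pullback of $k'$ and $\beta$, namely $X_0$ by hypothesis. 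Hence the canonical comparison $X_\bullet\to\Ker(\Coker f)$ is the identity on sub-objects and on middle objects, so also on quotients (these being cokernels of the sub-inclusions), and therefore an isomorphism; thus $f=\Ker(\Coker f)$ is a normal monomorphism. The fourth bullet is dual.

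I do not expect a deep obstacle: every step above is diagram chasing valid in any pointed category with kernels and cokernels, and all the limits and colimits invoked are produced along the way, so z-exactness alone suffices. The two places that need a little care are the identification $j=\Ker(qb)$---this is exactly what makes $A\normono B$ a normal monomorphism, hence what makes $A\normono B\norepi C$ a genuine short exact sequence---and, in the third bullet, recognising the relevant kernels as the pullback $X_0=Y_0\times_{Y_1}X_1$.
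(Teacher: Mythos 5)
Your argument is correct. Note, however, that the paper itself gives no proof of this statement: it is imported verbatim from \cite[Corollary~1.10.3]{PVdL3} and marked as proved elsewhere, so there is no in-paper argument to compare yours against. What you have written is a self-contained direct verification, and it holds up: the key step identifying $j$ with $\Ker(qb)$ (so that $A\normono B$ really is a normal monomorphism and $A\normono B\norepi C$ a genuine short exact sequence via \cite[Proposition~1.4.7]{PVdL3}) is exactly the point where the construction could have failed, and your diagram chase there is sound; the universal property, the recognition of the pullback in the forward direction of the third bullet (which is essentially Lemma~\ref{pullbackreco} of the paper), and the converse via computing $\Ker(\Coker f)$ componentwise all check out, using only kernels, cokernels and pointedness. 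The only stylistic difference from the source material is that you reprove in passing facts the paper keeps as separate lemmas (Lemmas~\ref{pullbacknoyau} and~\ref{pullbackreco}); citing those would shorten your third bullet, but nothing is missing.
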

	
	Hence, $\ses(\X)$ is z-exact whenever $\X$ is. Given a normal monomorphism $X\normono Y$, we write $Y/X$ for its cokernel. In particular, we have a short exact sequence $X\normono Y\norepi Y/X$.
	
	\begin{definition}\label{totnorseq}\cite[Definition~2.5.1]{PVdL3}
		A \Def{totally normal sequence of monomorphisms} is a sequence of monomorphisms $X\normono Y\normono Z$ such that the composite $X\rightarrow Z$ is also a normal monomorphism.
	\end{definition}
	
	\subsection{Di-extensions}
	
	\begin{definition}\cite[Definition~1.8.1]{PVdL3}
		A map $f$ is \Def{normal} (or \Def{proper}) if it factors as $f=me$ where $m$ is a normal monomorphism and $e$ is a normal epimorphism. We call the pair $(m,e)$ a \Def{normal decomposition} of $f$. Whenever such a factorization exists, it is unique up to unique isomorphism. Then $m$ is a kernel of the cokernel of $f$ and $e$ is a cokernel of the kernel of $f$.
	\end{definition}
	
	\begin{definition}\cite[Definition~2.1.2]{PVdL3}
		A map $f$ is \Def{antinormal} if it factors as $f=em$ where e is a normal epimorphism and $m$ is a normal monomorphism. We call the pair $(e,m)$ an \Def{antinormal decomposition} of $f$. This time, such a factorization need not exist, neither be unique.
	\end{definition}
	
	\begin{definition}\cite[Definition~2.1.1]{PVdL3}
		A \Def{$(3\times 3)$-diagram} in $\X$ is a commutative diagram of the form
		\[
			\begin{tikzpicture}
				\node(A) at (0,0){$\bullet$};
				\node(B) at (2,0){$\bullet$};
				\node(C) at (4,0){$\bullet$};
				\node(D) at (0,-2){$\bullet$};
				\node(E) at (2,-2){$\bullet$};
				\node(F) at (4,-2){$\bullet$};
				\node(G) at (0,-4){$\bullet$};
				\node(H) at (2,-4){$\bullet$};
				\node(I) at (4,-4){$\bullet$};
				\draw[\map] (A)--(B);
				\draw[\map] (B)--(C);
				\draw[\map] (A)--(D);
				\draw[\map] (B)--(E);
				\draw[\map] (C)--(F);
				\draw[\map] (D)--(E);
				\draw[\map] (E)--(F);
				\draw[\map] (D)--(G);
				\draw[\map] (E)--(H);
				\draw[\map] (F)--(I);
				\draw[\map] (G)--(H);
				\draw[\map] (H)--(I);
			\end{tikzpicture}
		\]
		We call it a \Def{di-extension} if every row and every column is a short exact sequence.
	\end{definition}
	
	Now we explain how an antinormal map can yield a di-extension (\cite[Lemmas~2.1.3,~2.1.5]{PVdL3}). Suppose we have an antinormal map $\alpha=em$, and take $\mu$ a kernel of $e$ and $\varepsilon$ a cokernel of $m$. This yields an other antinormal map $\beta=\varepsilon\mu$, called a \Def{dinverse} of $\alpha$ (for \emph{di-inverse}). We construct the following diagram by taking the pullback of $m$ and $\mu$ in the top left and the pushout of $e$ and $\varepsilon$ in the bottom right. These exist via Lemma \ref{pullbacknoyau} below.
	
	\begin{equation}\label{diex}
		\begin{tikzpicture}[baseline=(N)]
			\node(A) at (0,0){$\bullet$};
			\node(B) at (2,0){$\bullet$};
			\node(C) at (4,0){};
			\node(D) at (0,-2){$\bullet$};
			\node(E) at (2,-2){$\bullet$};
			\node(F) at (4,-2){$\bullet$};
			\node(G) at (0,-4){};
			\node(H) at (2,-4){$\bullet$};
			\node(I) at (4,-4){$\bullet$};
			\node(L) at (0.5,-0.5){\pullback};
			\node(M) at (3.5,-3.5){\pushout};
			\node(N) at (-1,-2.125){};
			\draw[\normalmono] (A)--(B) node[above,midway]{$q$};
			\draw[\normalmono] (A)--(D) node[left,midway]{$p$};
			\draw[\normalmono] (B)--(E) node[right,midway]{$\mu$};
			\draw[\normalmono] (D)--(E) node[below,midway]{$m$};
			\draw[\normalepi] (E)--(F) node[above,midway]{$\varepsilon$};
			\draw[\normalepi] (E)--(H) node[left,midway]{$e$};
			\draw[\normalepi] (F)--(I) node[right,midway]{$\rho$};
			\draw[\normalepi] (H)--(I) node[below,midway]{$\pi$};
			\draw[\map,dashed] (B)--(F) node[above right,midway]{$\beta$};
			\draw[\map,dashed] (D)--(H) node[below left,midway]{$\alpha$};
		\end{tikzpicture}
	\end{equation}
	We can indeed show with Lemma \ref{pullbacknoyau} that $p=\Ker(\alpha)$, $q=\Ker(\beta)$, $\pi=\Coker(\alpha)$ and $\rho=\Coker(\beta)$. Thus, whenever $\alpha$ is a normal map, a normal factorization would be a cokernel of $p$ followed by a kernel of $\pi$. Similarly, whenever $\beta$ is a normal map, a normal factorization would be a cokernel of $q$ followed by a kernel of $\rho$. Hence, if $\alpha$ and $\beta$ are both normal, then we obtain a di-extension by adding normal factorizations of $\alpha$ and $\beta$ respectively in the bottom left corner and in the top right corner. The following definitions are contexts in which an antinormal map can generate a di-extension.
	
	\begin{definition}\cite[Proposition~2.5.7.(I)]{PVdL3}
		A z-exact category $\X$ is called \Def{homologically self-dual} when in the context of a diagram such as \eqref{diex}, if $\alpha$ is $0$, then $\beta$ is normal. We can formulate this as ``every antinormal decomposition of $0$ generates a di-extension''. We will sometime use the acronym HSD.
	\end{definition}
	
	Homological self-duality has many characterizations, such as the Pure Snake Lemma \cite[Proposition~2.5.7.(II)]{PVdL3}, but one that we are most interested in is the \textit{Third Isomorphism Property} \cite[Proposition~2.5.7.(III)]{PVdL3}: for every totally normal sequence of monomorphisms $X\normono Y\normono Z$ (Definition \ref{totnorseq}), the sequence $Y/X\rightarrow Z/X\rightarrow Z/Y$ is short exact. Since $Z/X\rightarrow Z/Y$ is always a normal epimorphism, this can be reformulated by saying that $Y/X\rightarrow Z/X$ is a normal monomorphism, or that $Y/X$ is a kernel of $Z/X\rightarrow Z/Y$.
	
	\begin{definition}\cite[Proposition~2.6.3.(I)]{PVdL3}
		A z-exact category $\X$ is called \Def{DPN} (for \Def{dinversion preserves normal maps}) whenever in the context of a diagram such as \eqref{diex}, the morphism $\alpha$ is normal if and only if $\beta$ is normal. We can formulate this as ``every antinormal map that is also normal generates a di-extension''.
	\end{definition}
	
	The preservation of normal maps by dinversion is the border case of the ($3\times3$)-lemma: in a ($3\times3$)-diagram, if three rows and two columns including the middle one are short exact sequences, then the third column is a short exact sequence \cite[Proposition~2.5.7.(II)-(V)]{PVdL3}.
	
	\begin{definition}\cite[Proposition~2.8.2.(II)]{PVdL3}
		A z-exact category $\X$ is called \Def{di-exact} if every antinormal map is normal. We can formulate this as ``every antinormal pair generates a di-extension''.
	\end{definition}
	
	It is clear that any di-exact category is DPN, and any DPN category is HSD. Besides, every semi-abelian category is di-exact. In fact, every homological category with binary coproducts is semi-abelian if and only if it is di-exact \cite[Theorem~5.7.3]{PVdL3}.
	
	\subsection{Some well-known lemmas}
	
	The following lemmas are well known; similar statements can be found, for instance, in~\cite{Borceux-Bourn}. In~\cite{PVdL3}, an explicit proof is given for each of these results.
	
	\begin{lemma}\label{normonocomp}\cite[Propositions~1.5.8.(iv),~1.5.9.(iii)]{PVdL3}
		Let $u$, $v$ be two composable morphisms. If $vu$ is a normal monomorphism and $v$ is  monic, then $u$ is a normal monomorphism. If $vu$ is a normal epimorphism and $u$ is  epic, then $v$ is a normal epimorphism.\noproof
	\end{lemma}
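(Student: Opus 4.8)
The plan is to prove both statements by a direct argument with universal properties, the second being the exact dual of the first, so no new ideas are needed once the first is done.

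For the first statement, write $u\colon A\to B$ and $v\colon B\to C$, and suppose $vu$ is a kernel of some morphism $f\colon C\to D$. I claim that $u$ is a kernel of $fv\colon B\to D$, which will exhibit $u$ as a normal monomorphism. First, $fv\cdot u=f\cdot vu=0$. Next, given any $h\colon T\to B$ with $fv\cdot h=0$, the composite $vh\colon T\to C$ satisfies $f\cdot vh=0$, so by the universal property of the kernel $vu$ there is a unique $k\colon T\to A$ with $vu\cdot k=vh$; since $v$ is monic this gives $uk=h$. For uniqueness, note that $vu$ is a normal monomorphism, hence monic, so any $k'$ with $uk'=h$ satisfies $vu\cdot k'=vh=vu\cdot k$ and therefore $k'=k$. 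Hence $u=\Ker(fv)$.

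For the second statement, I dualize: write $u\colon A\to B$, $v\colon B\to C$ with $u$ epic, and suppose $vu$ is a cokernel of some $g\colon D\to A$. Then $v$ is a cokernel of $ug\colon D\to B$. Indeed $v\cdot ug=vu\cdot g=0$, and for any $h\colon B\to T$ with $h\cdot ug=0$ the composite $hu$ satisfies $hu\cdot g=0$, so there is a unique $k\colon C\to T$ with $k\cdot vu=hu$; since $u$ is epic this yields $kv=h$, and uniqueness follows because $vu$, being a normal epimorphism, is epic. Hence $v=\Coker(ug)$.

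I do not expect any genuine obstacle here: the argument never requires the existence of further kernels or cokernels beyond the ones given. The only point to keep in mind is that a normal monomorphism is in particular monic, and dually a normal epimorphism is epic, which is precisely what makes the relevant uniqueness arguments go through.
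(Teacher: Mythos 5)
Your proof is correct: showing that $u=\Ker(fv)$ when $vu=\Ker(f)$ and $v$ is monic, and dually that $v=\Coker(ug)$, is exactly the standard argument, and all the universal-property verifications (including the use of monicity of $vu$ for uniqueness) check out. The paper itself states this lemma without proof, citing \cite{PVdL3}, and your argument is the one given there, so there is nothing further to compare.
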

	
	\begin{lemma}\label{pullbacknormono}\cite[Proposition~1.5.4.(i),~1.7.1]{PVdL3}
		Any pullback of a normal monomorphism is a normal monomorphism. If $\X$ admits cokernels, any pullback of two normal monomorphisms yields a commutative square where the diagonal composite is a normal monomorphism.\noproof
	\end{lemma}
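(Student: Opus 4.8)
The plan is to treat the two assertions separately: the first is the classical fact that a pullback of a kernel is a kernel, and the second is a short diagram chase built on top of it. For the first, write the given normal monomorphism as $\kappa=\Ker(f)$ for some $f\colon Y\to Z$, and let $g\colon Y'\to Y$ be arbitrary. I claim that $\Ker(fg)\colon P\to Y'$ is a pullback of $\kappa$ along $g$: indeed $fg\cdot\Ker(fg)=0$, so $g\cdot\Ker(fg)$ factors uniquely through $\kappa=\Ker(f)$, producing a commutative square over $g$ and $\kappa$; and given any pair $a\colon W\to X$, $b\colon W\to Y'$ with $\kappa a=gb$ one has $fgb=f\kappa a=0$, so $b$ factors uniquely (as $\Ker(fg)$ is monic) through $\Ker(fg)$, while cancelling the monomorphism $\kappa$ recovers the $X$-component. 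Hence the pullback exists, its projection to $Y'$ equals the normal monomorphism $\Ker(fg)$, and any other pullback of $\kappa$ along $g$ is isomorphic to it. This step uses only the existence of kernels.

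For the second assertion, let $k_1=\Ker(f_1)\colon X_1\normono Y$ and $k_2=\Ker(f_2)\colon X_2\normono Y$, and form the pullback $P=X_1\times_Y X_2$, which exists by the construction just given; by the first part its projections $p_1\colon P\to X_1$ and $p_2\colon P\to X_2$ are again normal monomorphisms. Denote by $d$ the composite $k_1p_1=k_2p_2\colon P\to Y$. Since $\X$ admits cokernels we may form $c=\Coker(d)\colon Y\to Q$ and then $n=\Ker(c)$, which is a normal monomorphism $N\normono Y$; as $cd=0$ there is a necessarily monic morphism $\lambda$ with $n\lambda=d$. It remains to produce a morphism $N\to P$ in the other direction, and the point is that $c=\Coker(d)$ absorbs both $f_1$ and $f_2$. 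Indeed $f_1d=f_1k_1p_1=0$, so $f_1$ factors through $c$ as $f_1=g_1c$, whence $f_1n=g_1cn=0$; therefore $n$ factors through $\Ker(f_1)=k_1$, say $n=k_1\nu_1$, and symmetrically $n=k_2\nu_2$. From $k_1\nu_1=n=k_2\nu_2$ and the universal property of $P$ we obtain $\mu\colon N\to P$ with $p_i\mu=\nu_i$, hence $d\mu=k_1p_1\mu=k_1\nu_1=n$. Now $n\lambda=d$ together with $d\mu=n$ gives $n\lambda\mu=n$ and $d\mu\lambda=d$; cancelling the monomorphisms $n$ and $d$ shows $\lambda$ and $\mu$ to be mutually inverse, so $d=n\lambda$ with $\lambda$ an isomorphism. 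Therefore $d$ is a kernel of $c$, i.e.\ a normal monomorphism, as claimed.

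I expect the only delicate point to be conceptual rather than computational: one should resist trying to realize $d$ directly as the kernel of a single morphism out of $Y$ --- the obvious candidate $Y\to(Y/X_1)\times(Y/X_2)$ is unavailable because $\X$ need not have products --- and instead take $n=\Ker(\Coker(d))$, which is a normal monomorphism for free, and then show that $n$ is no larger than $d$. The mechanism forcing this is precisely that $f_1$ and $f_2$ both vanish on $d$ and hence factor through $\Coker(d)$, which confines $N$ to both $X_1$ and $X_2$ and therefore to their pullback $P$; everything else is formal manipulation of universal properties.
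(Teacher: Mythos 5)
Your proof is correct. The paper states this lemma without proof, citing \cite{PVdL3}, so there is no in-text argument to compare against; your two-step argument is the standard one and is complete. Your first step (the pullback of $\Ker(f)$ along $g$ is $\Ker(fg)$) is essentially Lemma \ref{pullbacknoyau} of the paper, and your second step --- taking $n=\Ker(\Coker(d))$, showing that $f_1$ and $f_2$ factor through $\Coker(d)$ so that $n$ lands in both $X_1$ and $X_2$ and hence in the pullback, and then cancelling the monomorphisms $n$ and $d$ --- is exactly the mechanism one expects; all the universal-property manipulations check out.
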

	
	\begin{lemma}\label{pullbacknoyau}\cite[Proposition~1.5.8.(i),~1.5.9.(i)]{PVdL3}
		Given two composable morphisms $f$ and $g$, the kernel of $gf$ is the pullback of the kernel of $g$ along $f$. Dually, the cokernel of $gf$ is the pushout of the cokernel of $f$ along $g$.\noproof
	\end{lemma}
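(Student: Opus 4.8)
The plan is to prove the first assertion directly from the universal property of kernels, then deduce the second by passing to the opposite category. Fix $f\colon X\to Y$ and $g\colon Y\to Z$, and write $k\coloneq\Ker(g)\colon K\to Y$ and $\ell\coloneq\Ker(gf)\colon L\to X$; both exist since $\X$ is z-exact. First I would build the comparison morphism: from $g(f\ell)=(gf)\ell=0$ and the universal property of $k$, the map $f\ell$ factors uniquely as $f\ell=kt$ for some $t\colon L\to K$, which gives a commutative square with edges $t$ (top), $\ell$ (left), $k$ (right), $f$ (bottom).

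Next I would check that this square is a pullback by testing it against an arbitrary cone. Given $a\colon W\to X$ and $b\colon W\to K$ with $fa=kb$, one computes $(gf)a=g(fa)=g(kb)=0$, so $a$ factors uniquely through $\ell$, say $a=\ell c$; then $k(tc)=(f\ell)c=fa=kb$, and since $k$ is monic (being a kernel) we get $tc=b$, while the monicity of $\ell$ forces $c$ to be the unique such morphism. Hence $(L,\ell,t)$ is exactly the pullback of $k$ along $f$, i.e. $\Ker(gf)$ is the pullback of $\Ker(g)$ along $f$. An equivalent route is to paste the square above onto the kernel square of $g$ (with corners $K,0,Y,Z$) and observe that the resulting outer rectangle is the kernel square of $gf$, so that the pullback-cancellation property yields the claim.

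Finally, the dual statement about cokernels and pushouts is obtained by applying the first part in $\X^{\mathrm{op}}$, which is again z-exact, using that $\Ker$, normal monomorphisms, and pullbacks in $\X^{\mathrm{op}}$ correspond respectively to $\Coker$, normal epimorphisms, and pushouts in $\X$.

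I do not expect a genuine obstacle here: the argument is entirely formal. The only points requiring a little care are keeping track of which of $k$ and $\ell$ is being cancelled (both are monic as kernels), and, if one uses the pasting variant, verifying that the composite of the two zero morphisms in the pasted diagram is indeed the zero morphism $0\colon 0\to Z$, so that the outer rectangle really is the defining pullback of $\Ker(gf)$ rather than some other limit.
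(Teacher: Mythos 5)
The paper imports this lemma from \cite{PVdL3} without reproducing a proof, so there is no in-paper argument to compare against. Your proposal is correct: the construction of the comparison morphism $t$ via the universal property of $\Ker(g)$, the direct verification of the pullback universal property using that $k$ and $\ell$ are monic, and the dualization through $\X^{\mathrm{op}}$ are exactly the standard argument one would expect (and the pasting/cancellation variant you mention is an equally valid shortcut).
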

	
	\begin{lemma}\label{pullbackiso}\cite[Proposition~1.6.1]{PVdL3}
		For a commutative diagram
		\[
			\begin{tikzpicture}
				\node(A) at (0,0){$\bullet$};
				\node(B) at (2,0){$\bullet$};
				\node(C) at (0,-2){$\bullet$};
				\node(D) at (2,-2){$\bullet$};
				\node(E) at (4,0){$\bullet$};
				\node(F) at (4,-2){$\bullet$};
				\draw[\normalmono] (A)--(B) node[above,midway]{$\mathrm{ker}(f)$};
				\draw[\normalmono] (C)--(D) node[below,midway]{$\mathrm{ker}(g)$};
				\draw[\map] (A)--(C) node[left,midway]{$\gamma$};
				\draw[\map] (B)--(D) node[left,midway]{$u$};
				\draw[\map] (B)--(E) node[above,midway]{$f$};
				\draw[\map] (D)--(F) node[below,midway]{$g$};
				\draw[\map] (E)--(F) node[left,midway]{$v$};
			\end{tikzpicture}
		\]
		if the right-hand square is a pullback, then $\gamma$ is an isomorphism.\noproof
	\end{lemma}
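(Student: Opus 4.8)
The plan is to construct an explicit inverse for $\gamma$ directly from the universal property of the right-hand pullback. Write $k \coloneqq \Ker(f)$ for the top-left map $A\to B$ and $\ell \coloneqq \Ker(g)$ for the bottom-left map $C\to D$; both are monomorphisms, and commutativity of the left-hand square says $uk = \ell\gamma$. By hypothesis, $fk = 0$ and $g\ell = 0$, and the right-hand square is a pullback with legs $f\colon B\to E$ and $u\colon B\to D$ over the cospan $E\xrightarrow{v}F\xleftarrow{g}D$.

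First I would build a map $C\to A$ in two stages. Since $g\ell = 0 = v\cdot 0$, the pair $(0\colon C\to E,\ \ell\colon C\to D)$ is a cone on the cospan $E\xrightarrow{v}F\xleftarrow{g}D$, so the pullback property yields a unique $\varphi\colon C\to B$ with $f\varphi = 0$ and $u\varphi = \ell$. As $f\varphi = 0$, the map $\varphi$ factors uniquely through the kernel $k=\Ker(f)$, say $\varphi = k\psi$ with $\psi\colon C\to A$.

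Then I would check that $\psi$ is a two-sided inverse of $\gamma$. On the one hand, $\ell(\gamma\psi) = (uk)\psi = u\varphi = \ell$, and since $\ell$ is monic, $\gamma\psi = \id_C$. On the other hand, consider $k(\psi\gamma) = \varphi\gamma\colon A\to B$: it satisfies $f(\varphi\gamma) = 0 = fk$ and $u(\varphi\gamma) = \ell\gamma = uk$, so $\varphi\gamma$ and $k$ induce the same cone on $E\xrightarrow{v}F\xleftarrow{g}D$; by the \emph{uniqueness} clause of the pullback property, $\varphi\gamma = k$, and cancelling the monomorphism $k$ gives $\psi\gamma = \id_A$. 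Hence $\gamma$ is an isomorphism.

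I do not anticipate a genuine obstacle: this is a routine diagram chase once the right cone is identified. The only points needing care are keeping track of which composites are forced to vanish (so that the factorizations through the kernels exist) and, in the step $\psi\gamma = \id_A$, invoking the uniqueness part of the pullback property rather than merely its existence part. It is worth emphasising that the left-hand square need \emph{not} itself be a pullback in general (it is exactly when $v$ is monic), so one really must produce $\psi$ by hand rather than appeal to a pasting lemma.
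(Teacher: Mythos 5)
Your argument is correct and complete: the paper itself omits the proof of this lemma (it is imported from \cite[Proposition~1.6.1]{PVdL3}), and your construction of a two-sided inverse $\psi$ for $\gamma$ via the existence and uniqueness clauses of the pullback is exactly the standard argument --- equivalently, one may paste the kernel square of $f$ onto the given pullback square to identify $A$ with $\mathrm{ker}(g)$. One small caveat on your closing aside: the left-hand square is a pullback precisely when $\mathrm{ker}(vf)=\mathrm{ker}(f)$, which is implied by $v$ being a monomorphism but is not in general equivalent to it; this does not affect your proof.
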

	
	\begin{lemma}\label{pullbackreco}\cite[Proposition~1.6.2]{PVdL3}
		Let $q\colon Y\rightarrow Z$, $q\colon Y'\rightarrow Z'$ be two morphisms, $k=\Ker(q)\colon X\normono Y$ and $k'\colon X'\rightarrowtail Z'$ a monomorphism such that $q'k'=0$, as written in the following diagram
		\[
			\begin{tikzpicture}
				\node(A) at (0,0){$X$};
				\node(B) at (2,0){$Y$};
				\node(C) at (0,-2){$X'$};
				\node(D) at (2,-2){$Y'$};
				\node(E) at (4,0){$Z$};
				\node(F) at (4,-2){$Z'$};
				\draw[\normalmono] (A)--(B) node[above,midway]{$k$};
				\draw[\mono] (C)--(D) node[below,midway]{$k'$};
				\draw[\map] (A)--(C) node[left,midway]{$u$};
				\draw[\map] (B)--(D) node[left,midway]{$v$};
				\draw[\map] (B)--(E) node[above,midway]{$q$};
				\draw[\map] (D)--(F) node[below,midway]{$q'$};
				\draw[\map] (E)--(F) node[left,midway]{$p$};
			\end{tikzpicture}
		\]
		If $p$ is a monomorphism, then the left-hand square is a pullback.\noproof
	\end{lemma}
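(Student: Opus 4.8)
The plan is to verify the universal property of the pullback of $k'$ along $v$ directly, rather than first constructing the pullback and then comparing. So I take a test object $T$ together with morphisms $a\colon T\to X'$ and $b\colon T\to Y$ satisfying $k'a=vb$, and I must produce a unique $c\colon T\to X$ with $kc=b$ and $uc=a$.

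The first step is to establish $qb=0$. Postcomposing $qb$ with $p$ and using commutativity of the right-hand square together with the hypothesis $q'k'=0$ gives $pqb=q'vb=q'k'a=0$; since $p$ is a monomorphism, $qb=0$ follows. As $k=\Ker(q)$, the morphism $b$ then factors through $k$, say $b=kc$ for some $c\colon T\to X$, and $c$ is unique with the property $kc=b$ because $k$, being a kernel, is monic. The second step is to check the remaining projection equality $uc=a$: from commutativity of the left-hand square we get $k'uc=vkc=vb=k'a$, and since $k'$ is monic, $uc=a$. This yields both projection identities and the required uniqueness, so the left-hand square is a pullback.

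There is essentially no obstacle here. The only points worth flagging are that $p$ monic is used precisely to pass from $pqb=0$ to $qb=0$ — which is exactly why that hypothesis appears — and that $k'$ monic is needed only to recover the second coordinate $a$ of the induced arrow; the rest is formal manipulation with the defining property of the kernel $k$. One could alternatively phrase the argument by forming the genuine pullback of $k'$ along $v$ and showing the comparison map from $X$ is an isomorphism, but verifying the universal property in place is shorter and avoids assuming that pullback exists.
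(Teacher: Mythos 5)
Your proof is correct: the paper states this lemma as a cited result with no proof included, and your direct verification of the universal property --- using $p$ monic to deduce $qb=0$, the kernel property of $k$ to obtain the factorization, and $k'$ monic to recover the second projection identity --- is exactly the standard argument one expects for this statement. Nothing to add.
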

	
	\section{Some algebraic preliminaries}
	
	\subsection{Monoids}
	
	Here we present a few results about monoids that will be useful next. They are explicit characterizations of normal monomorphisms and normal epimorphisms in the respective categories $\mon$ and $\cmon$ of monoids and commutative monoids. Since a morphism in $\mon$ is a monomorphism precisely when it is an injective map, we can view a monomorphism as an inclusion of a submonoid. Furthermore, the kernel of a map $f\colon M\rightarrow N$ is the inverse image of the neutral element of $N$.
	
	\begin{proposition} \cite{Pin}
		Let $K$ be a submonoid of a monoid $M$. Then $K$ is normal if and only if for all $k\in K$ and $x$, $y$ $\in M$, $xky\in K\Longleftrightarrow xy\in K$.
	\end{proposition}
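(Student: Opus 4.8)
The plan is to prove the two implications separately, the forward one being essentially a restatement of definitions. If $K$ is normal, then by definition the inclusion $K\hookrightarrow M$ is a kernel, say $K=\Ker(f)$ for a monoid homomorphism $f\colon M\to N$; as recalled just above, this means $K=f^{-1}(e_N)$. For $k\in K$ and $x,y\in M$ we then compute $f(xky)=f(x)f(k)f(y)=f(x)\,e_N\,f(y)=f(xy)$, whence $xky\in K$ if and only if $f(xky)=e_N$ if and only if $f(xy)=e_N$ if and only if $xy\in K$.

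For the converse, assume $K$ satisfies the stated closure condition, and exhibit a monoid homomorphism out of $M$ whose kernel is $K$. I would take $N$ to be the quotient $M/\theta$, where $\theta$ is the smallest congruence on $M$ containing the relation $R=\{(k,e)\mid k\in K\}$, with $e$ the unit of $M$, and let $f\colon M\to N$ be the canonical projection. Then $f$ is a monoid homomorphism and $\Ker(f)=f^{-1}(e_N)$ is exactly the $\theta$-class $[e]_\theta$ of $e$. By the standard explicit description of a congruence generated by a relation, $[e]_\theta$ consists of those $m\in M$ for which there is a finite chain $m=z_0,z_1,\dots,z_n=e$ such that, for each $i$, either $z_i=uv$ and $z_{i+1}=ukv$, or $z_i=ukv$ and $z_{i+1}=uv$, for some $u,v\in M$ and some $k\in K$.

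It then remains to identify $[e]_\theta$ with $K$, which is the only step requiring genuine attention. The inclusion $K\subseteq[e]_\theta$ is immediate: every $k\in K$ can be written as $k=e\cdot k\cdot e$ and, by deleting the factor $k$, is $\theta$-related to $e$. For the reverse inclusion, the key point is that the closure hypothesis says precisely that each elementary step of such a chain preserves membership in $K$: in both the deletion step $ukv\to uv$ and the insertion step $uv\to ukv$ one has $uv\in K$ if and only if $ukv\in K$, by the hypothesis applied with $x=u$ and $y=v$. Hence, starting from $z_n=e\in K$ and propagating this equivalence backwards along the chain, we obtain $z_{n-1},\dots,z_0\in K$, so $m\in K$. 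The main obstacle, which is rather mild, is therefore bookkeeping: checking that the explicit form of the congruence generated by $R$ is the correct one, and that each single step of a chain really is an instance of the hypothesis with the quantifiers exactly as stated. Once this is in place, $K=[e]_\theta=\Ker(f)$ shows that $K$ is normal, completing the proof.
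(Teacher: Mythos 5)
Your proof is correct, but the converse is argued by a genuinely different construction than the paper's. The paper uses the \emph{syntactic congruence} of $K$: it declares $m\sim n$ when $xmy\in K\Leftrightarrow xny\in K$ for all $x,y$, checks directly that this is a congruence, and observes that the hypothesis makes the class of $1$ equal to $K$; this is the \emph{coarsest} congruence saturating $K$, and the argument is entirely self-contained (no appeal to generated congruences). You instead take the \emph{finest} congruence $\theta$ identifying $K$ with $\{e\}$, and use the explicit chain description of a generated congruence (elementary steps $uv\leftrightarrow ukv$) to show that the hypothesis prevents $[e]_\theta$ from absorbing anything outside $K$. Both routes are valid: your chain description is the standard one, since the translations of a monoid are exactly the maps $x\mapsto uxv$, and your backward induction along the chain (starting from $z_n=e\in K$, which holds because $K$ is a submonoid) correctly uses both directions of the hypothesis. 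The trade-off is that your argument imports the Mal'cev-style description of generated congruences as a black box, whereas the paper's congruence is defined by a single formula and verified in two lines; on the other hand, your construction makes transparent that the hypothesis is exactly the condition needed for the universal map collapsing $K$ to a point to have kernel precisely $K$.
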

	\begin{proof} Suppose $K$ is the kernel of some morphism $f$.
		For $k\in K$ and $x$, $y$ $\in M$, we have
		$$
			f(xky)=f(x)f(k)f(y)=f(x)f(y)=f(xy)
		$$
		so the condition holds.
		Conversely, we define a morphism that has kernel $K$ by taking the projection from $M$ to $M/\R$ where $\R$ is the equivalence relation defined for $m$, $n$ $\in M$ by $m\R n$ if and only if
		$$
			\forall x\textrm{, }y\in M\quad xmy\in K\Longleftrightarrow xny\in K
		$$
		Given $x$, $y$, $m$, $n$, $m'$, $n'$ $\in M$ such that $m\R m'$ and $n\R n'$, we have
		$$
			xmny\in K\Longleftrightarrow xmn'y\in K\Longleftrightarrow xm'n'y\in K
		$$
		so composition in $M$ is compatible with $\R$, hence $M/\R$ is a monoid and the projection is a morphism.
		Finally, we notice that the equivalence class of $1$ is exactly $K$, thus $K$ is the kernel of the projection.
	\end{proof}
	
	\begin{remark}\label{normonoid}
		In the case of commutative monoids, the condition changes as follows:
		$$
			\forall k\in K\quad \forall x\in M\quad x+k\in K\Longleftrightarrow x\in K
		$$
		Notice that the right-to-left implication is just the property of stability of the operation $+$.
	\end{remark}
	
	\begin{proposition}\label{cokercmon}
		For a normal submonoid $K$ of a commutative monoid $M$, the cokernel of the inclusion is the quotient $M/\R$ where $\R$ is the equivalence relation defined for $m$, $n$ $\in M$ by $m\R n$ if and only if there exist $k$, $l$ $\in K$ such that $m+k=n+l$. We write this quotient $M/K$.
	\end{proposition}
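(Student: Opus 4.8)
The plan is to verify directly that the projection $q\colon M\to M/\R$ is the universal morphism out of $M$ that annihilates $K$; that is, I will check that $\R$ is a monoid congruence whose class of $0$ contains $K$, and that any homomorphism killing $K$ factors uniquely through $q$.

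First I would confirm that $\R$ is an equivalence relation. Reflexivity follows by taking $k=l=0$, and symmetry is built into the definition. For transitivity, if $m+k=n+l$ and $n+k'=p+l'$ with $k,l,k',l'\in K$, then adding the two equations and using commutativity of $M$ gives $m+(k+k')=p+(l'+l)$, with $k+k'$ and $l'+l$ in $K$ since $K$ is a submonoid; hence $m\,\R\,p$.

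Next I would show that $\R$ is compatible with the addition of $M$: from $m+k=m'+l$ we get $(m+n)+k=(m'+n)+l$ for every $n\in M$, so $m\,\R\,m'$ implies $m+n\,\R\,m'+n$, and transitivity then yields that $m\,\R\,m'$ and $n\,\R\,n'$ together imply $m+n\,\R\,m'+n'$. Consequently $M/\R$ inherits a commutative monoid structure for which $q$ is a homomorphism. Moreover $q$ annihilates $K$: for $k\in K$ one has $k+0=0+k$ with $0,k\in K$, so $k\,\R\,0$ and thus $q(k)=0$.

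It then remains to establish the universal property. Given a homomorphism $f\colon M\to N$ with $f(K)=\{0\}$, I would show $f$ respects $\R$: if $m+k=n+l$ with $k,l\in K$, then $f(m)=f(m)+f(k)=f(m+k)=f(n+l)=f(n)+f(l)=f(n)$, so $f$ factors through $q$; uniqueness of the factorization is immediate since $q$ is surjective, hence epic. This identifies $q$ with the cokernel of the inclusion, justifying the notation $M/K$ for $M/\R$. The only place where any care is needed is the transitivity and congruence check, where commutativity of $M$ is precisely what lets the witnessing elements of $K$ recombine correctly; it is worth noting that normality of $K$ plays no role in this description of the cokernel and becomes relevant only later, when one wants $K$ itself to be recovered as the kernel of $q$.
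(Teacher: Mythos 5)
Your proof is correct and follows essentially the same route as the paper's: verify that $\R$ is a congruence, that the projection kills $K$, and that any morphism annihilating $K$ factors uniquely through it. The only differences are cosmetic (you check compatibility one variable at a time and then invoke transitivity, where the paper adds both defining equations at once), and your closing observation that normality of $K$ is not actually needed for the cokernel description is accurate.
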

	\begin{proof}
		Let us first show that $\R$ is an equivalence relation which the operation of $M$ is compatible with. Reflexivity and symmetry are easy. For $m$, $n$ and $p$ in $M$ and $k$, $k'$, $l$ and $l'$ in $K$ such that $m+k=n+l$ and $n+k'=p+l'$, we have $m+k+k'=p+l+l'$, so transitivity holds. Now, if there are $m'$ and $n'$ in $M$ with $m+k=m'+k'$ and $n+l=n'+l'$, then $m+n+k+l=m'+n'+k'+l'$, so $(m+n)\R(m'+n')$.
		Now we check that $\pi\colon M\rightarrow M/\R$ is the cokernel of the inclusion $\iota$. The formula $k=0+k$ shows that $\pi\iota=0$. Let $f\colon M\rightarrow N$ be a morphism with $f\iota=0$. We define $\tilde{f}\colon  M/\R\rightarrow N$ for all $m\in M$ by $\tilde{f}([m])=f(m)$. By the hypothesis on $f$, the map $\tilde{f}$ is well defined, and $f=\tilde{f}\pi$. Clearly it is a morphism, and the only one satisfying this condition.
	\end{proof}

	\subsection{Monoidal semilattices}
	
	We introduce the notion of a monoidal semilattice, which is just a semilattice with a minimum. It is also known as \textit{semilattice with $0$} in the literature. For a general introduction to lattice theory, we refer to \cite{Birkhoff,Gratzer}.
	
	\begin{definition}
		A \Def{(join)-semilattice} is the data of a poset $(L,\leqslant)$ such that the map $(a,b)\mapsto\sup(a,b)$ is everywhere defined. We write $a\vee b\coloneq \sup(a,b)$ for $a$, $b$ in $L$. A \Def{lattice} is a semilattice such that the map $(a,b)\mapsto\inf(a,b)$ is everywhere defined. We write $a\wedge b\coloneq \inf(a,b)$ for $a$, $b$ in $L$.
	\end{definition}
	
	In this text, we are mainly interested in those semilattices that have a minimum, because these provide a structure of commutative monoid where the minimum is the neutral element. Given a lattice $(L,\vee,\wedge)$ with a minimum $0$, we will sometimes consider the underlying monoid $(L,\vee,0)$. Morphisms between two lattices with a minimum $0$ considered in the category $\mathsf{Lat}$ of lattices preserve $\wedge$ and $\vee$, while they instead preserve $\vee$ and $0$ in $\cmon$. To avoid ambiguity, we introduce the category $\mathsf{SLMon}$; this is the full subcategory of $\cmon$ whose objects are what we call \Def{monoidal semilattices}, which are by definition semilattices with a minimum.
	
	\begin{proposition}
		Any finite monoidal semilattice is a lattice.\noproof
	\end{proposition}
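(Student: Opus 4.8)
The plan is to realize the meet of two elements explicitly as a join of common lower bounds. First I would fix a monoidal semilattice $L$ with minimum $0$, fix $a$, $b\in L$, and consider the set $S=\{x\in L : x\leqslant a\text{ and }x\leqslant b\}$ of common lower bounds of $a$ and $b$. Since $0\leqslant a$ and $0\leqslant b$, we have $0\in S$, so $S$ is nonempty; and $S$ is finite because $L$ is.

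The next step is to note that $S$ is closed under $\vee$: if $x$, $y\in S$ then $x\leqslant a$ and $y\leqslant a$ force $x\vee y\leqslant a$ by the defining property of the supremum, and likewise $x\vee y\leqslant b$, so $x\vee y\in S$. A routine induction on the cardinality of $S$, using associativity and commutativity of $\vee$, then shows that the join $m=\bigvee_{x\in S}x$ of all the (finitely many) elements of $S$ is defined in $L$ and again belongs to $S$.

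Finally I would verify that $m$ is the infimum of $a$ and $b$. On the one hand $m\in S$ means $m\leqslant a$ and $m\leqslant b$, so $m$ is a common lower bound of $a$ and $b$. On the other hand, by construction $x\leqslant m$ for every $x\in S$, so any common lower bound of $a$ and $b$ is below $m$. Hence $m=a\wedge b$. As $a$, $b$ were arbitrary, all binary meets exist in $L$, so $L$ is a lattice.

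The only delicate point is the existence of the finite join $\bigvee_{x\in S}x$: this is not part of the definition of a (join-)semilattice, but follows from the existence of binary joins by induction on $|S|$, and it is precisely here that finiteness of $L$ is used. For an infinite $L$ the argument genuinely fails, since the set of common lower bounds of $a$ and $b$ need not possess a supremum.
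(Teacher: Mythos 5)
Your proof is correct: this is the standard argument (the meet of $a$ and $b$ is the join of the finite, nonempty, $\vee$-closed set of common lower bounds, which is nonempty precisely because the minimum $0$ exists), and it is exactly the reasoning the paper leaves implicit, since the proposition is stated there without proof. The closing remark correctly identifies where finiteness and the existence of $0$ are each used.
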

	
	Now we compute normal monomorphisms and normal epimorphisms of monoidal semilattices, but still in the category $\cmon$.
	
	\begin{proposition}\label{latmono}
		Let $L$ be a monoidal semilattice. A submonoid $K$ of $L$ is normal if and only if we have the following condition:
		$$
			\forall x\in L\quad\forall k \in K\quad x\leqslant k\Longrightarrow x\in K
		$$
		In particular, if $L$ is finite, then $K$ is normal if and only if it is of the form $\shpos a\coloneq\{k\in L\mid k\leqslant a\}$, for some $a\in L$.
	\end{proposition}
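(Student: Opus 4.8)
The plan is to reduce the statement to the description of normal submonoids of a commutative monoid recalled in Remark~\ref{normonoid}. That remark says a submonoid $K$ of $L$ is normal if and only if $x\vee k\in K\Longleftrightarrow x\in K$ for every $k\in K$ and every $x\in L$, and the right-to-left implication holds automatically since $K$ is closed under $\vee$. So it remains to check that, for a submonoid $K$, the condition ``$x\vee k\in K\Rightarrow x\in K$ for all $k\in K$, $x\in L$'' is equivalent to the downward-closure condition in the statement. One direction: if $K$ is downward closed and $x\vee k\in K$ with $k\in K$, then from $x\leqslant x\vee k$ we get $x\in K$. Conversely, if $K$ satisfies the normality condition and $x\leqslant k$ with $k\in K$, then $x\vee k=k\in K$, whence $x\in K$. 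This proves the first equivalence.

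For the ``in particular'' clause, suppose $L$ is finite. If $K=\shpos a$ for some $a\in L$, then $K$ is a submonoid ($0\leqslant a$ gives $0\in K$, and $x,y\leqslant a$ gives $x\vee y\leqslant a$) and it is manifestly downward closed, hence normal by the first part. Conversely, let $K$ be a normal submonoid. Since $L$ is finite, $K$ is finite and contains $0$, so $a\coloneq\bigvee_{k\in K}k$ is a well-defined element of $L$, and in fact $a\in K$ because $K$, being a submonoid, is closed under finite joins. Every $k\in K$ satisfies $k\leqslant a$, so $K\subseteq\shpos a$; and $\shpos a\subseteq K$ is exactly downward closure of $K$ applied to the element $a\in K$. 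Therefore $K=\shpos a$.

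I do not anticipate a genuine obstacle; the argument is a direct unwinding of Remark~\ref{normonoid}. The only point where finiteness is really used is in ensuring that $\bigvee_{k\in K}k$ both exists and belongs to $K$: a downward-closed submonoid of an arbitrary monoidal semilattice need not be of the form $\shpos a$, so some such hypothesis is unavoidable for the second assertion.
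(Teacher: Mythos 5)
Your proposal is correct and follows essentially the same route as the paper: both reduce to the characterization in Remark~\ref{normonoid}, observe that for a submonoid the condition collapses to downward closure, and in the finite case take $a=\sup K\in K$ to get $K=\shpos a$. Your write-up is slightly more explicit (checking that $\shpos a$ is itself a normal submonoid), but there is no substantive difference.
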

	\begin{proof}
		We use the characterization of \ref{normonoid}. Suppose $K$ is a normal submonoid of~$L$, and take $k\in K$ and $x\in L$ with $x\leqslant k$. Thus $x\vee k=k\in K$, so $x\in K$. Conversely, let $x\in L$ and $k\in K$. If $y\coloneq k\vee x\in K$, then $x\in K$ because $x\leqslant y$, so $K$ is normal. When $K$ is finite, $a\coloneq\sup K\in K$, so $K=\shpos a$.
	\end{proof}
	
	\begin{proposition}\label{cokerlattice}
		Let $k$ be an element of a monoidal semilattice $L$. The cokernel of $\kappa\colon\shpos k\normono L$ in $\cmon$ can be obtained as the sublattice $\shneg k\coloneq\{l\in L\mid k\leqslant l\}$ of~$L$. The canonical projection $L\norepi\shneg k$ is the map $\pi\colon l\mapsto l\vee k$. Thus, a short exact sequence of finite monoidal semilattices is completely determined by a pair $(L,k)$ where $k\in L$.
	\end{proposition}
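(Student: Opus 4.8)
The plan is to combine Proposition~\ref{cokercmon} with Proposition~\ref{latmono}. I would first check that $\pi\colon L\to\shneg k$, $l\mapsto l\vee k$, is a morphism of monoidal semilattices. Here $\shneg k$ carries the restriction of the join of $L$ — which does restrict, since $l_1,l_2\geqslant k$ forces $l_1\vee l_2\geqslant k$ — and its minimum, hence its neutral element, is $k$. Preservation of $\vee$ is immediate from commutativity and idempotency of $\vee$, and $\pi(0)=0\vee k=k$ is the neutral element of $\shneg k$; moreover $\pi\kappa=0$ because every $a\leqslant k$ satisfies $\pi(a)=a\vee k=k$.

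Next I would establish the universal property of the cokernel. Let $f\colon L\to N$ in $\cmon$ satisfy $f\kappa=0$; then $f(k)=0_N$, and since $f$ preserves $\vee$ it is order-preserving, so $a\leqslant k$ gives $f(a)\leqslant f(k)=0_N$, whence $f(a)=0_N$ as $0_N$ is the minimum of $N$. Define $\tilde f\colon\shneg k\to N$ as the restriction of $f$; it preserves $\vee$ and sends $k$ to $0_N$, so it is a morphism, and $\tilde f\pi(l)=f(l\vee k)=f(l)\vee f(k)=f(l)$ gives $\tilde f\pi=f$. Uniqueness follows from surjectivity of $\pi$ (it is the identity on $\shneg k\subseteq L$). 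Thus $\shneg k$ with $\pi$ is the cokernel of $\kappa$; equivalently, one checks that the congruence $\R$ of Proposition~\ref{cokercmon} — there exist $a,b\leqslant k$ with $m\vee a=n\vee b$ — simplifies to $m\vee k=n\vee k$, so $L/\R\cong\shneg k$.

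For the last claim I would invoke Proposition~\ref{latmono}: in a finite monoidal semilattice every normal submonoid equals $\shpos k$ for a unique $k$ (uniqueness because $k=\sup\shpos k$). A short exact sequence with middle term $L$ amounts, up to isomorphism, to a normal monomorphism into $L$ — that is, an inclusion $\kappa\colon\shpos k\normono L$ — together with its cokernel $\pi\colon L\norepi\shneg k$ computed above; conversely any $k\in L$ produces such a sequence, since $\shpos k$ is normal, $\pi=\Coker(\kappa)$, and $\Ker(\pi)=\{l\in L\mid l\vee k=k\}=\shpos k$. Hence such sequences correspond exactly to pairs $(L,k)$. I do not anticipate a genuine obstacle: the only points requiring care are tracking the neutral element of $\shneg k$ and noticing that the relation of Proposition~\ref{cokercmon} collapses to $m\vee k=n\vee k$, both of which are routine.
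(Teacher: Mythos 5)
Your proposal is correct and follows essentially the same route as the paper: verify $\pi\kappa=0$ and then establish the universal property by restricting $f$ to $\shneg k$, using $f(x)=f(x)\vee f(k)=f(x\vee k)$. The extra observations (that the congruence of Proposition~\ref{cokercmon} collapses to $m\vee k=n\vee k$, and the explicit bijection with pairs $(L,k)$ via Proposition~\ref{latmono}) are sound but not needed beyond what the paper records.
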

	\begin{proof}
		We have $k$ is the neutral element of $\shneg k$, so $\pi\kappa=0$. Let $f\colon L\rightarrow M$ a morphism of commutative monoids such that $f\kappa=0$. We define $\tilde{f}\colon \shneg k\rightarrow M$ for all $x\in \shneg k$ by $\tilde{f}(x)=f(x)$. Thus, for all $x\in L$, we see that
		$$
			f(x)=f(x)+f(k)=f(x\vee k)=\tilde{f}\pi(x)
		$$
		and $\tilde{f}$ is clearly a morphism, and the only one satisfying $f=\tilde{f}\pi$.
	\end{proof}
	
	\begin{example}
		Consider the following monoidal semilattice $L$
		\[
			\begin{tikzpicture}
				\node(A) at (0,0){$A$};
				\node(B) at (-1,-1){$B$};
				\node(C) at (1,-1){$C$};
				\node(D) at (0,-2){$D$};
				\node(E) at (2,-2){$E$};
				\node(F) at (1,-3){$0$};
				\draw[-,>=latex] (A)--(B);
				\draw[-,>=latex] (A)--(C);
				\draw[-,>=latex] (B)--(D);
				\draw[-,>=latex] (C)--(D);
				\draw[-,>=latex] (C)--(E);
				\draw[-,>=latex] (D)--(F);
				\draw[-,>=latex] (E)--(F);
			\end{tikzpicture}
		\]
		The normal submonoid given by $E$ is on the left below, and the quotient $\shneg E$ is on the right.
		\[
			\begin{tikzpicture}
				\node(A) at (0,0){$E$};
				\node(B) at (-1,-1){$0$};
				\node(C) at (4,0.5){$A$};
				\node(D) at (5,-0.5){$C$};
				\node(E) at (6,-1.5){$E$};
				\draw[-,>=latex] (A)--(B);
				\draw[-,>=latex] (C)--(D);
				\draw[-,>=latex] (D)--(E);
			\end{tikzpicture}
		\]
		In the quotient, $A$ corresponds to the class of $A$ and $B$, $C$ corresponds to the class of $C$ and $D$, and $E$ corresponds to the class of $E$ and $0$.
	\end{example}
	
	\subsection{Lattices}
	
	We close this section recalling well-known characterizations of distributivity and modularity in a lattice. Remember that a lattice $(L,\vee,\wedge)$ is \Def{modular} whenever for every $x$, $y$ and $z$ in $L$ such that $x\geqslant z$, we have $x\wedge(y\vee z)=(x\wedge y)\vee z$, and is \Def{distributive} whenever for every $x$, $y$ and $z$ in $L$, the identity $x\wedge(y\vee z)=(x\wedge y)\vee (x\wedge z)$ holds. Any distributive lattice is modular.
	
	\begin{theorem}\label{thmlat}\cite[I,~Theorem~12,~I,~Theorem~13]{Birkhoff}
		A lattice is modular if and only if it does not contain the lattice on the left below (called \Def{pentagon}, and denoted $N(1,a,b,c,0)$) as a sublattice.
		\[
			\begin{tikzpicture}
				\node(A) at (0,0){$1$};
				\node(B) at (-1,-1){$a$};
				\node(C) at (1,-1.5){$c$};
				\node(E) at (-1,-2){$b$};
				\node(H) at (0,-3){$0$};
				\draw[-,>=latex] (A)--(B);
				\draw[-,>=latex] (A)--(C);
				\draw[-,>=latex] (B)--(E);
				\draw[-,>=latex] (C)--(H);
				\draw[-,>=latex] (E)--(H);
				\node(A') at (4,0){$1$};
				\node(B') at (3,-1.5){$a$};
				\node(C') at (4,-1.5){$b$};
				\node(E') at (5,-1.5){$c$};
				\node(H') at (4,-3){$0$};
				\draw[-,>=latex] (A')--(B');
				\draw[-,>=latex] (A')--(C');
				\draw[-,>=latex] (A')--(E');
				\draw[-,>=latex] (B')--(H');
				\draw[-,>=latex] (C')--(H');
				\draw[-,>=latex] (E')--(H');
			\end{tikzpicture}
		\]
		A modular lattice is distributive if and only if it does not contain the lattice on the right above (called \Def{diamond}, and denoted $M(1,a,b,c,0)$) as a sublattice.\noproof
	\end{theorem}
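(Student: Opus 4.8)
The statement packages the two classical forbidden-sublattice characterizations of Birkhoff, and I would treat the two biconditionals separately, in each case proving one implication by a direct computation inside the supposed forbidden sublattice and the converse by contraposition, manufacturing the forbidden sublattice out of a failure of the relevant lattice identity.

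\textit{The modularity criterion.} That a modular lattice contains no pentagon is immediate: in a copy of $N(1,a,b,c,0)$ one has $a\geqslant b$, $b\vee c=1=a\vee c$ and $b\wedge c=0=a\wedge c$, so applying the modular law to the pair $a\geqslant b$ gives $a=a\wedge 1=a\wedge(c\vee b)=(a\wedge c)\vee b=b$, contradicting $a\neq b$. For the converse I argue contrapositively: if $L$ is not modular, there are $x\geqslant z$ and $y$ with $(x\wedge y)\vee z\neq x\wedge(y\vee z)$, and since $(x\wedge y)\vee z\leqslant x\wedge(y\vee z)$ holds in every lattice, the inequality is strict. I then claim that
\[
m\coloneq x\wedge y,\qquad q\coloneq(x\wedge y)\vee z,\qquad p\coloneq x\wedge(y\vee z),\qquad M\coloneq y\vee z
\]
together with $y$ form a sublattice isomorphic to $N(M,p,q,y,m)$. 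Using only absorption and $z\leqslant x$ one checks that $y\wedge p=y\wedge q=m$, that $y\vee p=y\vee q=M$, that $m\leqslant q<p\leqslant M$, and that the five-element set is closed under $\vee$ and $\wedge$; distinctness of all five reduces to excluding $m=q$ and $p=M$, and $m=q$ forces $z\leqslant y$ hence $p=x\wedge y=q$, while $p=M$ forces $y\leqslant x$ hence $q=y\vee z=p$, each contradicting $q<p$.

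\textit{The distributivity criterion.} One direction is trivial: the diamond fails distributivity (in $M(1,a,b,c,0)$ one has $a\wedge(b\vee c)=a\neq 0=(a\wedge b)\vee(a\wedge c)$), and a sublattice of a distributive lattice is distributive, so no distributive lattice contains a diamond. For the converse, suppose $L$ is modular but not distributive and fix $x,y,z$ with $x\wedge(y\vee z)>(x\wedge y)\vee(x\wedge z)$. Introduce the median elements
\[
d\coloneq(x\wedge y)\vee(y\wedge z)\vee(z\wedge x),\qquad e\coloneq(x\vee y)\wedge(y\vee z)\wedge(z\vee x),
\]
so that $d\leqslant e$. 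One checks, in any lattice, that $x\wedge(y\vee z)=x\wedge e$, and, applying the modular law once, that $x\wedge d=(x\wedge y)\vee(x\wedge z)$; hence $d=e$ would force the distributive identity on the chosen triple, so in fact $d<e$. Now put
\[
u\coloneq d\vee(x\wedge e),\qquad v\coloneq d\vee(y\wedge e),\qquad w\coloneq d\vee(z\wedge e),
\]
all lying in the interval $[d,e]$. The crux is a direct computation with the modular law giving $u\vee v=v\vee w=w\vee u=e$ and $u\wedge v=v\wedge w=w\wedge u=d$; combined with $d<e$ this makes $d,u,v,w,e$ pairwise distinct and a sublattice isomorphic to $M(e,u,v,w,d)$.

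The part I expect to be delicate is the bundle of identities producing the diamond: verifying $u\vee v=e$ and $u\wedge v=d$, together with their cyclic variants, requires pushing several nested meets and joins past one another and invoking modularity inside the correct subinterval at each step, and one must make sure that the only degenerate case is $d=e$, which has already been excluded. The pentagon half, by contrast, is essentially bookkeeping once the five candidate elements are on the table.
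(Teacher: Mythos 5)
The paper does not prove this statement at all: it is quoted from Birkhoff's \emph{Lattice Theory} and marked as proof omitted. Your argument is precisely the classical proof from that source --- the pentagon $N(y\vee z,\,x\wedge(y\vee z),\,(x\wedge y)\vee z,\,y,\,x\wedge y)$ extracted from a failure of the modular law, and the diamond built from the medians $d$, $e$ and the elements $d\vee(x\wedge e)$, $d\vee(y\wedge e)$, $d\vee(z\wedge e)$ --- and it is correct; the identities $u\vee v=e$ and $u\wedge v=d$ that you flag as delicate do go through by rewriting $u=(x\vee(y\wedge z))\wedge e$ via the modular law and absorbing terms, exactly as in Birkhoff's Theorem~13.
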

	
	\begin{theorem}\label{cnsmod}\cite[I,~Theorem~13]{Birkhoff}
		A lattice $L$ is modular if and only if for every $x$, $y$ $\in L$, the map $\varphi\colon t\mapsto t\vee y$ is an isomorphism of lattices between $[x\wedge y,x]$ and $[y,x\vee y]$, where $[a,b]\coloneq\{k\in L\mid b\leqslant k\leqslant a\}$.
	\end{theorem}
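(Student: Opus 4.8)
The plan is to exhibit an explicit inverse for $\varphi$ and to recognize that $\varphi$ and this candidate are mutually inverse exactly when the modular law holds. Define $\psi\colon[y,x\vee y]\to[x\wedge y,x]$ by $\psi(s)=s\wedge x$. Before invoking any hypothesis I would check that $\varphi$ and $\psi$ really land in the stated intervals: if $x\wedge y\leqslant t\leqslant x$ then $y\leqslant t\vee y\leqslant x\vee y$, and if $y\leqslant s\leqslant x\vee y$ then $x\wedge y\leqslant s\wedge x\leqslant x$ (the left inequality because $s\geqslant y$). Both $\varphi$ and $\psi$ are visibly order preserving. Since a monotone bijection between lattices whose inverse is also monotone automatically preserves meets and joins, it suffices to prove that $\psi\varphi=\id$ and $\varphi\psi=\id$ if and only if $L$ is modular.

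Assume first that $L$ is modular. For $t\in[x\wedge y,x]$ one has $x\geqslant t$, so the modular identity $x\wedge(y\vee t)=(x\wedge y)\vee t$ applies and gives $\psi\varphi(t)=(t\vee y)\wedge x=x\wedge(y\vee t)=(x\wedge y)\vee t=t$, the last step because $t\geqslant x\wedge y$. Dually, for $s\in[y,x\vee y]$ one has $s\geqslant y$, so the modular identity with $s$, $x$, $y$ in the roles of $x$, $y$, $z$ gives $\varphi\psi(s)=(s\wedge x)\vee y=s\wedge(x\vee y)=s$, using $s\leqslant x\vee y$. Hence $\varphi$ is a bijection with inverse $\psi$, so it is an isomorphism of lattices.

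Conversely, assume that for all $x,y\in L$ the map $\varphi$ is an isomorphism, in particular injective. Fix $x,y,z$ with $x\geqslant z$. The elements $(x\wedge y)\vee z$ and $x\wedge(y\vee z)$ both lie in $[x\wedge y,x]$, and one always has $(x\wedge y)\vee z\leqslant x\wedge(y\vee z)$. Applying $\varphi$ and using $x\wedge y\leqslant y$, the smaller element is sent to $\bigl((x\wedge y)\vee z\bigr)\vee y=y\vee z$, while the larger is sent to $\bigl(x\wedge(y\vee z)\bigr)\vee y$, which is $\leqslant(y\vee z)\vee y=y\vee z$; by monotonicity of $\varphi$ it is also $\geqslant y\vee z$, so both images equal $y\vee z$. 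Injectivity of $\varphi$ then forces $(x\wedge y)\vee z=x\wedge(y\vee z)$, i.e.\ the modular law.

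I do not anticipate a serious obstacle; the argument is a short verification. The two points that need care are the orientation of the modular identity — only the inequality $(x\wedge y)\vee z\leqslant x\wedge(y\vee z)$ is automatic, and in each computation the modular law must be applied to a triple whose largest entry is the element being acted upon — and the observation, in the converse, that injectivity of $\varphi$ alone already delivers modularity, so the full strength of ``isomorphism of lattices'' is not needed there. Alternatively, the converse can be read off from Theorem~\ref{thmlat}: in a pentagon sublattice $N(1,a,b,c,0)$ one has $a>b$, $a\wedge c=0$ and $a\vee c=b\vee c=1$, so with $x=a$ and $y=c$ the map $\varphi$ sends both $a$ and $b$ to $1$ and fails to be injective.
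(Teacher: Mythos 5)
Your proof is correct, and the forward direction is essentially the paper's: you exhibit $\psi(s)=s\wedge x$ as a two-sided inverse via the modular law and then note that an order isomorphism of lattices preserves meets and joins (the paper instead observes directly that $\varphi$ commutes with $\vee$ and $\psi$ with $\wedge$, but this is the same computation). Where you genuinely diverge is the converse. The paper argues by contraposition through Theorem~\ref{thmlat}: a non-modular lattice contains a pentagon $N(1,a,b,c,0)$, on which $t\mapsto t\vee c$ identifies $a$ and $b$. You instead give a direct, self-contained computation: for $x\geqslant z$ the two elements $(x\wedge y)\vee z\leqslant x\wedge(y\vee z)$ both lie in $[x\wedge y,x]$ and both map to $y\vee z$ under $\varphi$, so injectivity alone forces the modular identity. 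Your route has two advantages: it does not lean on the (unproved, cited) forbidden-sublattice characterization, and it isolates that only injectivity of $\varphi$ is needed, not the full isomorphism hypothesis; the paper's route is shorter given that Theorem~\ref{thmlat} is already on hand, and it makes the failure mode concrete in the pentagon, which is exactly the picture exploited later in Examples~\ref{cmondpn} and~\ref{sescmonhsd}. Your closing remark correctly recovers the paper's argument as a special case. One incidental point: you read $[x\wedge y,x]$ as $\{k\mid x\wedge y\leqslant k\leqslant x\}$, which is the intended meaning and the one the paper's own proof uses, even though the displayed definition of $[a,b]$ in the statement has its bounds written in the opposite order.
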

	\begin{proof}
		Suppose first that $L$ is modular. For $t\in [x\wedge y,x]$, we have $(t\vee y)\wedge x=t\vee (y\wedge x)=t$, and for $u\in [y,x\vee y]$, we have $(u\wedge x)\vee y=u\wedge (x\vee y)=u$, so the map $\psi\colon u\mapsto u\wedge x$ is the inverse of $\varphi$. Since $\varphi$ commutes with $\vee$, the map $\psi$ does too, and likewise both $\varphi$ and $\psi$ commute with $\wedge$. Hence, they are isomorphisms of lattices.
		Now suppose that $L$ is not modular. Then $L$ contains a pentagon $N(1,a,b,c,0)$. Since $1=a\vee c=b\vee c$, the map $x\mapsto x\vee c$ from $[a\wedge c,a]$ to $[c,a\vee c]$ is not injective.
	\end{proof}
	
	\section{The lattice of normal subobjects}
	
	\subsection{Construction}\label{subsec4.1}
	
	We construct the lattice of normal subobjects of an object in any z-exact category. Let $\X$ be such a category, and $X$ an object of $\X$. A normal subobject of $X$ is an isomorphism class of normal monomorphisms with target $X$. For such a class represented by $y\colon Y\normono X$, we refer sometimes to $y$ or even to~$Y$. Denote $\nsub(X)$ the category whose objects are normal subobjects of $X$. A morphism between two normal subobjects is a morphism of $\X$ making the obvious triangle commute. For $Y\in\nsub(X)$, denote $\nsub(X\vert Y)$ the full subcategory of $\nsub(X)$ whose objects are those normal subobjects of $X$ which contain $Y$. The link between normal subobjects and di-extensions is that the data of an antinormal pair is essentially the same as the data of two normal subobjects of a given object, by taking for an antinormal pair $(e,m)$ the pair $(\Ker(e),m)$.
	
	\begin{proposition}
		In $\nsub(X)$, there is at most one morphism between any two objects, and it is determined by a normal monomorphism of $\X$.
	\end{proposition}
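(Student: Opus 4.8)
The plan is to unwind the definitions and reduce the statement to two elementary facts already available: that a normal monomorphism is in particular a monomorphism, and the first assertion of Lemma~\ref{normonocomp}. Fix two normal subobjects of $X$, represented by normal monomorphisms $y\colon Y\normono X$ and $z\colon Z\normono X$. By definition, a morphism between them in $\nsub(X)$ is a morphism $f\colon Y\to Z$ of $\X$ with $zf=y$.

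For the uniqueness clause, suppose $f,f'\colon Y\to Z$ both satisfy $zf=y=zf'$. Since $z$ is a normal monomorphism it is monic, so $f=f'$; hence there is at most one morphism from $Y$ to $Z$ in $\nsub(X)$. For the second clause, take any such morphism $f$, so that $zf=y$. Now $y=zf$ is a normal monomorphism and $z$ is monic, so Lemma~\ref{normonocomp} applies with $v=z$ and $u=f$, giving that $f$ is a normal monomorphism of $\X$. This is exactly the assertion that the (unique) morphism, when it exists, is a normal monomorphism.

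I do not anticipate a genuine obstacle here. The only point requiring a word of care is independence of the chosen representatives $y$ and $z$: replacing them by isomorphic normal monomorphisms changes $f$ only by pre- and post-composition with isomorphisms, which preserves both being monic and being a normal monomorphism, so the conclusion is unaffected. Thus the argument is a short, routine application of Lemma~\ref{normonocomp} together with the observation that normal monomorphisms are monic.
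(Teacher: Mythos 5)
Your proof is correct and follows exactly the paper's argument: uniqueness comes from $z$ being monic, and the morphism being a normal monomorphism comes from Lemma~\ref{normonocomp} applied to the factorization $y=zf$. The additional remark about independence of representatives is a harmless extra precaution.
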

	\begin{proof}
		The first assertion is true by definition of a monomorphism and the second one comes from \ref{normonocomp}.
	\end{proof}
	
	\begin{lemma}\label{uniinter}
		Suppose $\X$ is z-exact. The product of the subobjects represented by any two morphisms $y\colon Y\normono X$ and $z\colon Z\normono X$ in $\nsub(X)$ exists and is represented by the pullback of $y$ and $z$ in $\X$. We write it $Y\inter Z$. The coproduct exists as well, and is represented by the kernel of the cokernel of the morphism $Y\rightarrow X/Z$. We write it $Y\union Z$.
	\end{lemma}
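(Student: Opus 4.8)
The plan is to lean on the preceding proposition, which says that $\nsub(X)$ is a poset: a product there is just a greatest lower bound and a coproduct a least upper bound, and having a morphism $W\to Y$ in $\nsub(X)$ means exactly that the representing normal monomorphism $w$ factors through $y$, which I abbreviate $W\leqslant Y$. So the task is, in each case, to exhibit a normal subobject of $X$ with the appropriate extremal property for $\leqslant$.

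For the product I would take the pullback $P$ of $y$ and $z$ in $\X$, with projections $p_Y\colon P\to Y$ and $p_Z\colon P\to Z$. Since $\X$ is z-exact it has cokernels, so the second part of Lemma \ref{pullbacknormono} applies and the diagonal $d\colon P\to X$ of the pullback square is a normal monomorphism; hence $P$ (via $d$) represents a normal subobject $Y\inter Z$, and the projections witness $Y\inter Z\leqslant Y$ and $Y\inter Z\leqslant Z$. If $W\normono X$ satisfies $W\leqslant Y$ and $W\leqslant Z$, the two factorizations of $w$ through $y$ and through $z$ agree after composing into $X$, so the universal property of the pullback gives a factorization of $w$ through $P$, i.e.\ $W\leqslant Y\inter Z$. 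Thus $Y\inter Z$ is the greatest lower bound; this half is routine.

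For the coproduct, set $q=\Coker(z)\colon X\norepi X/Z$, let $g$ be the composite $Y\xrightarrow{y}X\xrightarrow{q}X/Z$, let $c=\Coker(g)\colon X/Z\to Q$, and put $u=\Ker(cq)\colon U\normono X$. By Lemma \ref{pullbacknoyau}, $U$ is the pullback along $q$ of $\Ker(c)$ — the kernel of the cokernel of $g\colon Y\to X/Z$ — so $U$ is precisely the normal subobject of $X$ named in the statement. The containments $Z\leqslant U$ and $Y\leqslant U$ are immediate from $cqz=c\cdot 0=0$ (as $qz=0$) and $cqy=cg=0$. The real content is the universal property: given $W\normono X$ with $Y\leqslant W$ and $Z\leqslant W$, write $r=\Coker(w)$, so that $w=\Ker(r)$ because $w$ is a normal monomorphism. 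From $Z\leqslant W$ one gets $rz=0$, so $r$ factors as $\bar r q$ through $q=\Coker(z)$; from $Y\leqslant W$ one gets $\bar r g=\bar r qy=ry=0$, so $\bar r$ factors as $sc$ through $c=\Coker(g)$. Then $r=scq$, whence $ru=scqu=0$ since $cqu=0$, so $u$ factors through $w=\Ker(r)$, i.e.\ $U\leqslant W$. Hence $U$ is the least upper bound of $Y$ and $Z$.

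I expect the obstacle to lie in two spots. First, the naive pullback only obviously gives a monomorphism into $X$; that it is again \emph{normal} is exactly what Lemma \ref{pullbacknormono} (with the cokernel hypothesis) is there to provide. Second, and more essentially, checking that the candidate coproduct $U$ lies below every common upper bound $W$ requires the successive factorization of $\Coker(w)$ first through $\Coker(z)$ and then through $\Coker(g)$, and closing the argument uses $\Ker(\Coker(w))=w$; this is where the z-exactness is actually spent, while the remainder is formal play with universal properties in the poset $\nsub(X)$.
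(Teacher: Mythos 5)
Your argument is essentially the paper's own: the product is the pullback, made normal by the second part of Lemma \ref{pullbacknormono}, and the coproduct is $\Ker(cq)$ with the universal property verified by factoring $\Coker(w)$ successively through $\Coker(z)$ and then through $\Coker(g)$, closing with $w=\Ker(\Coker(w))$. The one point you skip is the \emph{existence} of the pullback of $y$ and $z$: a z-exact category is not assumed finitely complete, so this is not free. The paper supplies it by observing that this pullback is the kernel of the composite $Y\normono X\norepi X/Z$ (Lemma \ref{pullbacknoyau}), which you in fact already use in the coproduct half; adding that one line makes your proof complete.
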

	\begin{proof}
		The pullback of $y$ and $z$ exists since it is the kernel of the composite $Y\normono X\norepi X/Z$ (Lemma \ref{pullbacknoyau}). We obtain a normal subobject because of \ref{pullbacknormono}. Let us prove the existence of the coproduct. Let $Q$ be the cokernel of $Y\rightarrow X/Z$. By Lemma \ref{pullbacknoyau}, it is the pushout of $X/Y$ and $X/Z$ under $X$, and it is also the cokernel of $Z\rightarrow X/Y$. Let $k\colon K\normono X$ be the kernel of $q\colon X\norepi Q$. Write also $\pi_Y$ and $\pi_Z$ for the respective cokernels of $y$ and $z$. The morphism $q$ factors through both $\pi_Y$ and $\pi_Z$, hence $qy=qz=0$. So $y$ and $z$ are both smaller than $k$.
		\[
			\begin{tikzpicture}
				\node(A) at (0,0){$Y$};
				\node(B) at (0,-4){$Z$};
				\node(D) at (0,-2){$K$};
				\node(E) at (4,-2){$X$};
				\node(F) at (6,-2){$Q$};
				\draw[\normalmono] (A)--(D);
				\draw[\normalmono] (B)--(D);
				\draw[\normalmono] (A)--(E) node[above,midway]{$y$};
				\draw[\normalmono] (B)--(E) node[below,midway]{$z$};
				\draw[\normalmono] (D)--(E) node[above,midway]{$k$};
				\draw[\normalepi] (E)--(F) node[above,midway]{$q$};
			\end{tikzpicture}
		\]
		To see that $k$ is the coproduct, take $t\colon T\normono X$  and write $p\colon X\norepi P$ its cokernel. Consider a cocone $Y\overset{y'}{\longrightarrow} T \overset{z'}{\longleftarrow} Z$ in $\nsub(X)$. Then $P$ is also the cokernel of $T\rightarrow X/Y$, hence $Y\rightarrow T\rightarrow P$ is $0$, so $p$ factors through $q$. This proves that $k$ factors through $t$.
	\end{proof}
	
	\begin{corollary}
		The category $\nsub(X)$ is finitely complete and finitely cocomplete.
	\end{corollary}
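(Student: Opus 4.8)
The plan is to reduce everything to Lemma~\ref{uniinter}. Since $\nsub(X)$ has at most one morphism between any two objects and its objects are isomorphism classes of normal monomorphisms, it is a poset; in a poset every naturality square in a cone or cocone commutes automatically, so the limit of a finite diagram is simply the infimum of the finitely many objects occurring in it, and dually the colimit is their supremum. In particular equalizers and coequalizers are trivial, and $\nsub(X)$ is finitely complete as soon as it has a top element together with binary infima, and finitely cocomplete as soon as it has a bottom element together with binary suprema. The binary infima $Y\inter Z$ and binary suprema $Y\union Z$ are exactly the products and coproducts furnished by Lemma~\ref{uniinter}, so it remains only to exhibit a top and a bottom element.

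First I would check that the identity $\id_X\colon X\to X$ is a normal monomorphism, being the kernel of the zero morphism $X\to 0$. Hence $X$, viewed as the normal subobject represented by $\id_X$, is an object of $\nsub(X)$, and for any $y\colon Y\normono X$ the morphism $y$ itself is the (necessarily unique) comparison morphism from $Y$ to $X$ in $\nsub(X)$; so $X$ is the top element. Dually, the morphism $0\normono X$ out of the zero object is the kernel of $\id_X$, hence a normal monomorphism, and it is contained in every normal subobject $z\colon Z\normono X$ via the zero morphism $0\to Z$, which by Lemma~\ref{normonocomp} is again a normal monomorphism; so $0$ is the bottom element.

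Combining these observations, $\nsub(X)$ has a terminal object $X$ and all binary products, hence all finite products, and all equalizers (trivially in a poset), so it is finitely complete; dually it is finitely cocomplete. The argument is essentially formal: the only points requiring any care are the verifications that $\id_X$ and $0\normono X$ are genuinely normal monomorphisms, so that $X$ and $0$ really do lie in $\nsub(X)$, together with the routine reduction of finite (co)limits in a poset to finite meets (respectively joins). I do not expect any genuine obstacle beyond this bookkeeping.
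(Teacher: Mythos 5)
Your proof is correct and follows essentially the same route as the paper: binary products and coproducts from Lemma~\ref{uniinter}, the terminal object $X$ and initial object $0$, and trivial (co)equalizers because the category is a preorder. The extra verifications that $\id_X$ and $0\normono X$ are normal monomorphisms are accurate and merely make explicit what the paper leaves implicit.
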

	
	\begin{proof}
		Lemma \ref{uniinter} shows that $\nsub(X)$ admits binary products and coproducts. The terminal object is $X$ while the initial object is $0$. Since any two parallel morphisms are equal, equalizers and coequalizers exist and are just identities.
	\end{proof}
	
	\begin{corollary}
		The category $\nsub(X)$ admits a lattice structure, with the product as the intersection and the coproduct as the union, and with $X$ as the maximum and $0$ as the minimum.\noproof
	\end{corollary}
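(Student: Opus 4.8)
The plan is to observe that $\nsub(X)$ is a \emph{thin} category: by the proposition preceding Lemma \ref{uniinter}, there is at most one morphism between any two of its objects. First I would upgrade this to a genuine partial order. If $Y, Z \in \nsub(X)$ admit morphisms $Y \to Z$ and $Z \to Y$ in $\nsub(X)$, then their composites are endomorphisms of $Y$ and of $Z$, hence equal to the identities by thinness; so $Y$ and $Z$ represent the same normal subobject. Thus the relation ``there exists a (necessarily unique) morphism $Y \to Z$ in $\nsub(X)$'' is reflexive, transitive and antisymmetric, and I would write it $Y \leqslant Z$, turning $\nsub(X)$ into a poset.

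Next I would invoke the standard dictionary between thin categories and posets: in a poset, a binary product is exactly an infimum and a binary coproduct is exactly a supremum, an initial object is a least element and a terminal object is a greatest element. Under this dictionary, Lemma \ref{uniinter} says precisely that every pair $Y, Z \in \nsub(X)$ has an infimum $Y \inter Z$, represented by the pullback of $y$ and $z$, and a supremum $Y \union Z$, represented by the kernel of the cokernel of $Y \to X/Z$. Hence $(\nsub(X), \leqslant, \union, \inter)$ is a lattice, with $\inter$ the meet and $\union$ the join. The previous corollary identifies the terminal object as $X$ (via $\id\colon X \normono X$) and the initial object as $0 \normono X$, so these are the maximum and the minimum, making $\nsub(X)$ a bounded lattice.

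There is no real difficulty here; the one point worth stating explicitly is the passage from ``$\nsub(X)$ has binary products and coproducts and a terminal and initial object'' to ``$\nsub(X)$ is a bounded lattice'', which is exactly the general fact that a thin category with finite products and coproducts is a bounded lattice. Everything else — antisymmetry of the order, and the translation of the universal properties in Lemma \ref{uniinter} into the lattice identities — is routine once the category is known to be thin.
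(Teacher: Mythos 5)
Your argument is correct and is exactly the reasoning the paper leaves implicit (the corollary is stated without proof): since $\nsub(X)$ is thin by the preceding proposition, the finite products, coproducts, terminal and initial objects supplied by Lemma \ref{uniinter} and its corollary translate directly into meets, joins, maximum and minimum of a bounded lattice. Nothing is missing; the antisymmetry point is handled correctly because normal subobjects are by definition isomorphism classes.
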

	
	\begin{proposition}
		Given a normal subobject $x\colon X'\normono X$ of $X$, let $y\colon Y\normono X'$ and $z\colon Z\normono X'$ in $\nsub(X')$ be such that $xy$ and $xz$ are in $\nsub(X)$. Then the coproduct of $y$ and $z$ in $\nsub(X')$ has the same underlying object of $\X$ as the coproduct of $xy$ and $xz$ in $\nsub(X)$.
	\end{proposition}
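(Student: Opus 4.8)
The plan is to realise both coproducts as kernels, fit them into a single square over $x\colon X'\normono X$, and prove that square is a pullback, so that Lemma~\ref{pullbackiso} identifies the two kernels.

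Write $W$ for the coproduct of $y$ and $z$ in $\nsub(X')$ and $K$ for the coproduct of $xy$ and $xz$ in $\nsub(X)$; both exist by Lemma~\ref{uniinter}. Since $xy=x\circ y$ and $xz=x\circ z$ factor through $x$, we have $xy\leqslant x$ and $xz\leqslant x$ in $\nsub(X)$, hence $K\leqslant x$. Thus $K\normono X$ factors through $x$ and, by Lemma~\ref{normonocomp}, the factoring monomorphism $k'\colon K\normono X'$ is normal, so $K$ represents an object of $\nsub(X')$. Because $x$ is monic, the inequalities $xy\leqslant K$, $xz\leqslant K$ of $\nsub(X)$ also give $Y\leqslant K$, $Z\leqslant K$ in $\nsub(X')$, so $W\leqslant K$ there; write $w\colon W\normono X'$, $k\colon K\normono X$ for the structure maps and $j\colon W\normono K$ for the comparison normal monomorphism, so $xw=kj$. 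It remains to show $j$ is an isomorphism; equivalently, that $xw$ is a normal monomorphism, for then $xw$ is a normal subobject of $X$ lying above $xy$ and $xz$, whence $K\leqslant xw$, which together with $W\leqslant K$ forces $j$ to be invertible.

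To compare $W$ and $K$ I would make Lemma~\ref{uniinter} explicit. Put $c_Z=\Coker(z)\colon X'\norepi X'/Z$ and $c'_Z=\Coker(xz)\colon X\norepi X/(xz)$; by Lemma~\ref{pullbacknoyau}, $c'_Z$ is the pushout of $c_Z$ along $x$, giving a comparison $\bar x_Z\colon X'/Z\to X/(xz)$, and $W=\Ker(\pi c_Z)$, $K=\Ker(\pi' c'_Z)$, where $\pi=\Coker(c_Z y\colon Y\to X'/Z)$ and $\pi'=\Coker(c'_Z(xy)\colon Y\to X/(xz))$. Since $c'_Z(xy)=\bar x_Z\circ(c_Z y)$, Lemma~\ref{pullbacknoyau} again shows $\pi'$ is the pushout of $\pi$ along $\bar x_Z$, producing $\lambda\colon\bar P\to\bar P'$, with $\bar P,\bar P'$ the codomains of $\pi,\pi'$. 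All these maps fit into one commutative square whose top row is $x\colon X'\to X$, bottom row $\lambda\colon\bar P\to\bar P'$, and columns $\pi c_Z$ and $\pi' c'_Z$.

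I would then show this square is a pullback and conclude with Lemma~\ref{pullbackiso}. The top row $x$ is the kernel of $\Coker(x)\colon X\to X/X'$; as $\Coker(x)$ annihilates $xz$ it factors as $q'\circ c'_Z$, and as $q'\bar x_Z=0$ it factors further as $q''\circ\pi'$ with $q''\colon\bar P'\to X/X'$ satisfying $q''\lambda=0$ (using that $\pi$ is epic). With $p=\id_{X/X'}$, Lemma~\ref{pullbackreco} identifies the square as a pullback, and Lemma~\ref{pullbackiso} then gives that the map induced on the vertical kernels — which is exactly $j\colon W\to K$ — is an isomorphism. The single hypothesis of Lemma~\ref{pullbackreco} left to check, and the main obstacle, is that $\lambda$ is a monomorphism. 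For this one first shows that $\bar x_Z\colon X'/Z\normono X/(xz)$ is a normal monomorphism: it is the image of the normal subobject $x\colon X'\normono X$ under the normal epimorphism $c'_Z$, and since $X'$ contains $\Ker(c'_Z)=xz$, this image is normal by the correspondence between $\nsub(X/(xz))$ and $\nsub(X\vert xz)$; then, one rung higher, $\lambda$ is the image of $\bar x_Z$ under $\pi'$ — and $\bar x_Z$ contains $\Ker(\pi')$, because $c'_Z(xy)$ factors through it — so the same correspondence makes $\lambda$ a normal monomorphism, hence monic. It is precisely at this point that the hypothesis that $xy$ and $xz$ be normal enters essentially.
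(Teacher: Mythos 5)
Your first paragraph is correct and is essentially the first half of the paper's own argument: $K\leqslant x$ in $\nsub(X)$, so by Lemma~\ref{normonocomp} the factorization $k'\colon K\normono X'$ is a normal monomorphism, $K$ contains $Y$ and $Z$ in $\nsub(X')$, and therefore $W\leqslant K$ there. Your reduction of the remaining inclusion $K\leqslant W$ to the normality of $xw$, and the plan of exhibiting $W$ and $K$ as the vertical kernels of a square over $x$ and $\lambda$, are also fine; the applications of Lemmas~\ref{pullbacknoyau}, \ref{pullbackreco} and \ref{pullbackiso} are set up correctly. The proof collapses at the one point you yourself single out: the monicity of $\lambda$, which you derive from the claim that $\bar x_Z\colon X'/Z\to X/(xz)$ is a normal monomorphism. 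That claim is precisely the Third Isomorphism Property for the totally normal sequence $Z\normono X'\normono X$, which the paper records as one of the equivalent formulations of homological self-duality --- a hypothesis this proposition does not carry. The ``correspondence between $\nsub(X/(xz))$ and $\nsub(X\vert xz)$'' you appeal to is Proposition~\ref{propnsub}, but its map $\Psi$ sends a normal subobject $U\supseteq xz$ to $\Ker(X/(xz)\to X/U)$, \emph{not} to the image $U/(xz)$; the two agree only under homological self-duality (that is exactly item~(\ref{3hsd}) there), so it yields neither normality nor even monicity of images. The step is not merely unjustified but false in this generality: $\mon$ is regular and z-exact yet not homologically self-dual, so by Lemma~\ref{monoreg} it contains a totally normal sequence whose comparison map $Y/X\to Z/X$ fails to be monic, and Example~\ref{sescmonhsd} exhibits a further failure of normality. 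Finally, since $\Ker(\pi'c'_Zx)$ is the pullback of $K$ along $x$, i.e.\ $K$ viewed in $\nsub(X')$, the pullback property of your square is literally equivalent to $K\leqslant W$, so once the argument for $\lambda$ falls, nothing of the hard direction remains.

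The paper's proof takes a different route around this point. It works with $p=\Coker(k)$ and $p'=\Coker(k')$ rather than with the composites $\pi c_Z$ and $\pi'c'_Z$, obtains $q\colon Q'\to Q$ with $qp'=px$ from $pxk'=pkf=0$, and then shows that every $g$ with $p'g=0$ factors uniquely through the factorization $\varphi$ of $k$ through $x$, concluding that $\varphi$ is a kernel of $p'$ and hence coincides with $k'$. The implication used there is the forward one, $p'g=0\Rightarrow pxg=qp'g=0$, which requires no injectivity of the induced map between quotients; this is how the paper sidesteps your $\lambda$. Note, however, that the inclusion you are missing corresponds in that setting to the identity $p'\varphi=0$: in whatever formulation, that is the statement your argument must ultimately deliver, and an appeal to the Third Isomorphism Property cannot be the way to obtain it in a bare z-exact category.
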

	
	\begin{proof} 
		Let $k'\colon I'\rightarrow X'$ be the coproduct of $y$ and $z$ in $\nsub(X')$ and $k\colon I\rightarrow X$ the coproduct of $xy$ and $xz$ in $\nsub(X)$. Write $p'=\Coker(k')\colon X'\rightarrow Q'$, $p=\Coker(k)\colon X\rightarrow Q$. We obtain a commutative diagram as follows
		\[
			\begin{tikzpicture}
				\node(A) at (0,0){$I'$};
				\node(B) at (2,0){$X'$};
				\node(C) at (0,-2){$I$};
				\node(D) at (2,-2){$X$};
				\node(E) at (4,0){$Q'$};
				\node(F) at (4,-2){$Q$};
				\draw[\normalmono] (A)--(B) node[above,midway]{$k'$};
				\draw[\normalmono] (C)--(D) node[below,midway]{$k$};
				\draw[\map] (A)--(C) node[left,midway]{$f$};
				\draw[\normalmono] (B)--(D) node[left,midway]{$x$};
				\draw[\normalepi] (B)--(E) node[above,midway]{$p'$};
				\draw[\normalepi] (D)--(F) node[below,midway]{$p$};
				\draw[\map] (E)--(F) node[left,midway]{$q$};
			\end{tikzpicture}
		\]
		We want to show that $I$ is also a kernel of $p'$. Since $X'$ is a subobject of $X$ which contains $Y$ and $Z$, there is a normal monomorphism $\varphi\colon I\normono X'$ such that $x\varphi=k$. Moreover, $x\varphi f=kf=xk'$, so $\varphi f=k'$ since $x$ is monic. Now suppose there is some $g\colon T\rightarrow X'$ in $\X$ such that $p'g=0$; then $pxg=qp'g=0$, so there is some $\gamma\colon T\rightarrow I$ such that $xg=k\gamma$. Thus we have $x\varphi\gamma=xg$ and so $\varphi\gamma=g$. The uniqueness of $\gamma$ comes from the monic property of $\varphi$.
	\end{proof}

	\begin{proposition}\label{cokersquare}
		Given a commutative square of normal monomorphisms
		\[
			\begin{tikzpicture}
				\node(A) at (0,0){$W$};
				\node(B) at (2,0){$X$};
				\node(C) at (0,-2){$Y$};
				\node(D) at (2,-2){$Z$};
				\draw[\normalmono] (A)--(B) node[above,midway]{};
				\draw[\normalmono] (C)--(D) node[above,midway]{};
				\draw[\normalmono] (A)--(C) node[left,midway]{};
				\draw[\normalmono] (B)--(D) node[left,midway]{};
			\end{tikzpicture}
		\]
		suppose the diagonal composite is also normal. Then the cokernel of the induced map $Y/W\rightarrow Z/X$ is  $Z/(X\union Y)$.  By symmetry, it is also the cokernel of $X/W\rightarrow Z/Y$.
	\end{proposition}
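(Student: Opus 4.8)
The plan is to produce an explicit candidate for the cokernel --- the canonical normal epimorphism $\theta\colon Z/X\norepi Z/(X\union Y)$ --- and to verify directly that it has the universal property with respect to the induced map $\bar\iota\colon Y/W\to Z/X$. First I would fix notation: write $x\colon X\normono Z$ and $y\colon Y\normono Z$ for the two normal monomorphisms into $Z$, $w\colon W\normono Z$ for the diagonal composite, $\pi_X\coloneq\Coker(x)$, and $q_W\coloneq\Coker(W\normono Y)\colon Y\norepi Y/W$. Commutativity of the square gives $\pi_X y\circ(W\normono Y)=\pi_X w=0$, so $\pi_X y$ factors through $q_W$; this factor is exactly $\bar\iota$, so that $\bar\iota\,q_W=\pi_X y$. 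By Lemma~\ref{uniinter} the join $X\union Y$ exists and is a normal subobject of $Z$ containing $X$, so the cokernel $\pi_{X\union Y}\colon Z\norepi Z/(X\union Y)$ annihilates $x$ and therefore factors uniquely through $\pi_X$; call the factor $\theta$, so that $\theta\pi_X=\pi_{X\union Y}$, and $\theta$ is epic because $\pi_{X\union Y}$ is.

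Next I would check $\theta\bar\iota=0$: precomposing with the epimorphism $q_W$ gives $\theta\bar\iota\,q_W=\theta\pi_X y=\pi_{X\union Y}\,y$, which vanishes since $y$ factors through $X\union Y\normono Z$ via the coprojection of $Y$ into the join. For the universal property, take $f\colon Z/X\to T$ with $f\bar\iota=0$ and set $g\coloneq f\pi_X\colon Z\to T$. Then $gx=f\pi_X x=0$ and $gy=f\bar\iota\,q_W=0$, so both $x$ and $y$ factor through $\ell\coloneq\Ker(g)\colon L\normono Z$ --- a normal subobject of $Z$, since kernels are normal monomorphisms. Hence $X\leqslant L$ and $Y\leqslant L$ in $\nsub(Z)$, and because $\union$ is the join in $\nsub(Z)$ this forces $X\union Y\leqslant L$; thus $g$ annihilates $X\union Y\normono Z$ and factors as $g=\bar f\,\pi_{X\union Y}$ for a unique $\bar f\colon Z/(X\union Y)\to T$. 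Then $f\pi_X=\bar f\,\pi_{X\union Y}=\bar f\theta\pi_X$, and cancelling $\pi_X$ gives $f=\bar f\theta$; uniqueness of $\bar f$ follows from $\theta$ being epic. This establishes that $\theta$ is a cokernel of $\bar\iota$, i.e.\ the cokernel of $Y/W\to Z/X$ is $Z/(X\union Y)$, and the statement about $X/W\to Z/Y$ then follows by interchanging $X$ and $Y$ and using $X\union Y=Y\union X$.

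The step I expect to carry the weight is the implication ``$g$ kills $X$ and $Y$, hence $g$ kills $X\union Y$''. Everything else is routine chasing of the universal properties of cokernels; this step instead rests on the two structural inputs proved earlier --- that $\union$ is the join in the lattice $\nsub(Z)$ (Lemma~\ref{uniinter} and the corollaries following it) and that $\Ker(g)$ is itself a normal subobject of $Z$. In particular, no self-duality hypothesis is invoked; the normality of the diagonal composite is not needed for this argument, and is assumed only to place the statement within the totally normal setting in which it is applied.
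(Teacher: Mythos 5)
Your proof is correct; every step checks out against the lemmas available at this point in the paper (Lemma \ref{uniinter} for the existence and join property of $\union$, and the fact that kernels are normal monomorphisms). The route, however, is more roundabout than the paper's. The paper's proof is a one-liner: since $Y\norepi Y/W$ is an epimorphism, precomposing with it does not change cokernels, so $\Coker(Y/W\rightarrow Z/X)=\Coker(Y\rightarrow Z/X)$, and the latter is $Z/(X\union Y)$ essentially by the very construction of the join in Lemma \ref{uniinter} (where $X\union Y$ is \emph{defined} as the kernel of the cokernel of $Y\rightarrow Z/X$). Your argument instead exhibits the candidate $\theta\colon Z/X\to Z/(X\union Y)$ and verifies its universal property from scratch; the step ``$g$ kills $X$ and $Y$, hence kills $X\union Y$'' amounts to re-deriving the identification $\Coker(Y\rightarrow Z/X)=Z/(X\union Y)$ from the coproduct property of $\union$ rather than reading it off the construction. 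What your version buys is self-containedness and the explicit observation --- consistent with the paper's own proof, which also never uses it --- that normality of the diagonal composite is not needed for the cokernel computation (it matters only for the companion statement about kernels under homological self-duality); what it costs is length, since all the universal-property chasing is subsumed by the single remark that cokernels are insensitive to precomposition with epimorphisms.
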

	
	\begin{proof} 
		Since $Y\norepi Y/W$ is an epi, $Y/W\rightarrow Z/X$ has the same cokernel as $Y\rightarrow Z/X$, which is $Z/(X\union Y)$ by \ref{uniinter}.
	\end{proof}
	
	\begin{proposition}
		In the configuration of \ref{cokersquare}, if $\X$ is homologically self-dual, then the kernel of the map $Y/W\rightarrow Z/X$ is $(X/W)\inter(Y/W)\in\nsub(Z/W)$. By symmetry, it is also the kernel of $X/W\rightarrow Z/Y$.
	\end{proposition}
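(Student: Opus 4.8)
The plan is to recognise the map $Y/W\to Z/X$ as a normal monomorphism followed by a normal epimorphism, and then to read off its kernel from Lemmas \ref{pullbacknoyau} and \ref{uniinter}. The only real work is the identification of that composite; the kernel computation is then immediate.

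First I would exploit the standing hypothesis that the diagonal $W\to Z$ is normal: it makes both $W\normono X\normono Z$ and $W\normono Y\normono Z$ totally normal sequences of monomorphisms (Definition \ref{totnorseq}). Since $\X$ is homologically self-dual, the Third Isomorphism Property applies to each of them, yielding short exact sequences $X/W\normono Z/W\norepi Z/X$ and $Y/W\normono Z/W\norepi Z/Y$. In particular $X/W$ and $Y/W$ are normal subobjects of $Z/W$, so $(X/W)\inter(Y/W)$ is defined in $\nsub(Z/W)$ and, by Lemma \ref{uniinter}, is represented by the pullback of $X/W\normono Z/W$ and $Y/W\normono Z/W$; its diagonal composite to $Z/W$ is a normal monomorphism by Lemma \ref{pullbacknormono}, so this subobject genuinely lies in $\nsub(Z/W)$. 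Moreover $X/W=\Ker(Z/W\norepi Z/X)$.

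Next I would identify the induced map $Y/W\to Z/X$ with the composite $Y/W\normono Z/W\norepi Z/X$. Because $W\normono X\normono Z$, the composite $W\to Z\norepi Z/X$ is $0$, so $Z\norepi Z/X$ factors through $Z\norepi Z/W$, and the resulting factor is exactly the normal epimorphism $Z/W\norepi Z/X$ of the short exact sequence above. On the other hand $Y\to Z/X$ kills $W$, hence factors through $Y\norepi Y/W$, which is how $Y/W\to Z/X$ arises in the first place. Chasing the square, the composite $Y\norepi Y/W\to Z/X$ equals $Y\normono Z\norepi Z/W\norepi Z/X$; since $Y\normono Z\norepi Z/W$ factors as $Y\norepi Y/W\normono Z/W$ (again by the Third Isomorphism Property), cancelling the epimorphism $Y\norepi Y/W$ gives $Y/W\to Z/X = (Y/W\normono Z/W\norepi Z/X)$, as wanted.

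Finally, Lemma \ref{pullbacknoyau} computes the kernel of this composite as the pullback of $\Ker(Z/W\norepi Z/X)=X/W$ along the inclusion $Y/W\normono Z/W$, which by Lemma \ref{uniinter} is precisely $(X/W)\inter(Y/W)$. Exchanging the roles of $X$ and $Y$ — legitimate since the square and its hypotheses are symmetric — yields the statement for the kernel of $X/W\to Z/Y$. The step deserving the most care is the middle one, where one must check that the ``induced map'' of the statement is literally the composite $Y/W\normono Z/W\norepi Z/X$; once this is in place, and once the totally-normal hypotheses have been checked so that the Third Isomorphism Property may be invoked, the rest is a direct application of the cited lemmas.
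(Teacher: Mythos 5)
Your proposal is correct and follows exactly the paper's argument: the paper's proof is the one-line observation that Lemma \ref{pullbacknoyau} applies to the composite $Y/W\rightarrow Z/W\rightarrow Z/X$, and your write-up simply fills in the details of that identification (via the Third Isomorphism Property) and of the resulting pullback being the intersection.
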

	\begin{proof}
		This is just \ref{pullbacknoyau} applied to the composite $Y/W\rightarrow Z/W\rightarrow Z/X$.
	\end{proof}
	
	\begin{remark}
		\label{nsubflattice}
		When a monoidal semilattice $L$ is finite, the map $l\mapsto L_l$ is an isomorphism of lattices from $L$ to $\nsub(L)$, with inverse $K\mapsto \max(K)$.
	\end{remark}
	
	\subsection{Examples with monoidal semilattices}
	
	The following example is the first one we constructed. In \cite{PVdL3}, the authors expressed their intuition that the category $\cmon$ should not be DPN, while they proved earlier that it is HSD. This is why we looked for a monoid that does not have cancellation (in the sense that $ab=ac$ does not imply $b=c$), and found what we wanted with a monoidal semilattice.
	
	\begin{example} \label{cmondpn}
		The category $\cmon$ of commutative monoids is homologically self-dual, but dinversion does not preserve normal maps in $\cmon$.
	\end{example}
	
	We explain here how we proceed, before giving the formal proof. As explained just above, we must start with two normal submonoids, say $X$ and $Y$, of a commutative monoid $Z$. Adding pullback, cokernels and pushout, we obtain a ($3\times 3$)-diagram
	\[
		\begin{tikzpicture}
			\node(A) at (0,0){$W$};
			\node(B) at (2,0){$X$};
			\node(C) at (4,0){$X/W$};
			\node(D) at (0,-2){$Y$};
			\node(E) at (2,-2){$Z$};
			\node(F) at (4,-2){$Z/Y$};
			\node(G) at (0,-4){$Y/W$};
			\node(H) at (2,-4){$Z/X$};
			\node(I) at (4,-4){$Z/X\union Y$};
			\node(J) at (0.5,-0.5){$\pullback$};
			\node(K) at (3.5,-3.5){$\pushout$};
			\draw[\normalmono] (A)--(B);
			\draw[\normalepi] (B)--(C);
			\draw[\normalmono] (A)--(D);
			\draw[\normalmono] (B)--(E);
			\draw[\map] (C)--(F);
			\draw[\normalmono] (D)--(E);
			\draw[\normalepi] (E)--(F);
			\draw[\normalepi] (D)--(G);
			\draw[\normalepi] (E)--(H);
			\draw[\normalepi] (F)--(I);
			\draw[\map] (G)--(H);
			\draw[\normalepi] (H)--(I);
		\end{tikzpicture}
	\]
	where $W=X\inter Y$. If we choose $Z$ to be a monoidal semilattice, we know with \ref{nsubflattice} that the top left square identifies a sublattice of $Z$, as follows
	\[
		\begin{tikzpicture}
			\node(A) at (0,0){$X\union Y$};
			\node(B) at (-1,-1){$X$};
			\node(C) at (1,-1){$Y$};
			\node(H) at (0,-2){$W$};
			\node(D) at (0,1){$Z$};
			\node(J) at (-0.3,-0.3){};
			\node(K) at (-0.5,-0.5){};
			\node(L) at (-0.7,-0.7){};
			\node(M) at (-0.7,-1.3){};
			\node(N) at (-0.5,-1.5){};
			\node(P) at (-0.3,-1.7){};
			\node(Q) at (0.3,-1.7){};
			\node(R) at (0.5,-1.5){};
			\node(S) at (0.7,-1.3){};
			\node(T) at (0.7,-0.7){};
			\node(U) at (0.5,-0.5){};
			\node(V) at (0.3,-0.3){};
			\draw[-,>=latex] (A)--(B);
			\draw[-,>=latex] (A)--(C);
			\draw[-,>=latex] (B)--(H);
			\draw[-,>=latex] (C)--(H);
			\draw[-,>=latex] (A)--(D);
			\draw[->,>=latex,dashed] (M)--(V);
			\draw[->,>=latex,dashed] (N)--(U);
			\draw[->,>=latex,dashed] (P)--(T);
			\draw[->,>=latex] (Q)--(L);
			\draw[->,>=latex] (R)--(K);
			\draw[->,>=latex] (S)--(J);
		\end{tikzpicture}
	\]
	By \ref{cokerlattice}, the map $Y/W\rightarrow Z/X$ is the ``projection'' of the interval $[X,W]$ to the interval $[X\union Y,Y]$, as displayed by the dashed arrows, while the map $X/W\rightarrow Z/Y$ is displayed by the plain arrows. Now the condition DPN demands that the first map is a normal monomorphism if and only if the second one is too. Since a normal monomorphism is in particular monic, and in our case an injection, we can adjust our monoidal semilattice $Z$ so that one ``projection'' is not injective while the other one is. See the proof below.
	\begin{proof}
		In \cite[Theorem~2.5.14]{PVdL3}, it is proved that $\cmon$ is homologically self-dual.
		Consider the pentagonal monoidal semilattice $L$
		\[
			\begin{tikzpicture}
				\node(A) at (0,0){$A$};
				\node(B) at (-1,-1){$B$};
				\node(C) at (1,-1.5){$D$};
				\node(E) at (-1,-2){$C$};
				\node(H) at (0,-3){$0$};
				\draw[-,>=latex] (A)--(B);
				\draw[-,>=latex] (A)--(C);
				\draw[-,>=latex] (B)--(E);
				\draw[-,>=latex] (C)--(H);
				\draw[-,>=latex] (E)--(H);
			\end{tikzpicture}
		\]
		With \ref{nsubflattice} we know that $L$ identifies to $\nsub(L)$. We have a pullback diagram
		\[
			\begin{tikzpicture}
				\node(A) at (0,0){$0$};
				\node(B) at (2,0){$B$};
				\node(C) at (0,-2){$D$};
				\node(D) at (2,-2){$A$};
				\node(E) at (0.5,-0.5){$\pullback$};
				\draw[\normalmono] (A)--(B) node[above,midway]{};
				\draw[\normalmono] (C)--(D) node[above,midway]{};
				\draw[\normalmono] (A)--(C) node[left,midway]{};
				\draw[\normalmono] (B)--(D) node[left,midway]{};
			\end{tikzpicture}
		\]
		and we need to find the nature of the maps $D\rightarrow A/B$ and $B\rightarrow A/D$. The first one is an isomorphism, so it is in particular a normal monomorphism, and the second one sends both $B$ and $C$ to $A$, so it is not monic. This shows that the pullback square does not generate a di-extension.
	\end{proof}
	
	In the last proof, we saw that a diagram with normal monomorphisms is pretty much the same drawing as the representation of a lattice (up to a rotation). This will help us to find another required example: a z-exact category that is not HSD. In \cite[Exercise~2.8.9]{PVdL3}, the authors wonder whether $\ses(\X)$ is HSD whenever $\X$ is HSD.
	
	\begin{example}\label{sescmonhsd}
		The category $\ses(\cmon)$ is z-exact but not homologically self-dual.
	\end{example}
	
	Again, we explain how we found our example. Since $\ses(\cmon)$ is equivalent to $\nmono(\cmon)$, we take a totally normal sequence of normal monomorphisms as follows
	\[
		\begin{tikzpicture}
			\node(A) at (0,0){$X'$};
			\node(B) at (2,0){$Y'$};
			\node(C) at (4,0){$Z'$};
			\node(D) at (0,-2){$X$};
			\node(E) at (2,-2){$Y$};
			\node(F) at (4,-2){$Z$};
			\node(G) at (0.5,-0.5){$\pullback$};
			\node(H) at (2.5,-0.5){$\pullback$};
			\draw[\normalmono] (A)--(B);
			\draw[\normalmono] (B)--(C);
			\draw[\normalmono] (A)--(D);
			\draw[\normalmono] (B)--(E);
			\draw[\normalmono] (C)--(F);
			\draw[\normalmono] (D)--(E);
			\draw[\normalmono] (E)--(F);
		\end{tikzpicture}
	\]
	Both squares are pullback squares, and we can start with the following monoidal semilattice
	\[
		\begin{tikzpicture}
			\node(A) at (0,0){$Z$};
			\node(B) at (-1,-1){$Z'$};
			\node(C) at (1,-1){$Y$};
			\node(D) at (0,-2){$Y'$};
			\node(E) at (2,-2){$Z$};
			\node(F) at (1,-3){$Z'$};
			\draw[-,>=latex] (A)--(B);
			\draw[-,>=latex] (A)--(C);
			\draw[-,>=latex] (B)--(D);
			\draw[-,>=latex] (C)--(D);
			\draw[-,>=latex] (C)--(E);
			\draw[-,>=latex] (D)--(F);
			\draw[-,>=latex] (E)--(F);
		\end{tikzpicture}
	\]
	It remains to adjust it for our purpose, see the proof below.
	\begin{proof}
		Consider the pentagonal monoidal semilattice $L\coloneq N_5$
		\[
			\begin{tikzpicture}
				\node(A) at (0,0){$A$};
				\node(B) at (-1,-1){$B$};
				\node(C) at (1,-1.5){$D$};
				\node(E) at (-1,-2){$C$};
				\node(H) at (0,-3){$0$};
				\draw[-,>=latex] (A)--(B);
				\draw[-,>=latex] (A)--(C);
				\draw[-,>=latex] (B)--(E);
				\draw[-,>=latex] (C)--(H);
				\draw[-,>=latex] (E)--(H);
			\end{tikzpicture}
		\]
		With \ref{nsubflattice} we know that $L$ identifies to $\nsub(L)$. We have a totally normal sequence of monomorphisms in $\ses(\cmon)$
		\[(C,0)\normono(B,0)\normono(A,D) \]
		This notation comes from \ref{cokerlattice}.
		We need to understand the map $\gamma\colon (B,0)/(C,0)\rightarrow (A,D)/(C,0)$ and check whether it is a normal monomorphisms of short exact sequences.
		\[
			\begin{tikzpicture}
				\node(A) at (-3,-0.5){$0$};
				\node(B) at (0,0){$0$};
				\node(C) at (-1.5,-1.5){$D$};
				\node(D) at (-3,-3){$C$};
				\node(E) at (0,-2.5){$B$};
				\node(F) at (-1.5,-4){$A$};
				\node(G) at (-3,-5.5){$C$};
				\node(H) at (0,-5){$B$};
				\node(I) at (-1.5,-6.5){$A/D$};
				\node(J) at (3,0){$C/C$};
				\node(K) at (1.5,-1.5){$A/C$};
				\node(L) at (3,-2.5){$B/C$};
				\node(M) at (1.5,-4){$A/C$};
				\node(N) at (3,-5){$B/C$};
				\node(O) at (1.5,-6.5){$A/A$};
				\node(P) at (0,-1.5){};
				\node(Q) at (-1.5,-2.75){};
				\node(R) at (1.5,-2.5){};
				\node(S) at (0,-4){};
				\node(T) at (-1.5,-5.25){};
				\node(U) at (1.5,-5){};
				\draw[double,double distance=1mm] (A)--(B);
				\draw[\normalmono] (A)--(C);
				\draw[\normalmono] (B)--(C);
				\draw[\normalmono] (A)--(D);
				\draw[\normaldemimono] (B)--(P);
				\draw[\map] (P)--(E);
				\draw[\normalmono] (C)--(F);
				\draw[\normaldemimono] (D)--(Q);
				\draw[\map] (Q)--(E);
				\draw[\normalmono] (D)--(F);
				\draw[\normalmono] (E)--(F);
				\draw[double,double distance=1mm] (D)--(G);
				\draw[double,double distance=1mm] (E)--(S);
				\draw[double,double distance=1mm] (S)--(H);
				\draw[\normalepi] (F)--(I);
				\draw[\normaldemimono] (G)--(T);
				\draw[\map] (T)--(H);
				\draw[\map] (G)--(I);
				\draw[\map] (H)--(I);
				\draw[\map] (B)--(J);
				\draw[\map] (C)--(K);
				\draw[\map] (J)--(K);
				\draw[-] (E)--(R);
				\draw[\normalepi] (R)--(L);
				\draw[\normalepi] (F)--(M);
				\draw[\map] (L)--(M);
				\draw[-] (H)--(U);
				\draw[\normalepi] (U)--(N);
				\draw[\normalepi] (I)--(O);
				\draw[\map] (N)--(O);
				\draw[\normalmono] (J)--(L);
				\draw[double,double distance=1mm] (K)--(M);
				\draw[double,double distance=1mm] (L)--(N);
				\draw[\normalepi] (M)--(O);
			\end{tikzpicture}
		\]
		We have the three short exact sequences vertically on the left. To obtain the quotient $(A,D)/(C,0)$, by \ref{kercokerses} we take the cokernels of the respective maps $C\rightarrow A$ and $C\rightarrow A/D$, which are $A/C$ and $A/A$, and add the kernel of the induced map $A/C\rightarrow A/A$, thus we have the short exact sequence $(A/C,A/C)$. We do the same for $(B,0)/(C,0)$, and finally get our map $\gamma$ on the right face of the diagram.
		The top square on the right face is not a pullback, since the intersection of $A/C$ and $B/C$ is just $B/C$, so $\gamma$ is not a normal monomorphism by \ref{kercokerses}.
	\end{proof}
	
	\subsection{Di-exactness}
	
	The isomorphism given in the following lemma appears in what we call the \textit{Second Isomorphism Property}. Hence this theorem can be summarized by the categorical equation
	$$\textrm{di-exact}=\textrm{second iso property}+\textrm{third iso property}$$
	
	\begin{lemma}\label{cnsdiexact}
		Suppose $\X$ is homologically self-dual. $\X$ is di-exact if and only if for all $X\in\X$ and for all $Y$, $Z\in\nsub(X)$, we have $(Y\union Z)/Z\cong Y/(Y\inter Z)$.
	\end{lemma}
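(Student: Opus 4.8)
The plan is to reduce di-exactness to the invertibility of a single canonical morphism. For $X\in\X$ and $Y,Z\in\nsub(X)$, write $\pi_Z\coloneq\Coker(Z\normono X)\colon X\norepi X/Z$ and let $\alpha_{Y,Z}\colon Y\normono X\xrightarrow{\pi_Z}X/Z$ be the associated antinormal composite; let also $\delta_{Y,Z}\colon Y/(Y\inter Z)\to(Y\union Z)/Z$ be the canonical comparison, namely the factorization of $Y\normono Y\union Z\norepi(Y\union Z)/Z$ through $Y\norepi Y/(Y\inter Z)$ — this composite does kill $Y\inter Z$ since $Y\inter Z\leqslant Z$. In the same spirit as the Third Isomorphism Property, and paralleling the classical Second Isomorphism Theorem, I interpret the displayed isomorphism as the assertion that $\delta_{Y,Z}$ is invertible. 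The claim I will prove is then: \emph{assuming $\X$ is HSD, $\X$ is di-exact if and only if $\delta_{Y,Z}$ is an isomorphism for all $X$ and all $Y,Z\in\nsub(X)$}. This gives the lemma, because di-exactness means every antinormal map is normal, and every antinormal map $f=em$ is isomorphic as a morphism to $\alpha_{Y,Z}$ with $X$ the common middle object, $Y$ the normal monomorphism factor, and $Z=\Ker(e)$ (the normal epimorphism $e$ being the cokernel of its kernel, it agrees up to isomorphism with $\pi_Z$).

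First I would compute the kernel and cokernel of $\alpha_{Y,Z}$ for arbitrary $X,Y,Z$. By Lemma \ref{pullbacknoyau} together with the description of $\inter$ in Lemma \ref{uniinter}, $\Ker(\alpha_{Y,Z})$ is $Y\inter Z$ viewed as a normal subobject of $Y$, so $\Coker(\Ker\alpha_{Y,Z})$ is the canonical normal epimorphism $e^{*}\colon Y\norepi Y/(Y\inter Z)$. Dually, Lemma \ref{uniinter} identifies $\Coker(\alpha_{Y,Z})$ with the canonical normal epimorphism $X/Z\norepi X/(Y\union Z)$. Here homological self-duality enters: the canonical morphisms $Z\normono Y\union Z$ and $Y\union Z\normono X$ in $\nsub(X)$ are normal monomorphisms whose composite is the normal monomorphism $Z\normono X$, so $Z\normono Y\union Z\normono X$ is a totally normal sequence, and the Third Isomorphism Property therefore provides a short exact sequence $(Y\union Z)/Z\normono X/Z\norepi X/(Y\union Z)$. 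Consequently $\Ker(\Coker\alpha_{Y,Z})$ is the normal monomorphism $m^{*}\colon(Y\union Z)/Z\normono X/Z$.

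Next I would factor $\alpha_{Y,Z}$ through $e^{*}$ and $m^{*}$. Since $Y\leqslant Y\union Z$ in $\nsub(X)$, the normal monomorphism $Y\normono X$ factors through $Y\union Z$; composing with $\pi_Z$ and using that $\pi_Z$ annihilates $Z\leqslant Y\union Z$, the map $Y\union Z\normono X\xrightarrow{\pi_Z}X/Z$ equals $Y\union Z\norepi(Y\union Z)/Z\xrightarrow{m^{*}}X/Z$ (the left-hand factor being $m^{*}$, again by the Third Isomorphism Property), and the residual map $Y\to(Y\union Z)/Z$ kills $Y\inter Z$, hence factors through $e^{*}$ with comparison map precisely $\delta_{Y,Z}$. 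This yields the identity $\alpha_{Y,Z}=m^{*}\circ\delta_{Y,Z}\circ e^{*}$. If $\delta_{Y,Z}$ is an isomorphism, then $\alpha_{Y,Z}=(m^{*}\delta_{Y,Z})\circ e^{*}$ exhibits $\alpha_{Y,Z}$ as a normal monomorphism followed by a normal epimorphism, so it is normal. Conversely, if $\alpha_{Y,Z}$ is normal, then by the kernel/cokernel computations above its normal decomposition is $(m^{*},e^{*})$; comparing this with $\alpha_{Y,Z}=m^{*}\circ\delta_{Y,Z}\circ e^{*}$ and cancelling the monomorphism $m^{*}$ on the left and the epimorphism $e^{*}$ on the right forces $\delta_{Y,Z}$ to be an isomorphism. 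Quantifying over all $X,Y,Z$ gives $\X$ di-exact $\iff$ every $\alpha_{Y,Z}$ normal $\iff$ every $\delta_{Y,Z}$ an isomorphism, which is the asserted condition.

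The step I expect to be the main obstacle is the identification of $\Coker(\alpha_{Y,Z})$ and of its kernel, and in particular the verification that the normal monomorphism produced by the Third Isomorphism Property is \emph{literally} the $m^{*}$ occurring in a normal decomposition of $\alpha_{Y,Z}$ — this is what makes the formula $\alpha_{Y,Z}=m^{*}\circ\delta_{Y,Z}\circ e^{*}$ legitimately equivalent to ``$\alpha_{Y,Z}$ is normal if and only if $\delta_{Y,Z}$ is an isomorphism''. A minor but necessary point is that the pullback defining $Y\inter Z$ may be computed equally well inside $X$ or inside $Y\union Z$ (because $Y\union Z\normono X$ is monic), so that the kernel of $\alpha_{Y,Z}$ and the kernel of $Y\normono Y\union Z\norepi(Y\union Z)/Z$ are the same normal subobject of $Y$, which is what licenses the use of a single $e^{*}$ and $\delta_{Y,Z}$ throughout.
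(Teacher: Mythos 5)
Your proof is correct and follows essentially the same route as the paper's: both identify an antinormal map with a pair $Y,Z\in\nsub(X)$, compute $\Ker(\alpha_{Y,Z})=Y\inter Z$ and $\Coker(\alpha_{Y,Z})=X/(Y\union Z)$, and use the Third Isomorphism Property to recognize $(Y\union Z)/Z$ as $\Ker(\Coker(\alpha_{Y,Z}))$, so that normality of $\alpha_{Y,Z}$ amounts to invertibility of the comparison $\delta_{Y,Z}$. The paper phrases the final step as the right column of the $(3\times3)$-diagram being short exact while you make the factorization $\alpha_{Y,Z}=m^{*}\delta_{Y,Z}e^{*}$ explicit, and your reading of the stated isomorphism as invertibility of the canonical map is the intended one.
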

	\begin{proof}
		Take $X\in\X$ and $Y$, $Z\in\nsub(X)$. We find a $(3\times 3)$-diagram
		\[
			\begin{tikzpicture}
				\node(A) at (0,0){$Y\inter Z$};
				\node(B) at (2,0){$Y$};
				\node(C) at (4,0){$Y/(Y\inter Z)$};
				\node(D) at (0,-2){$Z$};
				\node(E) at (2,-2){$X$};
				\node(F) at (4,-2){$X/Z$};
				\node(G) at (0,-4){$Z/(Y\inter Z)$};
				\node(H) at (2,-4){$X/Y$};
				\node(I) at (4,-4){$X/(Y\union Z)$};
				\node(J) at (0.5,-0.5){$\pullback$};
				\node(K) at (3.5,-3.5){$\pushout$};
				\draw[\normalmono] (A)--(B);
				\draw[\normalepi] (B)--(C);
				\draw[\normalmono] (A)--(D);
				\draw[\normalmono] (B)--(E);
				\draw[\map] (C)--(F);
				\draw[\normalmono] (D)--(E);
				\draw[\normalepi] (E)--(F);
				\draw[\normalepi] (D)--(G);
				\draw[\normalepi] (E)--(H);
				\draw[\normalepi] (F)--(I);
				\draw[\map] (G)--(H);
				\draw[\normalepi] (H)--(I);
			\end{tikzpicture}
		\]
		The top left square is a pullback and the bottom right square is a pushout, so this is a di-extension if and only if the bottom and right sequences are short exact. But the kernel of $X/Z\norepi X/(Y\union Z)$ is $(Y\union Z)/Z$ by homological self-duality.
	\end{proof}
	
	Actually, we have a characterization of the Second Isomorphism Property in terms of which antinormal maps generate a di-extension:
	
	\renewcommand{\theenumi}{\roman{enumi}}
	
	\begin{proposition}\label{2isoprop}
		Let $\X$ be a z-exact category. Then the following are equivalent:
		\begin{enumerate}
			\item For every $X\in\X$, the Second Isomorphism Property holds in $\nsub(X)$;
			\item Every antinormal map in $\X$ with cokernel $0$ is normal;
			\item Every antinormal map in $\X$ with cokernel $0$ is a normal epimorphism.
		\end{enumerate}
	\end{proposition}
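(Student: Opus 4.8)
The plan is to read condition (iii) and the Second Isomorphism Property as two formulations of a single fact, through the dictionary between antinormal maps and pairs of normal subobjects recalled just before the statement. For an antinormal map $f$ with a chosen antinormal decomposition $f=em$, write $X$ for the common middle object, $Z\coloneq\Ker(e)\in\nsub(X)$ and $Y\coloneq m\in\nsub(X)$, so that $e$ is, up to isomorphism, the canonical map $X\norepi X/Z$ and $f$ is the composite $Y\normono X\norepi X/Z$. By Lemma~\ref{pullbacknoyau}, $\Ker(f)=Y\inter Z$, and the computation in the proof of Lemma~\ref{uniinter} gives $\Coker(f)=X/(Y\union Z)$; hence $\Coker(f)=0$ exactly when $Y\union Z=X$, and conversely every pair $Y,Z\in\nsub(X)$ with $Y\union Z=X$ arises from the antinormal map $(X\norepi X/Z)\circ(Y\normono X)$. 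The equivalence (ii)$\Leftrightarrow$(iii) is then quick: (iii)$\Rightarrow$(ii) is trivial, and for (ii)$\Rightarrow$(iii) one notes that if $f$ is antinormal with $\Coker(f)=0$ and $f=m'e'$ is a normal decomposition, then $m'=\Ker(\Coker(f))$ is the kernel of a zero map, hence an identity, so $f=e'$ is a normal epimorphism.

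The heart of the argument concerns an antinormal $f$ with $\Coker(f)=0$, kept in the notation above (so $Y\union Z=X$). Factoring $f$ through its canonical epimorphism yields $f=\bar f\circ e'$ with $e'=\Coker(\Ker f)=\bigl(Y\norepi Y/(Y\inter Z)\bigr)$ and $\bar f\colon Y/(Y\inter Z)\to X/Z$; since $e'$ is epic, $\Coker(\bar f)=\Coker(f)=0$. The key observation is that $\bar f$ is precisely the canonical comparison morphism $\theta_{Y,Z}\colon Y/(Y\inter Z)\to(Y\union Z)/Z$ whose invertibility is the Second Isomorphism Property for $(Y,Z)$, i.e.\ the isomorphism of Lemma~\ref{cnsdiexact} (here $Y\union Z=X$, so its codomain is $X/Z$): both are the unique factorization through $Y\norepi Y/(Y\inter Z)$ of the arrow $Y\normono Y\union Z\norepi(Y\union Z)/Z$, this factorization existing because $Y\inter Z\leqslant Z$. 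I would then show that three conditions coincide: $f$ is a normal epimorphism; $f$ is normal; $\bar f$ is an isomorphism. If $\bar f$ is an isomorphism, then $f=\bar f\circ e'$ is again a cokernel of $\Ker f$, hence a normal epimorphism; a normal epimorphism is normal; and if $f$ is normal then $\bar f$ is the monomorphic part of a normal decomposition (by uniqueness of the factorization through $e'$), hence a normal monomorphism, hence an identity since its cokernel vanishes. So for maps of zero cokernel, ``normal'', ``normal epi'' and ``$\theta_{Y,Z}$ invertible'' are interchangeable.

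It remains to close the loop with (i). The dictionary of the first paragraph together with ``$f$ normal epi $\Leftrightarrow$ $\theta_{Y,Z}$ invertible'' immediately yields (i)$\Rightarrow$(iii). For (iii)$\Rightarrow$(i), let $X\in\X$ and $Y,Z\in\nsub(X)$ be arbitrary and put $W\coloneq Y\union Z$. By Lemma~\ref{normonocomp}, $Y$ and $Z$ are normal subobjects of $W$; pullbacks are absolute along the monomorphism $W\normono X$, and unions of normal subobjects are absolute by the earlier proposition comparing coproducts in $\nsub(X')$ and $\nsub(X)$, so $Y\inter Z$, $Y\union Z$ and hence $\theta_{Y,Z}$ are computed identically inside $\nsub(W)$ and inside $\nsub(X)$. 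But in $\nsub(W)$ the union of $Y$ and $Z$ is the top element $W$, so the associated antinormal map $(W\norepi W/Z)\circ(Y\normono W)$ has cokernel $W/(Y\union Z)=W/W=0$; by (iii) it is a normal epimorphism, whence $\theta_{Y,Z}$ is an isomorphism, i.e.\ the Second Isomorphism Property holds for $(Y,Z)$.

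The purely routine verifications are that $Y\normono W$ and $Z\normono W$ are normal monomorphisms and that intersections and unions of normal subobjects do not change when the ambient object is replaced by a normal subobject containing them. The step that really carries the proof — and the one I expect to require the most care — is the identification of the canonical factorization map $\bar f$ of an antinormal map of zero cokernel with the Second-Isomorphism-Property comparison morphism $\theta_{Y,Z}$, together with the resulting collapse of ``normal'', ``normal epimorphism'' and ``$\theta_{Y,Z}$ an isomorphism'' into a single condition; this is exactly what makes the lattice-theoretic statement (i) and the factorization statements (ii) and (iii) one and the same.
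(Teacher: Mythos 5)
Your proof is correct and follows essentially the same route as the paper: both translate an antinormal map of zero cokernel into a pair $Y,Z\in\nsub(X)$ with $Y\union Z=X$ and identify the factorization of $f$ through $\Coker(\Ker f)=\bigl(Y\norepi Y/(Y\inter Z)\bigr)$ with the Second-Isomorphism comparison map to $(Y\union Z)/Z$. The only differences are organizational (you handle (ii)$\Leftrightarrow$(iii) separately and are more explicit about the stability of $\inter$ and $\union$ under restriction to $W=Y\union Z$, which the paper leaves implicit).
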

	\begin{proof}
		(i)$\implies$(iii) Let $f$ be an antinormal map in $\X$, with an antinormal decomposition $Y\normono X\norepi X/Z$, with $Y$ and $Z$ two normal subobjects of $X$. This yields a ($3\times 3$)-diagram
		\[
			\begin{tikzpicture}
				\node(A) at (0,0){$Y\inter Z$};
				\node(B) at (2,0){$Y$};
				\node(C) at (4,0){$Y/(Y\inter Z)$};
				\node(D) at (0,-2){$Z$};
				\node(E) at (2,-2){$X$};
				\node(F) at (4,-2){$X/Z$};
				\node(G) at (0,-4){$Z/(Y\inter Z)$};
				\node(H) at (2,-4){$X/Y$};
				\node(I) at (4,-4){$X/(Y\union Z)$};
				\node(J) at (0.5,-0.5){$\pullback$};
				\node(K) at (3.5,-3.5){$\pushout$};
				\draw[\normalmono] (A)--(B);
				\draw[\normalepi] (B)--(C)  node[above,midway]{$e$};
				\draw[\normalmono] (A)--(D);
				\draw[\normalmono] (B)--(E);
				\draw[\map] (C)--(F);
				\draw[\normalmono] (D)--(E);
				\draw[\normalepi] (E)--(F);
				\draw[\normalepi] (D)--(G);
				\draw[\normalepi] (E)--(H);
				\draw[\normalepi] (F)--(I)  node[right,midway]{$\Coker f$};
				\draw[\map] (G)--(H);
				\draw[\normalepi] (H)--(I);
				\draw[\map] (B)--(F)  node[above right,midway]{$f$};
			\end{tikzpicture}
		\]
		If $\Coker(f)=0$, it means that $X=Y\union Z$. Since $f$ is the composite $Y\norepi Y/(Y\inter Z)\rightarrow (Y\union Z)/Z$, $f$ is a normal epimorphism because $Y/(Y\inter Z)\rightarrow (Y\union Z)/Z$ is an isomorphism.
		
		(iii)$\implies$(ii) Is trivial.
		
		(ii)$\implies$(i) Let $X$ be an object of $\X$, and $Y$, $Z$ two normal subobjects of $X$. We have an antinormal map $f\colon Y\normono Y\union Z\norepi (Y\union Z)/Z$. Since $\Coker f=(Y\union Z)/(Y\union Z)=0$, the map $f$ is normal, and $\Ker(\Coker f)$ is an iso. Since $\Coker(\Ker f)=Y/(Y\inter Z)$, we obtain $Y/(Y\inter Z)\cong(Y\union Z)/Z$.
	\end{proof}
	
	We have a dual version of Proposition \ref{2isoprop}:
	
	\begin{proposition}
		Let $\X$ be a z-exact category. Then the following are equivalent:
		\begin{enumerate}
			\item For every $X\in\X$, the dual of the Second Isomorphism Property holds in $\nsub(X)$: for all $Y$, $Z\in\nsub(X)$ the respective kernels of $X/(Y\inter Z)\rightarrow X/Z$ and $X/Y\rightarrow X/(Y\union Z)$ are isomorphic;
			\item Every antinormal map in $\X$ with kernel $0$ is normal;
			\item Every antinormal map in $\X$ with kernel $0$ is a normal monomorphism.\noproof
		\end{enumerate}
	\end{proposition}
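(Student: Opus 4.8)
The plan is to deduce this proposition from Proposition~\ref{2isoprop} by passing to the opposite category. First I would note that $\X^{\mathrm{op}}$ is again z-exact, since the defining requirement --- admitting all kernels and all cokernels --- is self-dual. Reversing every arrow sends normal monomorphisms to normal epimorphisms (and back), turns an antinormal decomposition $f=em$ into the antinormal decomposition $f^{\mathrm{op}}=m^{\mathrm{op}}e^{\mathrm{op}}$, so that ``antinormal'' and ``normal'' are self-dual notions, and exchanges ``kernel $0$'' with ``cokernel $0$''. Granting this, conditions (ii) and (iii) of the present statement are literally conditions (ii) and (iii) of Proposition~\ref{2isoprop} read in $\X^{\mathrm{op}}$, and (iii)$\implies$(ii) is trivial exactly as there.

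The point that needs an argument is that condition (i) is the translation of ``for every $X$, the Second Isomorphism Property holds in $\nsub_{\X^{\mathrm{op}}}(X)$''. I would set up the dictionary between the two categories of normal subobjects: the normal subobjects of $X$ in $\X^{\mathrm{op}}$ are the normal quotients $X\norepi Q$ of $X$ in $\X$, and $S\mapsto X/S$ is an anti-isomorphism of lattices $\nsub_{\X}(X)\to\nsub_{\X^{\mathrm{op}}}(X)$ with inverse $\pi\mapsto\Ker\pi$; this rests on the identities $\Ker(\Coker\kappa)=\kappa$ and $\Coker(\Ker\pi)=\pi$ recalled in the excerpt, together with Lemma~\ref{normonocomp} for order-reflection. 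Any anti-isomorphism of lattices swaps $\wedge$ and $\vee$, so the meet of $\nsub_{\X^{\mathrm{op}}}(X)$ corresponds to $\union$ and its join to $\inter$, while for $A\leqslant B$ in $\nsub_{\X^{\mathrm{op}}}(X)$ the quotient $B/A$ --- the cokernel in $\X^{\mathrm{op}}$ of the normal monomorphism $A\normono B$, that is, the kernel in $\X$ of the normal epimorphism $B\norepi A$ (which is a normal epimorphism by Lemma~\ref{normonocomp}) --- is computed inside $\X$. Writing $P=X/Y$ and $Q=X/Z$, one then gets $P\union Q=X/(Y\inter Z)$ and $P\inter Q=X/(Y\union Z)$ in $\nsub_{\X^{\mathrm{op}}}(X)$, hence $(P\union Q)/Q=\Ker\bigl(X/(Y\inter Z)\to X/Z\bigr)$ and $P/(P\inter Q)=\Ker\bigl(X/Y\to X/(Y\union Z)\bigr)$; so the Second Isomorphism Property in $\nsub_{\X^{\mathrm{op}}}(X)$ says exactly that these two kernels are isomorphic, which is (i). Applying Proposition~\ref{2isoprop} to $\X^{\mathrm{op}}$ and reading everything back through this dictionary finishes the argument.

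I do not expect a genuine obstacle: the proof is bookkeeping, and the one place to be careful is the dictionary above, specifically the claim that the ``quotient'' operation of $\nsub_{\X^{\mathrm{op}}}(X)$ is nothing but the kernel functor of $\X$, so that the abstract Second Isomorphism Property in the opposite really unfolds into the concrete statement in (i). If one prefers to stay inside $\X$, the alternative is to mimic the three implications in the proof of Proposition~\ref{2isoprop} with all arrows reversed (pullbacks becoming pushouts, kernels becoming cokernels, and the roles of $Y$ and $Z$ swapped where needed); there the slightly delicate step is the dual of (i)$\implies$(iii), where for an antinormal $f\colon Y\normono X\norepi X/Z$ with $Y\inter Z=0$ one must check that the canonical comparison map $Y\to\Ker\bigl(X/Z\norepi X/(Y\union Z)\bigr)$ induced by $f$ is precisely the isomorphism provided by condition (i).
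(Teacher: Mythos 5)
Your proposal is correct and matches the paper's (implicit) argument: the paper states this proposition with no proof precisely because it is the formal dual of Proposition \ref{2isoprop}, and your careful unfolding of the duality --- $\X^{\mathrm{op}}$ is z-exact, antinormality and normality are self-dual, and $S\mapsto X/S$ is a lattice anti-isomorphism under which the quotient operation of $\nsub_{\X^{\mathrm{op}}}(X)$ becomes the kernel functor of $\X$ --- is exactly the bookkeeping the paper leaves to the reader. No gaps.
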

	
	\begin{theorem}\label{dexmod}
		If $\X$ is di-exact, then for every $X\in\X$, $\nsub(X)$ is modular.
	\end{theorem}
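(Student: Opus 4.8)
The plan is to argue by contradiction via the pentagon criterion for modularity (Theorem~\ref{thmlat}). Suppose $\nsub(X)$ is not modular; then it contains a pentagon $N(1,a,b,c,0)$ as a sublattice, so there are normal subobjects $0<c<1$ and $0<b<a<1$ of $X$ with
\[
a\inter c=b\inter c=0,\qquad a\union c=b\union c=1,
\]
where $0$ and $1$ denote the bottom and top of the pentagon and the meets and joins are computed in $\nsub(X)$. Write $w$ for the normal subobject $a\inter c=b\inter c$ of $X$ --- which in general is \emph{not} the zero object --- and $\iota\colon b\normono a$ for the inclusion. The goal is to derive $b=a$, contradicting $b\neq a$ in the pentagon.

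The essential input is that di-exactness supplies the Second Isomorphism Property. Indeed, if $f$ is antinormal with $\Coker f=0$, then $f$ is normal by di-exactness, so $f=\Ker(\Coker f)\circ\Coker(\Ker f)=\id\circ\Coker(\Ker f)$ is a normal epimorphism; this is one of the equivalent conditions of Proposition~\ref{2isoprop}. Apply this to the antinormal composites
\[
p_a\colon a\normono 1\norepi 1/c\qquad\text{and}\qquad p_b\colon b\normono 1\norepi 1/c .
\]
By Lemmas~\ref{pullbacknoyau} and~\ref{uniinter}, $\Ker p_a=a\inter c=w$ and $\Coker p_a=1/(a\union c)=0$, and likewise $\Ker p_b=w$ and $\Coker p_b=0$; hence $p_a$ and $p_b$ are normal epimorphisms. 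Since a normal epimorphism is the cokernel of its kernel, $p_a$ is canonically identified (over $a$) with the projection $a\norepi a/w$, and $p_b$ with $b\norepi b/w$.

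Now $p_b=p_a\circ\iota$, because in $\nsub(X)$ the unique morphism $b\to 1$ factors through $a$, so $(b\normono 1)=(a\normono 1)\circ\iota$, and post-composing with $1\norepi 1/c$ gives the equality. Taking cokernels and using the dual of Lemma~\ref{pullbacknoyau}, $\Coker p_b$ is the pushout of $\Coker\iota=a/b$ along $p_a$; since $p_a$ is the projection $a\norepi a/w$ and $w\leqslant b$, this pushout is $a/(b\union w)=a/b$ by Lemma~\ref{uniinter}. But $p_b$ is a normal epimorphism, so $\Coker p_b=0$. Hence $a/b=\Coker(b\normono a)=0$, and therefore $b=\Ker(\Coker(b\normono a))=\Ker(a\to 0)=a$. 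This contradiction shows that $\nsub(X)$ contains no pentagon, so it is modular by Theorem~\ref{thmlat}.

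The step I expect to require the most attention is the bookkeeping around $w$: because the bottom of the pentagon is a genuine, generally nonzero, normal subobject of $X$, one must not treat the pentagon as literally sitting inside $\nsub(X)$ with bottom $0_\X$; the identifications $p_a\cong(a\norepi a/w)$, the pushout computed as $a/(b\union w)$, and the passage from ``$\Coker$ is $0$'' to equality of subobjects all have to be carried out over the base $w$. Everything else is a direct assembly of Lemmas~\ref{normonocomp}, \ref{pullbacknoyau} and~\ref{uniinter} with the basic $\Ker$--$\Coker$ facts for normal maps and Proposition~\ref{2isoprop}. An alternative route is to verify the interval-isomorphism characterization of modularity (Theorem~\ref{cnsmod}) by showing that $t\mapsto t\union y$ and $u\mapsto u\inter x$ are mutually inverse between $[x\inter y,x]$ and $[y,x\union y]$, again using the Second Isomorphism Property; this works but needs the correspondence between normal subobjects along a normal epimorphism, which is slightly more than the pentagon argument requires.
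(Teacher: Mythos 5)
Your argument is correct, but it takes a genuinely different route from the paper's. The paper does not exclude a pentagon: it combines the Second Isomorphism Property (Lemma~\ref{cnsdiexact}) with the interval-isomorphism characterization of modularity (Theorem~\ref{cnsmod}), directly verifying $T=Y\inter(T\union Z)$ for $Y\inter Z\leqslant T\leqslant Y$ (and dually $U=(Y\inter U)\union Z$) by comparing the two short exact sequences $T/(Y\inter Z)\normono Y/(Y\inter Z)\norepi Y/T$ and $(T\union Z)/Z\normono (Y\union Z)/Z\norepi (Y\union Z)/(T\union Z)$, whose left and middle terms the Second Isomorphism Property identifies; this is essentially the ``alternative route'' you sketch in your closing paragraph. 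Both proofs rest on the same homological input --- di-exactness yields the Second Isomorphism Property, which you extract in the form of Proposition~\ref{2isoprop}(iii) --- but the lattice-theoretic mechanisms differ: your pentagon argument via Theorem~\ref{thmlat} is more economical, needing only that the two antinormal composites $p_a$ and $p_b$ are normal epimorphisms together with the cokernel-of-a-composite Lemma~\ref{pullbacknoyau}, while the paper's version proves slightly more along the way, namely that the interval maps $t\mapsto t\union Z$ are genuine isomorphisms. Your handling of the base $w$ is sound: the identifications $\Ker p_a=a\inter c$ and $p_a\cong(a\norepi a/w)$, the computation $\Coker p_b=a/(b\union w)=a/b$, and the final step $a/b=0\Rightarrow b=\Ker(\Coker(b\normono a))=a$ are all justified by results stated in the paper; note only that, exactly as in the paper's own proofs, you implicitly use that the pushout $Q$ of Lemma~\ref{uniinter} is $X/(Y\union Z)$, i.e.\ that $X\to Q$ is a normal epimorphism with kernel $Y\union Z$, which the paper asserts in the proof of that lemma, so you are on the same footing.
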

	\begin{proof}
		We have to combine \ref{cnsdiexact} and \ref{cnsmod}. For $T\in\nsub(X)$ such that $Y\inter Z\leqslant T\leqslant Y$, by homological self-duality, we have two short exact sequences
		\[
			T/(Y\inter Z)\normono Y/(Y\inter Z)\norepi Y/T
		\]
		and
		\[
			(T\union Z)/Z\normono (Y\union Z)/Z\norepi (Y\union Z)/(T\union Z)
		\]
		We already know that their middle terms are isomorphic. Moreover, since $T\inter Z=Y\inter Z$, we have $T/(Y\inter Z)=T/(T\inter Z)\cong(T\union Z)/Z$ so the left terms are also isomorphic. This proves that the right terms are isomorphic too. Since $(Y\union Z)/(T\union Z)=(Y\union T\union Z)/(T\union Z)\cong Y/(Y\inter(T\union Z))$, we finally obtain the equality $T=Y\inter(T\union Z)$. We prove similarly that $U=(Y\inter U)\union Z$ for $U\in \nsub(X)$ such that $Z\leqslant U\leqslant Y\union Z$ to conclude.
	\end{proof}
	
	\begin{theorem} \label{dexses}
		If $\X$ is di-exact, then $\ses(\X)$ is di-exact if and only if $\nsub(X)$ is distributive for every $X\in\X$.
	\end{theorem}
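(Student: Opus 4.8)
The plan is to apply Lemma~\ref{cnsdiexact} to the category $\ses(\X)$ and then to translate the resulting Second Isomorphism Property into a distributive law in the lattices $\nsub(X)$, $X\in\X$. Since $\X$ is di-exact it is homologically self-dual, and by Theorem~\ref{dexmod} every $\nsub(X)$ is modular, so $\ses(\X)$ is itself homologically self-dual by the inheritance result~\ref{hsdses}. Lemma~\ref{cnsdiexact} therefore reduces di-exactness of $\ses(\X)$ to the assertion that for every $S\in\ses(\X)$ and all $Y,Z\in\nsub(S)$ the canonical comparison $c\colon Y/(Y\inter Z)\to(Y\union Z)/Z$ is an isomorphism in $\ses(\X)$. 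Write $S=(A\normono B\norepi C)$. By Proposition~\ref{kercokerses} a normal subobject of $S$ is exactly a short exact sequence $(A\inter B'\normono B'\norepi B'/(A\inter B'))$ for some $B'\in\nsub(B)$, and passing to the middle term gives an isomorphism of lattices $\nsub(S)\cong\nsub(B)$ --- the order is detected on the middle component by Lemma~\ref{normonocomp}, and $\inter$, $\union$ are likewise computed there by Lemma~\ref{uniinter}. Let $B_Y,B_Z\in\nsub(B)$ be the middle terms of $Y,Z$.

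Next I would compute $c$ componentwise. Using the description of cokernels in $\ses(\X)$ from Proposition~\ref{kercokerses} together with the Third Isomorphism Property (valid since $\X$ is homologically self-dual), one finds that $Y/(Y\inter Z)$ has middle term $B_Y/(B_Y\inter B_Z)$ and left term $((B_Y\inter B_Z)\union(A\inter B_Y))/(B_Y\inter B_Z)$, and that $(Y\union Z)/Z$ has middle term $(B_Y\union B_Z)/B_Z$ and left term $(B_Z\union(A\inter(B_Y\union B_Z)))/B_Z$. The middle component of $c$ is the canonical Second Isomorphism comparison in $\nsub(B)$, which is an isomorphism because $\X$ is di-exact (Lemma~\ref{cnsdiexact}). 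A morphism of short exact sequences whose middle component is invertible is invertible precisely when its left component is; hence $c$ is an isomorphism if and only if, after transport along the isomorphism $B_Y/(B_Y\inter B_Z)\cong(B_Y\union B_Z)/B_Z$, the two left terms agree as normal subobjects. That isomorphism of quotients carries the normal subobject $t/(B_Y\inter B_Z)$ to $(t\union B_Z)/B_Z$ for $B_Y\inter B_Z\leqslant t\leqslant B_Y$ --- this is the modular interval isomorphism of Theorem~\ref{cnsmod} --- so the left term of $Y/(Y\inter Z)$ is carried to $((A\inter B_Y)\union B_Z)/B_Z$. Therefore $c$ is an isomorphism if and only if $(A\inter B_Y)\union B_Z=(A\inter(B_Y\union B_Z))\union B_Z$ in $\nsub(B)$.

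Finally I would do the lattice reduction. For fixed $A,B_Y,B_Z$ in the modular lattice $\nsub(B)$, the identity $(A\inter B_Y)\union B_Z=(A\inter(B_Y\union B_Z))\union B_Z$ is equivalent to the distributive identity $A\inter(B_Y\union B_Z)=(A\inter B_Y)\union(A\inter B_Z)$: one implication is immediate, and for the converse one meets both sides with $A\inter(B_Y\union B_Z)$ and applies the modular law. As $S$ ranges over the short exact sequences with middle term $B$ its left term $A$ ranges over all of $\nsub(B)$ --- take $S=(A\normono B\norepi B/A)$ --- while $B_Y,B_Z$ range over all of $\nsub(B)$; hence $c$ is an isomorphism for every choice of $S$, $Y$, $Z$ if and only if the distributive identity holds for every triple in every lattice $\nsub(B)$, that is, if and only if $\nsub(X)$ is distributive for every $X\in\X$. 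Combined with the reduction in the first paragraph, this proves the theorem.

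The main obstacle is the componentwise identification of $c$: computing the left terms of the two quotients in $\ses(\X)$ by means of Proposition~\ref{kercokerses} and the Third Isomorphism Property, and then transporting one of them correctly along the Second Isomorphism comparison via the modular interval isomorphism of Theorem~\ref{cnsmod}, so that invertibility of $c$ collapses exactly onto the distributive law. The remaining steps --- the description of $\nsub(S)$, the lattice identity, and the range of $A$ --- are routine.
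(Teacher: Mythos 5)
There is one genuine gap, and it sits at the very first step. To apply Lemma~\ref{cnsdiexact} to $\ses(\X)$ in the direction ``distributivity $\Rightarrow$ di-exactness'' you must first know that $\ses(\X)$ is homologically self-dual, and you obtain this by citing Theorem~\ref{hsdses}. But Theorem~\ref{hsdses} has \emph{regularity} of $\X$ among its hypotheses (its proof goes through Proposition~\ref{propnsub}(4) and Lemma~\ref{hsdstablenepi}, both of which use regularity essentially), and regularity is not assumed in Theorem~\ref{dexses}. The fact you need is true --- $\ses(\X)$ is even DPN whenever $\X$ is di-exact, which is Proposition~\ref{dexsesdpn} --- but that is a separate, non-trivial result whose proof does not come for free from what you have set up, so as written your reduction is not justified. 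Note that the forward direction is unaffected, since di-exactness of $\ses(\X)$ implies its homological self-duality outright. The paper avoids this issue entirely by not passing through Lemma~\ref{cnsdiexact} at the level of $\ses(\X)$: it takes an arbitrary antinormal map of short exact sequences, i.e.\ a pullback square of normal monomorphisms \eqref{pbses}, and tests directly whether the induced comparison $\Gamma\colon EF/AB\to GH/CD$ is a normal monomorphism, which requires no prior knowledge about $\ses(\X)$.

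Apart from this, your computation is essentially sound and runs parallel to the paper's own: the identification $\nsub(S)\cong\nsub(B)$ with left terms of the form $A\inter B'$ matches the computations in Proposition~\ref{dexsesn}; the componentwise description of the two quotients via Proposition~\ref{kercokerses}, Proposition~\ref{cokersquare} and the Third Isomorphism Property is correct; the transport of the left term along the Second Isomorphism comparison (which is the content of the interval isomorphism inside the proof of Theorem~\ref{dexmod}) is the right mechanism; and the final lattice reduction of $(A\inter B_Y)\union B_Z=(A\inter(B_Y\union B_Z))\union B_Z$ to distributivity, using modularity from Theorem~\ref{dexmod}, is exactly the identity $B\union E=(F\inter D)\union(F\inter G)$ versus $F\inter(D\union G)$ that the paper extracts. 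So the argument would be complete if you either proved the homological self-duality of $\ses(\X)$ separately (along the lines of Proposition~\ref{dexsesdpn}) or recast the backward direction as a direct verification that every antinormal map of short exact sequences is normal.
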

	\begin{proof}
		For a normal monomorphism $X\normono Y$ in $\X$, we write $XY$ for the short exact sequence $X\normono Y\norepi Y/X$. Suppose then that we have a pullback diagram in $\ses(\X)$
		\begin{equation}\label{pbses}
			\begin{tikzpicture}
				\node(A) at (0,0){$AB$};
				\node(B) at (2,0){$CD$};
				\node(C) at (0,-2){$EF$};
				\node(D) at (2,-2){$GH$};
				\node(E) at (0.5,-0.5){$\pullback$};
				\draw[\normalmono] (A)--(B) node[above,midway]{};
				\draw[\normalmono] (C)--(D) node[above,midway]{};
				\draw[\normalmono] (A)--(C) node[left,midway]{};
				\draw[\normalmono] (B)--(D) node[left,midway]{};
			\end{tikzpicture}
		\end{equation}
		Since every map is a normal monomorphism, by composition of pullbacks we know that $A$, $B$, $C$, $D$, $E$, $F$ and $G$ are normal subobjects of $H$, and that the identities $B=D\inter F$, $C=D\inter G$, $E=F\inter G$ and $A=B\inter C=B\inter E=C\inter E$ hold. We need to find a condition for $\Gamma\colon EF/AB\rightarrow GH/CD$ to be a normal monomorphism. This morphism is
		\[
			\begin{tikzpicture}
				\node(A) at (0,0){$(B\union E)/B$};
				\node(B) at (2.5,0){$F/B$};
				\node(C) at (5,0){$F/(B\union E)$};
				\node(D) at (0,-2){$(D\union G)/D$};
				\node(E) at (2.5,-2){$H/D$};
				\node(F) at (5,-2){$H/(D\union G)$};
				\draw[\normalmono] (A)--(B) node[above,midway]{};
				\draw[\normalepi] (B)--(C) node[above,midway]{};
				\draw[\map] (A)--(D) node[left,midway]{$\gamma$};
				\draw[\map] (B)--(E) node[left,midway]{$\gamma'$};
				\draw[\map] (C)--(F) node[left,midway]{$\gamma''$};
				\draw[\normalmono] (D)--(E) node[below,midway]{};
				\draw[\normalepi] (E)--(F) node[below,midway]{};
			\end{tikzpicture}
		\]
		Since $F/B=F/(D\inter F)\cong (D\union F)/D$, we know that $\gamma'$ is a normal monomorphism. By \ref{pullbacknormono}, if the left-hand square is a pullback, then the left-hand square is a pullback of normal monomorphisms in $\X$, and since $\X$ is di-exact, it means that $\gamma''$ is a normal monomorphism. Reciprocally, if $\gamma''$ is a normal monomorphism, Proposition \ref{pullbackreco} shows that the left-hand square is a pullback. Hence, $\Gamma$ is a normal monomorphism if and only if $\gamma''$ is a normal monomorphism, which is equivalent to $\gamma''=\Ker(\Coker(\gamma''))=\Ker(H/(D\union G)\norepi H/(D\union F\union G))$, so we want $F/(B\union E)\cong (D\union F\union G)/(D\union G)$. But $(D\union F\union G)/(D\union G)\cong F/(F\inter(D\union G))$ and $B\union E=(F\inter D)\union (F\inter G)$, proving that the original pullback in $\ses(\X)$ generates a di-extension if and only if $\nsub(H)$ is distributive.
	\end{proof}
	
	\begin{remark}\label{remdexses}
		Some partial reason for distributivity to appear is that the initial pullback diagram \eqref{pbses} consists of the data of the three normal subobjects $D$, $F$, $G$ of the object $H$; the other objects are just the intersection. We observe something similar in \ref{hsdses}.
	\end{remark}
	
	\begin{proposition}\label{dexsesdpn}
		If $\X$ is di-exact, then dinversion preserves normal maps in $\ses(\X)$.
	\end{proposition}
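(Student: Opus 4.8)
The plan is to reduce \textbf{DPN} for $\ses(\X)$ to a single, manifestly symmetric identity in the lattices $\nsub(H)$, $H\in\X$, and then to observe that this identity is automatically self-symmetric. First I would record the two consequences of di-exactness that the reduction relies on: by Theorem~\ref{dexmod} each $\nsub(T)$ is modular, hence $\ses(\X)$ is homologically self-dual (this is~\ref{hsdses} in the modular case) and in particular z-exact, so asking whether $\ses(\X)$ is DPN makes sense; and by Lemma~\ref{cnsdiexact} the Second Isomorphism Property $Y/(Y\inter Z)\cong(Y\union Z)/Z$ holds for \emph{every} $H\in\X$ and all $Y,Z\in\nsub(H)$.

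Next I would set up the reduction. As recalled in Subsection~\ref{subsec4.1}, an antinormal map in the z-exact category $\ses(\X)$ is the datum of a central object $GH\in\ses(\X)$ together with an ordered pair $(\mathcal Y,\mathcal Z)$ of normal subobjects of $GH$. Extending the analysis behind Proposition~\ref{2isoprop} from cokernel-zero maps to arbitrary antinormal maps — which uses homological self-duality of $\ses(\X)$ — such a map is normal exactly when the Second Isomorphism Property holds for $(\mathcal Y,\mathcal Z)$, and the diagram of the form~\eqref{diex} it generates in $\ses(\X)$ is a di-extension exactly when the Second Isomorphism Property holds both for $(\mathcal Y,\mathcal Z)$ and for $(\mathcal Z,\mathcal Y)$, the latter pair being the datum of the dinverse. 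Hence DPN for $\ses(\X)$ is equivalent to: for every $GH\in\ses(\X)$ and all $\mathcal Y,\mathcal Z\in\nsub(GH)$, the Second Isomorphism Property for $(\mathcal Y,\mathcal Z)$ holds if and only if it holds for $(\mathcal Z,\mathcal Y)$. Moreover, by~\ref{kercokerses} a normal monomorphism into $GH$ is precisely a pullback square of normal monomorphisms over $G\normono H$, so sending a normal subobject of $GH$ to its ``$H$-component'' is an isomorphism of lattices $\nsub(GH)\cong\nsub(H)$, with inverse $Y\mapsto\bigl((Y\inter G)\normono Y\bigr)$; write $Y,Z\in\nsub(H)$ for the images of $\mathcal Y,\mathcal Z$.

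The core of the argument is to identify, componentwise, the canonical comparison morphism
\[
\mathcal Y/(\mathcal Y\inter\mathcal Z)\longrightarrow(\mathcal Y\union\mathcal Z)/\mathcal Z
\]
in $\ses(\X)$: I would compute the relevant quotients of short exact sequences via~\ref{kercokerses} and then simplify using homological self-duality together with the (now available) Second Isomorphism Property inside $\nsub(H)$. The middle component comes out to be the canonical map $Y/(Y\inter Z)\to(Y\union Z)/Z$, which is an isomorphism by Lemma~\ref{cnsdiexact}; and the subobject component comes out, after the simplification, to be the canonical normal monomorphism
\[
\bigl((Y\inter G)\union(Z\inter G)\bigr)\big/(Z\inter G)\;\normono\;\bigl((Y\union Z)\inter G\bigr)\big/(Z\inter G),
\]
which is an isomorphism if and only if $(Y\inter G)\union(Z\inter G)=(Y\union Z)\inter G$ in $\nsub(H)$. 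Since a morphism of short exact sequences is invertible as soon as two of its three components are (a Short $5$-Lemma argument), the comparison morphism is an isomorphism — equivalently, the Second Isomorphism Property holds for $(\mathcal Y,\mathcal Z)$ — exactly when this lattice identity holds. As the identity is symmetric in $Y$ and $Z$, it is equally the criterion for $(\mathcal Z,\mathcal Y)$, which completes the argument by the reduction above.

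I expect the main obstacle to be that core computation: threading the quotients of short exact sequences correctly through~\ref{kercokerses} and checking that, after invoking the Second Isomorphism Property in $\nsub(H)$, the subobject component really collapses to the displayed normal monomorphism between subquotients of $H$ — in particular that the join occurring there is the one of $\nsub(H)$, and that one may legitimately replace $(Y\inter G)/(Y\inter Z\inter G)$ by $\bigl((Y\inter G)\union(Z\inter G)\bigr)/(Z\inter G)$. As a consistency check, running the same computation over \emph{all} pairs shows that $\ses(\X)$ is di-exact if and only if every $\nsub(H)$ satisfies this identity for all triples, i.e.\ is distributive, recovering Theorem~\ref{dexses}.
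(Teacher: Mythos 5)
Your strategy---reduce DPN for $\ses(\X)$ to a single lattice identity in $\nsub(H)$ that is manifestly symmetric in the two normal subobjects---is genuinely different from the paper's proof, which instead fixes the pair, computes the cokernel of $\lambda$ explicitly as $G/((G\inter D)\union(F\inter G))$ using the Second Isomorphism Property and modularity, observes that this expression is symmetric in $D$ and $F$, and concludes via Lemma~\ref{pullbackreco} that the dinverse comparison is a normal monomorphism. Your route would be elegant if it worked, but there is a genuine gap at its very first step. The reduction ``the antinormal map of $\ses(\X)$ determined by $(\mathcal Y,\mathcal Z)$ is normal exactly when $\mathcal Y/(\mathcal Y\inter\mathcal Z)\to(\mathcal Y\union\mathcal Z)/\mathcal Z$ is an isomorphism'' requires identifying $\Ker(\Coker)$ of that map with $(\mathcal Y\union\mathcal Z)/\mathcal Z$, i.e.\ it requires $\ses(\X)$ to be homologically self-dual (this is precisely how Lemma~\ref{cnsdiexact} uses HSD). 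You justify HSD of $\ses(\X)$ by Theorem~\ref{hsdses}, but that theorem assumes $\X$ is \emph{regular} in addition to HSD, and di-exactness does not imply regularity; the same missing hypothesis blocks your implicit use of Proposition~\ref{propnsub}(iv) to identify intersections of quotients with quotients of intersections in the componentwise computation. Worse, HSD of $\ses(\X)$ is exactly the $\alpha=0$ instance of the proposition you are proving (DPN implies HSD), so as written the argument presupposes a special case of its own conclusion without an independent proof. To repair it you would need either to prove HSD of $\ses(\X)$ directly from di-exactness of $\X$ by computing the relevant kernels in $\ses(\X)$ with Proposition~\ref{kercokerses} (essentially redoing the computation of Theorem~\ref{dexses}), or to bypass the Second Isomorphism Property entirely and compute $\Ker(\Coker)$ componentwise from scratch; the paper avoids the issue by never needing any global property of $\ses(\X)$ beyond z-exactness.

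A secondary point worth flagging: the identity you extract, $(Y\inter G)\union(Z\inter G)=(Y\union Z)\inter G$, is a \emph{different} instance of distributivity from the one Theorem~\ref{dexses} obtains for the very same comparison map, namely $Y\inter(Z\union G)=(Y\inter Z)\union(Y\inter G)$ (in the paper's letters, $F\inter(D\union G)=(F\inter D)\union(F\inter G)$). If both computations are right the two must be equivalent here, and indeed they are in a modular lattice by the classical fact that a triple generating a distributive sublattice of a modular lattice is detected by any single distributive equation; but you should either check your componentwise simplification against the paper's, or invoke that lattice-theoretic fact explicitly, since the symmetry of your identity in $Y$ and $Z$ is the entire punchline of your argument.
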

	\begin{proof}
		We use the notation of Theorem \ref{dexses}. Suppose that the map $\Gamma$ is a normal monomorphism. We want to show that $\Lambda\colon CD/AB\rightarrow GH/EF$ is also a normal monomorphism. $\Gamma$ represents the top rectangle of a di-extension, where the bottom left is both $(D\union F\union G)/(D\union F)$ and the cokernel of $\gamma$, \textit{i.e.}\ $(D\union G)/(D\union B\union E)=(D\union G)/(D\union(F\inter G))$. By \ref{cnsdiexact} and \ref{dexmod}, this is equal to
		$$
			(D\union(F\inter G)\union G)/(D\union(F\inter G))=G/(G\inter (D\union(F\inter G)))=G/((G\inter D)\union(F\inter G))
		$$
		Hence $D$ and $F$ have symmetric roles in this term, which means that it is equal to $(F\union G)/(F\union B\union C)$. Now consider the diagram below in which $\Lambda$ is the top rectangle
		\[
		\begin{tikzpicture}
				\node(A) at (0,0){$(B\union C)/B$};
				\node(B) at (4,0){$D/B$};
				\node(C) at (8,0){$D/(B\union C)$};
				\node(D) at (0,-2){$(F\union G)/F$};
				\node(E) at (4,-2){$H/F$};
				\node(F) at (8,-2){$H/(F\union G)$};
				\node(G) at (0,-4){$(D\union F\union G)/(D\union F)$};
				\node(H) at (4,-4){$H/(D\union F)$};
				\node(I) at (8,-4){$H/(D\union F\union G)$};
				\draw[\normalmono] (A)--(B) node[above,midway]{};
				\draw[\normalepi] (B)--(C) node[above,midway]{};
				\draw[\map] (A)--(D) node[left,midway]{$\lambda$};
				\draw[\map] (B)--(E) node[left,midway]{$\lambda'$};
				\draw[\map] (C)--(F) node[left,midway]{$\lambda''$};
				\draw[\normalmono] (D)--(E) node[below,midway]{};
				\draw[\normalepi] (E)--(F) node[below,midway]{};
				\draw[\map] (D)--(G) node[left,midway]{$p$};
				\draw[\normalepi] (E)--(H) node[left,midway]{};
				\draw[\normalepi] (F)--(I) node[left,midway]{};
				\draw[\normalmono] (G)--(H) node[below,midway]{};
				\draw[\normalepi] (H)--(I) node[below,midway]{};
			\end{tikzpicture}
		\]
		We have
		$$
			(B\union C)/B=C/(B\inter C)=C/(C\inter E)=(E\union C)/E
		$$
		and $(F\union G)/F=G/(F\inter G)=G/E$, so $\lambda$ is a normal monomorphism. Similarly, $\lambda'$ is also a normal monomorphism. Moreover, we just proved that $(D\union F\union G)/(D\union F)=(F\union G)/(F\union B\union C)$, which means that $p=\Coker(\lambda)$, so the left column is a short exact sequence. Hence, the top left-hand square is a pullback by \ref{pullbackreco}, so $\Lambda$ is a normal monomorphism.
	\end{proof}
	
	\begin{example}\label{exvect}
		Given a field $F$, the category $\mathsf{Vect}_F$ of $F$-vector spaces is di-exact, and the category $\ses(\mathsf{Vect}_F)$ is DPN but not di-exact.
	\end{example}
	\begin{proof}
		$\mathsf{Vect}_F$ is way more than di-exact, it is abelian. Hence, dinversion preserves normal maps in $\ses(\mathsf{Vect}_F)$ by \ref{dexsesdpn}. Furthermore, consider the $F$-vector space $F^2$; we have a diamond $M(F^2,G,H,K,0)$ (notation from \ref{thmlat}) in $\nsub(F^2)$ where $G$, $H$ and $K$ are three different one-dimensional subspaces of $F^2$. Hence $\nsub(F^2)$ is not distributive, and by \ref{dexses} the category $\ses(\mathsf{Vect}_F)$ is not di-exact. To see what happens in further details, consider the pullback of short exact sequences
		\[
			\begin{tikzpicture}
				\node(A) at (0,0){$(0,0)$};
				\node(B) at (2,0){$(H,0)$};
				\node(C) at (0,-2){$(K,0)$};
				\node(D) at (2,-2){$(F^2,G)$};
				\node(E) at (0.5,-0.5){$\pullback$};
				\draw[\normalmono] (A)--(B) node[above,midway]{};
				\draw[\normalmono] (C)--(D) node[above,midway]{};
				\draw[\normalmono] (A)--(C) node[left,midway]{};
				\draw[\normalmono] (B)--(D) node[left,midway]{};
			\end{tikzpicture}
		\]
		where $(V,W)$ denotes the short exact sequence $W\normono V\norepi V/W$. The map from $(K,0)/(0,0)$ to $(F^2,G)/(H,0)$ is
		\[
		\begin{tikzpicture}
			\node(A) at (0,0){$0$};
			\node(B) at (2.5,0){$K$};
			\node(C) at (5,0){$K$};
			\node(D) at (0,-2){$F^2/H$};
			\node(E) at (2.5,-2){$F^2/H$};
			\node(F) at (5,-2){$0$};
			\draw[\normalmono] (A)--(B) node[above,midway]{};
			\draw[double,double distance=1mm] (B)--(C) node[above,midway]{};
			\draw[\map] (A)--(D) node[left,midway]{};
			\draw[\map] (B)--(E) node[left,midway]{};
			\draw[\map] (C)--(F) node[left,midway]{};
			\draw[double,double distance=1mm] (D)--(E)	 node[below,midway]{};
			\draw[\normalepi] (E)--(F) node[below,midway]{};
		\end{tikzpicture}
		\]
		which is not a normal monomorphism since the left-hand square is not a pullback (\ref{kercokerses}).
	\end{proof}
	
	Let summarize what we know about di-exactness so far. Let $\X$ be a z-exact category. We write $\mathsf{DEx}(\X)$ for the category whose objects are di-extensions of $\X$ and whose morphisms are those $9$-tuples of morphisms of $\X$ which make everything commute. In general it is not true that $\mathsf{DEx}(\X)$ is equivalent to $\ses^2(\X)\coloneq\ses(\ses(\X))$; this is precisely the case when $\X$ is di-exact. Now define a tri-extension to be a ($3\times3\times3$)-diagram of which each line is a short exact sequence, and $\mathsf{TEx}(\X)$ to be the category of tri-extension in $\X$, and suppose that $\X$ is di-exact. It can be seen easily that $\mathsf{DEx}(\ses(\X))$ is equivalent to $\mathsf{TEx}(\X)$. However, it is not equivalent to $\ses(\mathsf{DEx}(\X))=\ses(\ses^2(\X))=\ses^3(\X)$, and this is again because we need precisely di-exactness of $\ses(\X)$, which happen by \ref{dexses} when the lattice of normal subobjects is distributive. Now we can wonder whether we need to add some conditions on the lattice of normal subobjects so that $\ses^n(\X)$ is equivalent to the category of $n$-fold extension on $\X$. The following proposition shows that distributivity in the lattice of normal subobjects remains when taking higher extensions.
	
	\begin{proposition}\label{dexsesn}
		Let $\X$ be a z-exact category. Then $\nsub(X)$ is distributive for every $X\in\X$ if and only if $\nsub(S)$ is distributive for every $S\in\ses(\X)$. When this happens, $\nsub(S)$ is distributive for every $S\in\ses^n(\X)$.
	\end{proposition}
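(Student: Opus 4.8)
The plan is to reduce the whole statement to one structural fact: for every short exact sequence $S$ in $\X$, say $A_0\normono B_0\norepi C_0$, the lattice $\nsub(S)$ computed inside the z-exact category $\ses(\X)$ is isomorphic to the lattice $\nsub(B_0)$ computed inside $\X$. Granting this, both directions of the first assertion are immediate. For ``only if'', every $S\in\ses(\X)$ has $\nsub(S)\cong\nsub(B_0)$ with $B_0\in\X$, so $\nsub(S)$ is distributive as soon as all $\nsub(-)$ in $\X$ are. For ``if'', given $X\in\X$ one applies the fact to $S_X\coloneq(X\normono X\norepi0)$, whose middle object is $X$, obtaining $\nsub(X)\cong\nsub(S_X)$, distributive by hypothesis.

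To establish the structural fact, I would read off the description of $\nsub(S)$ from Proposition \ref{kercokerses}. A morphism $(\alpha,\beta,\gamma)\colon T\to S$ of short exact sequences, with $T$ the sequence $A\normono B\norepi C$, is a normal monomorphism exactly when the left-hand square --- made of $\alpha$, $\beta$ and the two kernel maps --- is a pullback of normal monomorphisms. When this holds, $\beta\colon B\normono B_0$ exhibits $B$ as a normal subobject of $B_0$; the object $A$ is forced to be the pullback of $A_0\normono B_0$ along $\beta$, that is $A_0\inter B$ (a normal subobject of $A_0$ by Lemma \ref{pullbacknormono}); and $C$ is the cokernel $B/A$, since $T$ is a short exact sequence. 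Conversely, any $B\in\nsub(B_0)$ gives a normal subobject of $S$: set $A\coloneq A_0\inter B$ and $C\coloneq B/A$; the composite $B\normono B_0\norepi C_0$ kills $A$ (as $A$ maps into $A_0$), so induces a map $C\to C_0$, and the triple $(A\normono A_0,\ B\normono B_0,\ C\to C_0)$ is a morphism of short exact sequences whose left-hand square is a pullback of normal monomorphisms, hence a normal monomorphism by Proposition \ref{kercokerses}. Thus taking the middle component, $T\mapsto B$, is a bijection $\nsub(S)\to\nsub(B_0)$.

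It remains to check that this bijection is an order isomorphism; since an order isomorphism between lattices automatically preserves all existing meets and joins, the structural fact then follows. Monotonicity of $T\mapsto B$ is clear, because a morphism $T\to T'$ over $S$ restricts to a morphism of middle components over $B_0$. Conversely, suppose $B\leqslant B'$ in $\nsub(B_0)$, and let $A=A_0\inter B$, $A'=A_0\inter B'$, $C=B/A$, $C'=B'/A'$ be the associated kernels and quotients. Then $A\leqslant A'$, and the composite $B\normono B'\norepi C'$ kills $A$ (which maps into $A'=\Ker(B'\norepi C')$), hence factors through $B\norepi C$ as a map $C\to C'$; together with the monomorphisms $A\normono A'$ and $B\normono B'$ this produces a morphism $T\to T'$ over $S$, so $T\leqslant T'$.

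Finally, the iterated statement is an induction on $n$: the case $n=0$ is the hypothesis that $\nsub(X)$ is distributive for all $X\in\X$, and if $\nsub(S)$ is distributive for all $S\in\ses^{n}(\X)$, then, since $\ses^{n}(\X)$ is again z-exact (as noted right after Proposition \ref{kercokerses}), applying the ``only if'' implication already proved --- with $\ses^{n}(\X)$ in place of $\X$ --- yields that $\nsub(S)$ is distributive for all $S\in\ses^{n+1}(\X)$. The only delicate point is the bookkeeping of the second paragraph: one must separate the free part of a normal subobject of $S$ (the middle object) from the determined part (the kernel, as an intersection with $A_0$, and the quotient), and one must stay with \emph{normal} subobjects throughout, since an arbitrary monomorphism of short exact sequences need not have a pullback for its left-hand square --- Proposition \ref{kercokerses} is exactly what makes the dictionary work.
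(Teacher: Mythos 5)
Your proof is correct, and it takes a somewhat more structural route than the paper's. The paper never states the isomorphism $\nsub(S)\cong\nsub(B_0)$; instead, following the computation in the proof of Theorem \ref{dexses}, it takes three normal subobjects $X'X$, $Y'Y$, $Z'Z$ of a short exact sequence $VW$ and computes, via Lemma \ref{uniinter} and Proposition \ref{kercokerses}, explicit formulas for $(X'X\union Y'Y)\inter(X'X\union Z'Z)$ and $X'X\union(Y'Y\inter Z'Z)$: in each case the middle component is the corresponding lattice expression in $X,Y,Z\in\nsub(W)$ and the kernel component is its intersection with $V$, so comparing the two sides gives the equivalence. Your argument isolates the dictionary underlying that computation --- a normal subobject of $S$ is exactly a normal subobject $B$ of the middle object, with kernel $A_0\inter B$ and quotient $B/A$ determined --- and upgrades it to an order isomorphism, hence a lattice isomorphism $\nsub(S)\cong\nsub(B_0)$. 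This buys you several things the paper leaves implicit: the ``if'' direction is made explicit (the paper does not say which short exact sequence realizes a given $X$ as a middle object; your $X\normono X\norepi 0$ does it), and \emph{any} lattice-theoretic property transfers, so Proposition \ref{hsdsesn} (the modular analogue) and the $\ses^n$ statement come for free from the same isomorphism. The only delicate point, which you do address, is that the inverse assignment lands in normal subobjects of $S$: that $A_0\inter B\normono A_0$ and $A_0\inter B\normono B$ are normal monomorphisms is Lemma \ref{pullbacknormono}, and Proposition \ref{kercokerses} then certifies the resulting triple as a normal monomorphism of short exact sequences.
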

	\begin{proof}
		We need to find an explicit formulation of the distributivity of the lattice of normal subobjects of an object in $\ses(\X)$. We use the notations of the proof of \ref{dexses}. Let $VW$ be a short exact sequence, and $X'X$, $Y'Y$ and $Z'Z$ be three normal subobjects of $VW$. All eight objects are normal subobjects of $W$. To first compute $X'X\union Y'Y$, we have that the cokernel $C$ of $X'X\rightarrow VW/Y'Y$ is
		$$
			C\coloneq(V\union X\union Y)/(X\union Y)\normono W/(X\union Y)\norepi W/(V\union X\union Y)
		$$
		Hence $X'X\union Y'Y$, which is the kernel of $VW\norepi C$, is
		$$
			V\inter(X\union Y)\normono X\union Y\norepi (X\union Y)/(V\inter(X\union Y))
		$$
		To find it, we take for the middle term the kernel of $W\norepi W/X\union Y$, and for the left term the pullback of $V$ and $X\union Y$ above $W$, because of \ref{kercokerses}. We obtain similarly $X'X\union Z'Z$, and thus the expression of $(X'X\union Y'Y)\inter (X'X\union Z'Z)$ is
		$$
			V\inter(X\union Y)\inter(X\union Z)\normono (X\union Y)\inter(X\union Z)\norepi Q \label{(xy)(xz)}
		$$
		where $Q$ is the cokernel of the morphism on the left (we don't need it explicitly).
		Now $Y'Y\inter Z'Z$ is
		$$
			V\inter Y\inter Z\normono Y\inter Z\norepi (Y\inter Z)/(V\inter Y\inter Z)
		$$
		so we have for $X'X\union(Y'Y\inter Z'Z)$
		$$
			V\inter(X\union(Y\inter Z))\normono X\union (Y\inter Z)\norepi Q' \label{x(yz)}
		$$
		Comparing $(X'X\union Y'Y)\inter (X'X\union Z'Z)$ and $X'X\union(Y'Y\inter Z'Z)$, we conclude that distributivity holds in $\nsub(W)$ if and only if it holds in $\nsub(VW)$.
	\end{proof}
	
	Therefore, by Theorem \ref{dexses}, it suffices that $\X$ and $\ses(\X)$ are di-exact for $\ses^n(\X)$ to be di-exact for every $n$, and so the $n$-fold extensions on $\X$ are the objects of $\ses^n(\X)$.
	
	\subsection{Homological self-duality in the regular case}\label{hsdreg}
	
	Here we explore the links between homological self-duality and regular categories \cite{Barr-Grillet-vanOsdol}, of which we recall briefly the definition.
	
	\begin{definition}
		In a category $\X$, a \Def{regular epimorphism} is a morphism which is a coequalizer of some pair of parallel morphisms.
	\end{definition}
	
	Any regular epimorphism is an epimorphism.
	
	\begin{definition}
		A category $\X$ is called \Def{regular} whenever:
		\begin{itemize}
			\item $\X$ has all finite limits;
			\item every kernel pair has a coequalizer;
			\item every regular epimorphism is stable under pullback.
		\end{itemize}
	\end{definition}
	
	The following lemma is a characterization of homological self-duality in the context of a regular z-exact category.
	
	\begin{lemma}\label{hsdstablenepi}
		Suppose $\X$ is pointed, regular, and admits cokernels. Then the following are equivalent:
		\begin{enumerate}
			\item $\X$ is homologically self-dual;
			\item every pullback in $\X$ of a normal epimorphism along a normal monomorphism is a normal epimorphism.
		\end{enumerate}
	\end{lemma}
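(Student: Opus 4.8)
The plan is to work throughout with the \emph{Third Isomorphism Property} reformulation of homological self-duality recalled above: $\X$ is HSD if and only if for every totally normal sequence of monomorphisms $X\normono Y\normono Z$, the canonical map $Y/X\to Z/X$ is a normal monomorphism. I would prove the two implications separately, and note at the outset that (ii)$\implies$(i) will use only the z-exact structure, whereas regularity enters only in the converse.

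For (ii)$\implies$(i), I would start from a totally normal sequence $X\normono Y\normono Z$, so that $X$ is a normal subobject of $Z$, and consider the normal epimorphism $q=\Coker(X\normono Z)\colon Z\norepi Z/X$ together with the induced normal epimorphism $Z/X\norepi Z/Y$; let $k\colon K\normono Z/X$ be its kernel. By Lemma \ref{pullbacknoyau}, pulling $k$ back along $q$ produces the kernel of the composite $Z\xrightarrow{q}Z/X\norepi Z/Y$, which is $Y\normono Z$. This exhibits a pullback square in which $k$ is a normal monomorphism and $q$ a normal epimorphism, so by (ii) the remaining edge $Y\to K$ is a normal epimorphism. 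Composing it with the monomorphism $k$ identifies its kernel with $\Ker\bigl(q\circ(Y\normono Z)\bigr)$, which by Lemma \ref{pullbacknoyau} is the pullback $X\inter Y=X$ in $\nsub(Z)$ (since $X\leqslant Y$). Hence $K\cong\Coker(X\normono Y)=Y/X$, compatibly with the maps out of $Y$, and so the canonical map $Y/X\to Z/X$ is, up to this isomorphism, the normal monomorphism $k$; this is the Third Isomorphism Property, hence (i).

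For (i)$\implies$(ii), I would take a pullback square of a normal epimorphism $e\colon A\norepi B$ along a normal monomorphism $m\colon B'\normono B$, with induced maps $m'\colon A'\to A$ and $e'\colon A'\to B'$. By Lemma \ref{pullbacknormono}, $m'$ is a normal monomorphism, and by regularity $e'$ is a regular epimorphism. Computing $\Ker(e')$ as an iterated pullback gives $\Ker(e')=\Ker(e)=:K$, and the composite $K\normono A'\xrightarrow{m'}A$ is the normal monomorphism $\Ker(e)$, so $K\normono A'\normono A$ is totally normal. The Third Isomorphism Property then yields a normal monomorphism $A'/K\normono A/K$; since $e$ is a normal epimorphism we have $A/K\cong B$ via $e$, whence a monomorphism $j\colon A'/K\to B$ with $j\circ(A'\norepi A'/K)=e\circ m'=m\circ e'$. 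Thus $m\circ e'$ admits two factorizations into a regular epimorphism followed by a monomorphism, namely $A'\xrightarrow{e'}B'\xrightarrow{m}B$ and $A'\norepi A'/K\xrightarrow{j}B$ (the cokernel $A'\norepi A'/K$ being a coequalizer, hence a regular epimorphism). By uniqueness of such factorizations in a regular category, there is an isomorphism $B'\cong A'/K$ compatible with $e'$ and with $A'\norepi A'/K$; postcomposing a cokernel with an isomorphism is again a cokernel, so $e'$ is a normal epimorphism.

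I expect the main obstacle to be that the genuine content is upgrading a \emph{regular} epimorphism to a \emph{normal} one. In the direction (i)$\implies$(ii) the map $e'$ is automatically regular epic, and the work lies entirely in recognizing it as the cokernel of its kernel, which is exactly what the Third Isomorphism Property together with uniqueness of (regular epi, mono)-factorizations deliver; in the direction (ii)$\implies$(i) hypothesis (ii) supplies this normality directly, and one only has to identify the relevant kernels via Lemma \ref{pullbacknoyau} and verify the totally normal hypotheses. The routine but error-prone points to watch are the identifications $\Ker(e')=\Ker(e)$ and $\Ker\bigl(q\circ(Y\normono Z)\bigr)=X\inter Y$, and the compatibility of every comparison isomorphism with the canonical quotient maps.
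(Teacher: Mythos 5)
Your proof is correct and follows essentially the same route as the paper's: both directions hinge on the Third Isomorphism Property formulation of HSD, the kernel identifications via Lemmas \ref{pullbacknoyau}--\ref{pullbackreco}, and (for (i)$\implies$(ii)) pullback-stability of regular epimorphisms. The only cosmetic difference is that you invoke uniqueness of (regular epi, mono)-factorizations where the paper explicitly factors the pulled-back map through $\Coker(\Ker(e'))$ and checks that the comparison morphism is monic.
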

	\begin{proof}
		(ii)$\implies$(i) $\X$ is z-exact since it has all finite limits and all cokernels. Let $X\normono Y\normono Z$ be a totally normal sequence of monomorphisms. We have a morphism of short exact sequences
		\[
			\begin{tikzpicture}
				\node(A) at (0,0){$Y$};
				\node(B) at (2,0){$Z$};
				\node(C) at (0,-2){$K$};
				\node(D) at (2,-2){$Z/X$};
				\node(E) at (4,0){$Y/Z$};
				\node(F) at (4,-2){$Y/Z$};
				\draw[\normalmono] (A)--(B);
				\draw[\normalmono] (C)--(D);
				\draw[\map] (A)--(C) node[left,midway]{$\gamma$};
				\draw[\normalepi] (B)--(D);
				\draw[\normalepi] (B)--(E);
				\draw[\normalepi] (D)--(F);
				\draw[double,double distance=1mm] (E)--(F);
			\end{tikzpicture}
		\]
		From \ref{pullbackreco} we deduce that the left-hand square is a pullback. Thus $\gamma$ is a normal epimorphism, and so $Y\rightarrow Z/X$ is normal.
		
		(i)$\implies$(ii) We start with a commutative diagram where the bottom is a short exact sequence, the right-hand rectangle is a pullback and $k$ is the kernel of $\rho$.
		\[
			\begin{tikzpicture}
				\node(A) at (-2,0){$K$};
				\node(B) at (2,0){$P$};
				\node(C) at (-2,-2){$X$};
				\node(D) at (2,-2){$Y$};
				\node(E) at (6,0){$T$};
				\node(F) at (6,-2){$Y/X$};
				\node(G) at (4,-1){$P/K$};
				\draw[\normalmono] (A)--(B) node[above,midway]{$k$};
				\draw[\normalmono] (C)--(D) node[below,midway]{$\iota$};
				\draw[\map] (A)--(C) node[left,midway]{$\gamma$};
				\draw[\normalmono] (B)--(D) node[left,midway]{$p$};
				\draw[\map] (B)--(E) node[above,midway]{$\rho$};
				\draw[\normalepi] (D)--(F) node[below,midway]{$\pi$};
				\draw[\normalmono] (E)--(F) node[right,midway]{$f$};
				\draw[\map,dashed] (G)--(E) node[below,midway]{$\bar{\rho}$};
				\draw[\normalepi,dashed] (B)--(G) node[below left,midway]{$q$};
				\draw[\map,dashed] (G)--(F) node[below left,midway]{$\alpha$};
			\end{tikzpicture}
		\]
		By \ref{pullbacknormono}, $p$ is a normal monomorphism, and by \ref{pullbackiso}, $\gamma$ is an isomorphism. Moreover, since $\pi$ is a regular epimorphism, $\rho$ is also a regular epimorphism, and $\rho$ factors as $\rho=\bar{\rho}q$ where $q$ is the cokernel of $k$. $\bar{\rho}$ is a regular epimorphism, and it remains for us to show that it is a monomorphism. By homological self-duality, $\alpha\colon P/K\rightarrow Y/X$  is a normal monomorphism, and we have
		$$
			\alpha q=\pi p=f\rho=f\bar{\rho}q
		$$
		So $\alpha=f\bar{\rho}$ because $q$ is epic, and thus $\bar{\rho}$ is a monomorphism.
	\end{proof}
	
	\begin{proposition}\label{propnsub}
		Let $X\normono Y$ be a normal monomorphism in a z-exact category $\X$. We define $\Phi\colon \nsub(Y/X)\rightarrow\nsub(Y\vert X)$ (see notations at the beginning of \ref{subsec4.1}) for every $t\colon T\normono Y/X$ by the pullback of $t$ along $Y\norepi Y/X$, and $\Psi\colon \nsub(Y\vert X)\rightarrow\nsub(Y/X)$ for every $u\colon U\normono Y$ that contains $X$ by the kernel of $Y/X\rightarrow Y/U$. Then
		\begin{enumerate}
			\item $\Phi$ is a right adjoint of $\Psi$ and $\Phi\Psi=\id$;
			\item $\Phi$ preserves intersection, $\Psi$ preserves union;
			\item if $\X$ is homologically self-dual, then $\Psi$ is the functor $U\mapsto U/X$ and we have the formula
			\[
				(U\union V)/X=(U/X)\union(V/X)
			\]
			for all $U$, $V$ in $\nsub(Y\vert X)$;\label{3hsd}
			\item if $\X$ is homologically self-dual and regular, then (\ref{3hsd}) is true, $\Phi$ and $\Psi$ are isomorphism of lattices, and we have the formula
			\[
				(U\inter V)/X=(U/X)\inter (V/X)
			\]
		\end{enumerate}
	\end{proposition}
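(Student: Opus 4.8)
The plan is to handle the four items in turn, everything resting on the identification of $\Psi(U)$ with the \emph{normal image} of the composite $\bar u = (U\normono Y\norepi Y/X)$, that is, $\Psi(U)=\Ker(\Coker(\bar u))$. For (i), I would first check that $\Phi$ and $\Psi$ are order preserving: $\Phi$ because passing to pullbacks respects the order on normal subobjects, and $\Psi$ because an inclusion $U\leqslant U'$ yields a factorization $Y/X\to Y/U\to Y/U'$, whence $\Ker(Y/X\to Y/U)\leqslant\Ker(Y/X\to Y/U')$. Next I would prove $\Psi(U)=\Ker(\Coker(\bar u))$: writing $q\colon Y\norepi Y/X$, since $q$ is epic we have $\Coker(\bar u)=\Coker(qu)$, and a short diagram chase---using only that cokernel projections are epic---identifies $\Coker(qu)$ with $Y/U$ and the comparison $Y/X\to\Coker(\bar u)$ with the canonical $q'\colon Y/X\norepi Y/U$, so $\Ker(\Coker(\bar u))=\Ker(q')=\Psi(U)$. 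Using that $\Ker(\Coker(\sigma))=\sigma$ for a normal monomorphism $\sigma$, this makes $\Psi(U)$ the least normal subobject of $Y/X$ through which $\bar u$ factors. The adjunction is then immediate: by the universal property of the pullback defining $\Phi(T)$, one has $U\leqslant\Phi(T)$ iff $\bar u$ factors through $t\colon T\normono Y/X$, and by minimality this is equivalent to $\Psi(U)\leqslant T$. Finally $\Phi\Psi=\id$, because $\Phi(\Psi(U))$ is the pullback of $\Ker(q')$ along $q$, which is $\Ker(q'q)=\Ker(Y\norepi Y/U)=U$ by Lemma \ref{pullbacknoyau}.

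Item (ii) is then formal: $\Psi$ is a left adjoint and $\Phi$ a right adjoint between the posets $\nsub(Y\vert X)$ and $\nsub(Y/X)$, so $\Psi$ preserves unions (coproducts) and $\Phi$ intersections (products); one can also see the latter directly by composing pullbacks. For (iii), assume $\X$ is homologically self-dual and let $U\in\nsub(Y\vert X)$; then $X\normono U\normono Y$ is a totally normal sequence of monomorphisms, so the Third Isomorphism Property supplies a short exact sequence $U/X\normono Y/X\norepi Y/U$, whence the canonical map $U/X\to Y/X$ is exactly $\Ker(q')=\Psi(U)$; that is, $\Psi$ is the functor $U\mapsto U/X$. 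The formula $(U\union V)/X=(U/X)\union(V/X)$ is then item (ii) read through this description of $\Psi$, using that the union of two normal subobjects containing $X$ is the same whether computed in $\nsub(Y\vert X)$ or in $\nsub(Y)$.

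For (iv), assume in addition that $\X$ is regular; then (iii) still applies. By (i) we have $\Phi\Psi=\id$, so it suffices to prove $\Psi\Phi=\id$; together these say that $\Phi$ and $\Psi$ are mutually inverse order-isomorphisms between $\nsub(Y/X)$ and the sublattice $\nsub(Y\vert X)$ of $\nsub(Y)$, hence lattice isomorphisms, and $(U\inter V)/X=(U/X)\inter(V/X)$ then follows from preservation of meets together with $\Psi=(U\mapsto U/X)$. To obtain $\Psi\Phi=\id$, fix $t\colon T\normono Y/X$: in the pullback square defining $\Phi(T)=q^{-1}(T)$, the vertical $q^{-1}(T)\to T$ is the pullback of the normal epimorphism $q$ along the normal monomorphism $t$, hence a normal epimorphism by Lemma \ref{hsdstablenepi}, and its kernel is the kernel of $q^{-1}(T)\normono Y\norepi Y/X$, which equals $X$ since $X=\Phi(0)\leqslant\Phi(T)$ (by Lemma \ref{pullbacknoyau}, using that $t$ is monic). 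Hence $T\cong\Coker\bigl(X\normono\Phi(T)\bigr)=\Phi(T)/X=\Psi(\Phi(T))$ by (iii). The step I expect to be the crux is this last one: upgrading the counit inequality $\Psi\Phi(T)\leqslant T$ coming from (i) to an isomorphism requires knowing that the pullback of the quotient $q$ along $t$ is again a normal epimorphism, which is exactly where the regularity hypothesis enters, via Lemma \ref{hsdstablenepi}.
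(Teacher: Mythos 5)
Your proposal is correct and follows essentially the same route as the paper: the adjunction $\Psi\dashv\Phi$ with $\Phi\Psi=\id$ in (i), the formal consequence for (ii), the Third Isomorphism Property for (iii), and Lemma \ref{hsdstablenepi} to make $\Phi(T)\to T$ a normal epimorphism with kernel $X$ in (iv). The only (harmless) variation is in (i), where you derive the adjunction from the characterization of $\Psi(U)$ as $\Ker(\Coker(\bar u))$, the least normal subobject through which $\bar u$ factors, and get $\Phi\Psi=\id$ from Lemma \ref{pullbacknoyau}, whereas the paper chases the universal properties directly and invokes Lemma \ref{pullbackreco}.
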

	\begin{proof}
		\begin{enumerate}
			\item Let $U$ be in $\nsub(Y\vert X)$ and $T$ be in $\nsub(Y/X)$. Consider a morphism $f\colon U\rightarrow P(T)$. We have a commutative diagram
			\[
				\begin{tikzpicture}
					\node(A) at (0,0){$P(T)$};
					\node(B) at (2,0){$Y$};
					\node(C) at (0,-2){$T$};
					\node(D) at (2,-2){$Y/X$};
					\node(E) at (4,0){$Y/U$};
					\node(F) at (4,-2){$Q$};
					\node(G) at (-2,0){$U$};
					\node(H) at (0,-4){$R(U)$};
					\node(I) at (0.5,-0.5){$\pullback$};
					\draw[\normalmono] (A)--(B) node[above,midway]{};
					\draw[\normalmono] (C)--(D) node[below,midway]{$t$};
					\draw[\map] (A)--(C) node[left,midway]{};
					\draw[\normalepi] (B)--(D) node[left,midway]{};
					\draw[\normalepi] (B)--(E) node[above,midway]{};
					\draw[\normalmono] (G)--(A) node[above,midway]{$f$};
					\draw[\normalepi] (D)--(F) node[below,midway]{$q$};
					\draw[\normalmono] (H)--(D) node[below right,midway]{$m$};
					\draw[\normalepi] (D)--(E) node[left,midway]{};
					\draw[\map,dashed] (E)--(F) node[right,midway]{$\varphi$};
					\draw[\map,dashed] (H)--(C) node[left,midway]{};
				\end{tikzpicture}
			\]
			where $q$ is the cokernel of $t$. The universal property of $Y/U$ provides $\varphi$ because the map $U\rightarrow Q$ is $0$, so we observe that $qm=0$, and we obtain the adjunct of $f$.
			Now suppose there is a morphism $g\colon R(U)\rightarrow T$. This provides a morphism $U\rightarrow T$ such that $U\rightarrow T\rightarrow Y/X$ is equal to $U\rightarrow Y\rightarrow Y/X$, so by universal property we have the adjunct $U\rightarrow P(T)$.
			Finally the diagram below shows that $PR(U)=U$ because the two rows are short exact and the right map is an isomorphism, so the left square is a pullback (\ref{pullbackreco}).
			\[
				\begin{tikzpicture}
					\node(A) at (0,0){$U$};
					\node(B) at (2,0){$Y$};
					\node(C) at (0,-2){$R(U)$};
					\node(D) at (2,-2){$Y/X$};
					\node(E) at (4,0){$Y/U$};
					\node(F) at (4,-2){$Y/U$};
					\draw[\normalmono] (A)--(B) node[above,midway]{};
					\draw[\normalmono] (C)--(D) node[below,midway]{};
					\draw[\map] (A)--(C) node[left,midway]{};
					\draw[\normalepi] (B)--(D) node[left,midway]{};
					\draw[\normalepi] (B)--(E) node[above,midway]{};
					\draw[\normalepi] (D)--(F) node[below,midway]{};
					\draw[double,double distance=1mm] (E)--(F) node[left,midway]{};
				\end{tikzpicture}
			\]
			\item Is true because $\Phi$ is a right adjoint and intersection is product, and $\Psi$ is a left adjoint and union is coproduct.
			\item Holds by definition of homological self-duality, and the formula comes from the previous point.
			\item For $T$ in $\nsub(Y/X)$, the map $\Phi(T)\rightarrow T$ is a normal epimorphism because of \ref{hsdstablenepi}, and its kernel is $X$, so by homological self-duality, $T=\Psi(\Phi(T))$. Now $\Psi$ is an isomorphism of categories, so it preserves both products and coproducts, which are respectively intersection and union in the lattice structure of $\nsub(Y/X)$ and $\nsub(Y\vert X)$.\qedhere
		\end{enumerate}
	\end{proof}
	
	The following theorem is a kind of analog of Theorem \ref{dexses} where homological self-duality replaces di-exactness and modularity replaces distributivity. However, it is weaker because there is no equivalence any more and we need regularity. It helps, nonetheless, to understand why we need a lattice that is not modular in Example \ref{sescmonhsd}.
	
	\begin{theorem}\label{hsdses}
		Suppose $\X$ is homologically self-dual and regular. If $\nsub(X)$ is modular for every $X$ in $\X$, then $\ses(\X)$ is homologically self-dual.
	\end{theorem}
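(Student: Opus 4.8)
The plan is to reduce the statement, exactly in the spirit of the proof of Theorem~\ref{dexses}, to a single application of the modular law. Since $\X$ is z-exact, so is $\ses(\X)$, and by the Third Isomorphism Property characterization of homological self-duality it is enough to show: for every totally normal sequence of monomorphisms $AB\normono CD\normono EF$ in $\ses(\X)$ (writing $XY$ for the short exact sequence $X\normono Y\norepi Y/X$), the induced map $\Gamma\colon CD/AB\rightarrow EF/AB$ is a normal monomorphism.

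First I would unpack the data. Applying Proposition~\ref{kercokerses} to the normal monomorphisms $AB\normono CD$ and $CD\normono EF$, and to their composite $AB\normono EF$ (which is normal because the sequence is totally normal), shows that $A$, $B$, $C$, $D$, $E$ are all normal subobjects of $F$, with $B\leqslant D$, $C=D\inter E$ and $A=B\inter C=B\inter E$. Using Proposition~\ref{cokersquare} together with the Third Isomorphism Property in $\X$ (valid since $\X$ is homologically self-dual), I would then identify the short exact sequences $CD/AB$ and $EF/AB$ respectively as $(B\union C)/B\normono D/B\norepi D/(B\union C)$ and $(B\union E)/B\normono F/B\norepi F/(B\union E)$; consequently the middle component of $\Gamma$ is the canonical map $D/B\rightarrow F/B$, which is a normal monomorphism again by the Third Isomorphism Property applied to the totally normal sequence $B\normono D\normono F$.

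By Proposition~\ref{kercokerses}, $\Gamma$ is a normal monomorphism precisely when its left-hand square---the square whose horizontal edges are the kernel inclusions $(B\union C)/B\normono D/B$ and $(B\union E)/B\normono F/B$ and whose vertical edges are the first two components of $\Gamma$---is a pullback of normal monomorphisms; by Lemma~\ref{pullbacknormono} the vertical edges are then automatically normal, so the point is only the pullback condition. Here I would invoke the last part of Proposition~\ref{propnsub}: since $\X$ is homologically self-dual and regular, $\Psi\colon U\mapsto U/B$ is an isomorphism of lattices from $\nsub(F\vert B)$ onto $\nsub(F/B)$ preserving intersections, so that the intersection of $D/B$ and $(B\union E)/B$ in $\nsub(F/B)$ equals $\bigl(D\inter(B\union E)\bigr)/B$; hence the square is a pullback if and only if $B\union C=D\inter(B\union E)$ in $\nsub(F)$. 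Finally this equality is the modular law in $\nsub(F)$ with $B\leqslant D$ and $C=D\inter E$: $D\inter(B\union E)=B\union(D\inter E)=B\union C$. Thus $\Gamma$ is a normal monomorphism, the Third Isomorphism Property holds in $\ses(\X)$, and $\ses(\X)$ is homologically self-dual.

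The main obstacle is not conceptual but bookkeeping: one must carefully check that every subobject occurring is indeed normal---in particular that $B$, and hence $A$, is a normal subobject of $F$, which is exactly where totality of the sequence is used---and correctly identify the cokernels $CD/AB$ and $EF/AB$ via Proposition~\ref{cokersquare}. Once Proposition~\ref{propnsub} is available, the only genuinely lattice-theoretic input is the single instance of the modular law above; this also explains, as announced, why a non-modular lattice is forced in Example~\ref{sescmonhsd}.
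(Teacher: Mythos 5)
Your proposal is correct and follows essentially the same route as the paper's proof: reduce to the Third Isomorphism Property in $\ses(\X)$, identify the quotient short exact sequences explicitly, reduce normality of $\Gamma$ to the left-hand square being a pullback via Proposition~\ref{kercokerses}, and verify that pullback condition by combining Proposition~\ref{propnsub} with one instance of the modular law (the paper computes $X\union(Z'\inter Y)=(X\union Z')\inter Y$, which is the same identity you write as $D\inter(B\union E)=B\union(D\inter E)$). The only difference is notational, plus your slightly more explicit bookkeeping of where totality of the sequence is needed.
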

	\begin{proof}
		Consider a totally normal sequence of monomorphisms $X'X\normono Y'Y\normono Z'Z$ in $\ses(\X)$, which is equivalent to $\nmono(\X)$ (notations as in the proof of \ref{dexses}).
		\begin{equation}\label{nmonoseq}
			\begin{tikzpicture}[baseline=(I)]
				\node(A) at (0,0){$X'$};
				\node(B) at (2,0){$Y'$};
				\node(C) at (0,-2){$X$};
				\node(D) at (2,-2){$Y$};
				\node(E) at (4,0){$Z'$};
				\node(F) at (4,-2){$Z$};
				\node(G) at (0.5,-0.5){\pullback};
				\node(H) at (2.5,-0.5){\pullback};
				\node(I) at (0,-1.125){};
				\draw[\normalmono] (A)--(B) node[above,midway]{};
				\draw[\normalmono] (C)--(D) node[below,midway]{};
				\draw[\normalmono] (A)--(C) node[left,midway]{};
				\draw[\normalmono] (B)--(D) node[left,midway]{};
				\draw[\normalmono] (B)--(E) node[above,midway]{};
				\draw[\normalmono] (D)--(F) node[below,midway]{};
				\draw[\normalmono] (E)--(F) node[left,midway]{};
			\end{tikzpicture}
		\end{equation}
		To be able to understand the map $\Gamma\colon Y'Y/X'X\rightarrow Z'Z/X'X$, we study the diagram below where the three short exact sequences stand vertically on the left.
		\[
			\begin{tikzpicture}
				\node(A) at (-3,-0.5){$X'$};
				\node(B) at (0,0){$X$};
				\node(C) at (-1.5,-1.5){$Z'$};
				\node(D) at (-3,-3){$X$};
				\node(E) at (0,-2.5){$Y$};
				\node(F) at (-1.5,-4){$Z$};
				\node(G) at (-3,-5.5){$X/X'$};
				\node(H) at (0,-5){$Y/Y'$};
				\node(I) at (-1.5,-6.5){$Z/Z'$};
				\node(J) at (3,0){$(X\union Y')/X$};
				\node(K) at (1.5,-1.5){$(X\union Z')/X$};
				\node(L) at (3,-2.5){$Y/X$};
				\node(M) at (1.5,-4){$Z/X$};
				\node(N) at (3,-5){$Y/(X\union Y')$};
				\node(O) at (1.5,-6.5){$Z/(X\union Z')$};
				\node(P) at (0,-1.5){};
				\node(Q) at (-1.5,-2.75){};
				\node(R) at (1.5,-2.5){};
				\node(S) at (0,-4){};
				\node(T) at (-1.5,-5.25){};
				\node(U) at (1.5,-5){};
				\draw[\normalmono] (A)--(B);
				\draw[\normalmono] (A)--(C);
				\draw[\normalmono] (B)--(C);
				\draw[\normalmono] (A)--(D);
				\draw[\normaldemimono] (B)--(P);
				\draw[\map] (P)--(E);
				\draw[\normalmono] (C)--(F);
				\draw[\normaldemimono] (D)--(Q);
				\draw[\map] (Q)--(E);
				\draw[\normalmono] (D)--(F);
				\draw[\normalmono] (E)--(F);
				\draw[\normalepi] (D)--(G);
				\draw[-] (E)--(S);
				\draw[\normalepi] (S)--(H);
				\draw[\normalepi] (F)--(I);
				\draw[-] (G)--(T);
				\draw[\map] (T)--(H);
				\draw[\map] (G)--(I);
				\draw[\map] (H)--(I);
				\draw[\map] (B)--(J);
				\draw[\map] (C)--(K);
				\draw[\map] (J)--(K);
				\draw[-] (E)--(R);
				\draw[\normalepi] (R)--(L);
				\draw[\normalepi] (F)--(M);
				\draw[\normalmono] (L)--(M);
				\draw[-] (H)--(U);
				\draw[\normalepi] (U)--(N);
				\draw[\normalepi] (I)--(O);
				\draw[\map] (N)--(O);
				\draw[\normalmono] (J)--(L);
				\draw[\normalmono] (K)--(M);
				\draw[\normalepi] (L)--(N);
				\draw[\normalepi] (M)--(O);
			\end{tikzpicture}
		\]
		Here $\Gamma$ is on the right-hand side of the diagram. We have a normal monomorphism $Y/X\normono Z/X$ because $\X$ is homologically self-dual, and so we need the top square on the right-hand side to be a pullback. Since $Y'=Z'\inter Y$ and by modularity we have
		$$
			(X\union Y')/X=(X\union(Z'\inter Y))/X=((X\union Z')\inter Y)/X
		$$
		which is equal to $((X\union Z')/X)\inter (Y/X)$ because of \ref{propnsub}.
	\end{proof}
	
	\begin{remark}
		As for Remark \ref{remdexses}, modularity appears in Theorem \ref{hsdses} partly because the diagram \eqref{nmonoseq} consists of three normal subobjects $X$, $Y$, $Z'$ of the object $Z$ where $X$ is smaller than $Y$.
	\end{remark}
	
	The following proves that modularity holds in higher dimensions, as for distributivity in \ref{dexsesn}.
	
	\begin{proposition}\label{hsdsesn}
		Let $\X$ be a z-exact category. Then $\nsub(X)$ is modular for every $X\in\X$ if and only if $\nsub(S)$ is modular for every $S\in\ses(\X)$. When this happens, $\nsub(S)$ is modular for every $S\in\ses^n(\X)$.
	\end{proposition}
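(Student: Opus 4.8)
The plan is to follow the proof of Proposition~\ref{dexsesn} almost verbatim, tracking the modular law in place of the distributive one. Fix a short exact sequence $VW$ in $\X$. By \ref{kercokerses}, a normal monomorphism of short exact sequences is a pullback of normal monomorphisms, so every normal subobject of $VW$ has the form $A'A$ with $A'=V\inter A$, where $A$ --- and likewise $V$, $A'$ and everything else that will occur --- is a normal subobject of $W$. The computations of intersections and unions carried out in the proof of \ref{dexsesn} show that, for normal subobjects $A'A$ and $B'B$ of $VW$, the intersection $A'A\inter B'B$ is of this shape with middle term $A\inter B$ and left term $V\inter(A\inter B)$, while the union $A'A\union B'B$ has middle term $A\union B$ and left term $V\inter(A\union B)$. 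Together with the observation that $A'A\leqslant B'B$ in $\nsub(VW)$ exactly when $A\leqslant B$ in $\nsub(W)$, this says that the correspondence $A\mapsto(V\inter A)A$ is a bijection $\nsub(W)\to\nsub(VW)$ respecting order, join and meet, hence a lattice isomorphism; but I will keep the computation explicit, as in \ref{dexsesn}.

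Next I test modularity. Take normal subobjects $X'X$, $Y'Y$, $Z'Z$ of $VW$ with $X'X\leqslant Z'Z$, i.e.\ $X\leqslant Z$ in $\nsub(W)$. Applying the formulas above, $X'X\union(Y'Y\inter Z'Z)$ is the normal subobject of $VW$ with middle term $X\union(Y\inter Z)$ and left term $V\inter(X\union(Y\inter Z))$, whereas $(X'X\union Y'Y)\inter Z'Z$ has middle term $(X\union Y)\inter Z$ and left term $V\inter((X\union Y)\inter Z)$. These two normal subobjects of $VW$ coincide if and only if their middle terms coincide, that is, if and only if $X\union(Y\inter Z)=(X\union Y)\inter Z$; when this holds the left terms agree automatically, being images of equal objects under $V\inter(-)$. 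Since $A\mapsto(V\inter A)A$ is an order-preserving bijection, every instance of the modular law in $\nsub(W)$ is realised in $\nsub(VW)$, so $\nsub(VW)$ is modular if and only if $\nsub(W)$ is modular.

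This yields the stated equivalence: if $\nsub(X)$ is modular for every $X\in\X$, then $\nsub(S)$ is modular for every $S=VW\in\ses(\X)$; conversely $\nsub(0W)\cong\nsub(W)$ and $0W\in\ses(\X)$, so modularity of $\nsub(S)$ for all $S\in\ses(\X)$ forces modularity of $\nsub(W)$ for every $W\in\X$. For the last assertion I argue by induction on $n$: $\ses(\X)$ is again z-exact, $\ses^{n}(\X)=\ses(\ses^{n-1}(\X))$, and the equivalence just proved transports modularity of all the lattices $\nsub(-)$ from $\ses^{n-1}(\X)$ up to $\ses^{n}(\X)$.

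Since the whole computation is borrowed from \ref{dexsesn}, I expect no real obstacle; the only points needing a moment's care are that the hypothesis $X'X\leqslant Z'Z$ translates precisely into $X\leqslant Z$ (so that the modular law is tested on the correct triples, which is the one place where the argument genuinely departs from \ref{dexsesn}), and that the subobject components of the two sides match as soon as the middle terms do.
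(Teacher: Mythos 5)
Your proof is correct and follows the paper's argument essentially verbatim: both transfer the meet/join formulas established in the proof of Proposition~\ref{dexsesn} to express intersections and unions in $\nsub(VW)$ through those in $\nsub(W)$, and then read off that modularity holds in one lattice if and only if it holds in the other. Your additional packaging of this as the lattice isomorphism $A\mapsto(V\inter A)A$, together with the explicit converse via $0W$ and the induction for $\ses^n(\X)$, only spells out what the paper leaves implicit.
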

	\begin{proof}
		As in the proof of \ref{dexsesn}, we need to understand how modularity is expressed in $\ses(\X)$. Let $VW$ be a short exact sequence, and $X'X$, $Y'Y$ and $Z'Z$ be three normal subobjects of $VW$ such that $X'X$ is smaller than $Y'Y$. With the same computation as in the proof of \ref{dexsesn}, $(X'X\union Z'Z)\inter Y'Y$ is the short exact sequence
		$$
			V\inter(X\union Z)\inter Y\normono(X\union Z)\inter Y\norepi Q
		$$
		for some $Q$ and $X'X\union (Z'Z\inter Y'Y)$ is the short exact sequence
		$$
			V\inter(X\union (Z\inter Y))\normono X\union (Z\inter Y)\norepi Q'
		$$
		for some $Q'$. Therefore, modularity holds in $\nsub(W)$ if and only if it holds in $\nsub(VW)$.
	\end{proof}
	
	Proposition \ref{hsdsesn} and Theorem \ref{hsdses} together show that if $\X$ is a regular and homologically self-dual category in which the lattices of normal subobjects are all modular, then $\ses^n(\X)$ is homologically self-dual for every $n$.
	
	\section{Conclusion}
	
	In \cite{PVdL3}, there is the following lemma:
	
	\begin{lemma}\label{monoreg}\cite[Lemma~2.5.12]{PVdL3}
		Let $\X$ be a category which is both regular and z-exact. For any totally normal sequence of monomorphisms $X\normono Y\normono Z$, if the map $Y/X\rightarrow Z/X$ is a monomorphism, then it is a normal monomorphism.\noproof
	\end{lemma}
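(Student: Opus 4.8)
The plan is to exhibit the canonical map $n\colon Y/X\to Z/X$ as (a representative of) the kernel of the induced normal epimorphism $\pi\colon Z/X\to Z/Y$, by transporting the relevant square along the cokernel $q_2\colon Z\norepi Z/X$ and using stability of regular epimorphisms under pullback. Write $m\colon Y\normono Z$ for the middle monomorphism, $q_1\colon Y\norepi Y/X$, $q_2\colon Z\norepi Z/X$, $p\colon Z\norepi Z/Y$ for the three cokernels, and let $n\colon Y/X\to Z/X$, $\pi\colon Z/X\to Z/Y$ be the induced maps, so that $nq_1=q_2m$ and $\pi q_2=p$. Since $p=\pi q_2$ is a normal epimorphism and $q_2$ is epic, Lemma~\ref{normonocomp} shows that $\pi$ is a normal epimorphism; and $\pi nq_1=pm=0$ together with $q_1$ epic gives $\pi n=0$. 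Hence $n$ factors as $n=kh$, where $k\colon K\normono Z/X$ is the kernel of $\pi$ and $h\colon Y/X\to K$; as $n$ is assumed monic, $h$ is monic too.

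The core of the argument is the computation of the pullback of $k$ along $q_2$. By Lemma~\ref{pullbacknoyau} this pullback is $\Ker(\pi q_2)=\Ker(p)$, which is the normal monomorphism $m\colon Y\normono Z$ itself; and the remaining leg $Y\to K$ of the resulting pullback square must equal $hq_1$, since $k(hq_1)=nq_1=q_2m$ and $k$ is monic. As $q_2$ is a normal, hence regular, epimorphism and $\X$ is regular, this leg $hq_1\colon Y\to K$ is again a regular epimorphism. Now $q_1$ is also a regular epimorphism, and $h$ is monic, so $h$ is an isomorphism: the kernel pairs of $q_1$ and of $hq_1$ coincide (because $h$ is monic), hence $q_1$ and $hq_1$ are coequalizers of one and the same parallel pair, with $h$ the mediating morphism; equivalently, $hq_1$ being a regular epimorphism forces $h$ to be a regular epimorphism, and a monomorphism which is a regular epimorphism is an isomorphism. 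Therefore $n=kh$ represents the same normal subobject of $Z/X$ as $k=\Ker(\pi)$, so $n$ is a normal monomorphism.

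The step I expect to require the most care is the identification in the second paragraph: one must check that the pullback of $\Ker(\pi)$ along $q_2$ is \emph{precisely} the square whose remaining leg is $hq_1$, so that the regular epimorphism delivered by pullback-stability is literally that leg rather than some a priori unrelated map. The rest is formal, and in particular homological self-duality is never invoked — only that $\X$ is regular and z-exact.
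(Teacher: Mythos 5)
The paper does not actually prove this lemma: it is imported from \cite[Lemma~2.5.12]{PVdL3} and stated with no proof, so there is no in-paper argument to compare yours against. On its own terms, your proof is correct. The factorization $n=kh$ through $k=\Ker(\pi)$ is legitimate ($\pi n q_1=pm=0$ and $q_1$ is epic), and the key identification checks out: by Lemma~\ref{pullbacknoyau}, the pullback of $k$ along $q_2$ is $\Ker(\pi q_2)=\Ker(p)=m$ (using that $Y\normono Z$ is normal, so $\Ker(\Coker(m))=m$), and the second leg of that pullback square is forced to be $hq_1$ because $k$ is monic. Pullback-stability of regular epimorphisms then makes $hq_1$ a regular epimorphism, and either of your two closing arguments is valid in a regular category: the kernel pairs of $q_1$ and $hq_1$ agree since $h$ is monic, so both are coequalizers of the same pair and $h$ is the mediating isomorphism (equivalently, regular $=$ strong epimorphisms in a regular category, strong epimorphisms cancel on the right, and a monic regular epimorphism is invertible). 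Your closing observation is also consistent with how the lemma is used in the paper: the argument needs only regularity and z-exactness, not homological self-duality, which is exactly why the paper can invoke this lemma to conclude that $\ses(\cmon)$ is not regular from the failure of normality of a monic comparison map in Example~\ref{sescmonhsd}.
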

	
	In our example of a z-exact category which is not HSD (which is $\ses(\cmon)$, see \ref{sescmonhsd}), this precise map denoted by $\gamma$ is not a normal monomorphism but is nonetheless monic. Thus, Lemma \ref{monoreg} implies that $\ses(\cmon)$ is not regular. More importantly, we could wonder if every regular and z-exact category, and in particular every pointed variety, is homologically self-dual. Actually this is not the case: the category $\mon$ of monoids is a counterexample. This has been proven by Mariano Messora.
	
	Besides, Proposition \ref{dexsesn} points out the fact that di-exactness can be utilized in higher extensions only in some very specific contexts. Namely, when the lattices of normal subobjects are distributive, which excludes abelian categories. The distributivity of the lattice of congruences has already been thoroughly examined (see for example \cite{Pedicchio2,Bourn2001b,Tomasthesis,EGJVdL}), but note that the congruences and the normal subobjects correspond only in ideal determined categories \cite{Janelidze-Marki-Tholen-Ursini}.
	
	Another potential field of investigation is the characterization of modularity and distributivity in the lattice of normal subobjects in terms of how an antinormal map can generate a di-extension. It doesn't look like the converse of Theorem \ref{dexmod} is true, but maybe something weaker than di-exactness could fit.
	
	\section*{Acknowledgments}
	
	Many thanks to Tim Van der Linden for suggesting us to work on this subject, and for all his helpful ideas and advice. We are also grateful to Mariano Messora for his example of a pointed variety which is not homologically self-dual.
	

\begin{thebibliography}{10}
		
		\bibitem{Barr-Grillet-vanOsdol}
		M.~Barr, P.~A. Grillet, and D.~H. van Osdol, \emph{Exact categories and
			categories of sheaves}, Lecture Notes in Math., vol. 236, Springer, 1971.
		
		\bibitem{Birkhoff}
		G.~Birkhoff, \emph{Lattice theory}, Amer. Math. Soc., 1973.
		
		\bibitem{Borceux-Bourn}
		F.~Borceux and D.~Bourn, \emph{Mal'cev, protomodular, homological and
			semi-abelian categories}, Math. Appl., vol. 566, Kluwer Acad. Publ., 2004.
		
		\bibitem{Bourn2001b}
		D.~Bourn, \emph{A categorical genealogy for the congruence distributive
			property}, Theory Appl. Categ. \textbf{8} (2001), no.~14, 391--407.
		
		\bibitem{Tomasthesis}
		T.~Everaert, \emph{An approach to non-abelian homology based on {C}ategorical
			{G}alois {T}heory}, Ph.D. thesis, Vrije Universiteit Brussel, 2007.
		
		\bibitem{EGJVdL}
		T.~Everaert, J.~Goedecke, T.~Janelidze-Gray, and T.~Van~der Linden,
		\emph{Relative {Mal'tsev} categories}, Theory Appl.~Categ. \textbf{28}
		(2013), no.~29, 1002--1021.
		
		\bibitem{Grandis-HA2}
		M.~Grandis, \emph{Homological algebra, in strongly non-abelian settings}, World
		Scientific Publishing, Singapore, 2013.
		
		\bibitem{Gratzer}
		G~Gratzer, \emph{Lattice theory: Foundation}, Birkhäuser, 2010.
		
		\bibitem{Janelidze-Marki-Tholen}
		G.~Janelidze, L.~M{\'a}rki, and W.~Tholen, \emph{Semi-abelian categories},
		J.~Pure Appl. Algebra \textbf{168} (2002), no.~2--3, 367--386.
		
		\bibitem{Janelidze-Marki-Tholen-Ursini}
		G.~Janelidze, L.~M{\'a}rki, W.~Tholen, and A.~Ursini, \emph{Ideal determined
			categories}, Cah. Topol. G\a'eom. Diff\a'er. Cat\a'eg. \textbf{LI} (2010),
		no.~2, 115--125.
		
		\bibitem{Pedicchio2}
		M.~C. Pedicchio, \emph{Arithmetical categories and commutator theory}, Appl.
		Categ. Structures \textbf{4} (1996), no.~2--3, 297--305.
		
		\bibitem{PVdL3}
		G.~Peschke and T.~Van~der Linden, \emph{A homological view of categorical algebra}, \texttt{arXiv:2404.15896v2}, 2024.
		
		\bibitem{Pin}
		J.E. Pin, \emph{Characterize kernels of monoid homomorphisms},
		\url{https://math.stackexchange.com/q/1592859}, 2015.
		
	\end{thebibliography}
	
	\providecommand{\noopsort}[1]{}
	\providecommand{\bysame}{\leavevmode\hbox to3em{\hrulefill}\thinspace}
	\providecommand{\MR}{\relax\ifhmode\unskip\space\fi MR }
	\providecommand{\MRhref}[2]{%
		\href{http://www.ams.org/mathscinet-getitem?mr=#1}{#2}
	}
	\providecommand{\href}[2]{#2}

\end{document}